\tikzset{
  commutative diagrams/.cd, 
  arrow style=tikz, 
  diagrams={>=stealth}
}
\def\@tocline#1#2#3#4#5#6#7{\relax
  \ifnum #1>\c@tocdepth 
  \else
    \par \addpenalty\@secpenalty\addvspace{#2}%
    \begingroup \hyphenpenalty\@M
    \@ifempty{#4}{%
      \@tempdima\csname r@tocindent\number#1\endcsname\relax
    }{%
      \@tempdima#4\relax
    }%
    \parindent\z@ \leftskip#3\relax \advance\leftskip\@tempdima\relax
    \rightskip\@pnumwidth plus4em \parfillskip-\@pnumwidth
    #5\leavevmode\hskip-\@tempdima
      \ifcase #1
       \or\or \hskip 1em \or \hskip 2em \else \hskip 3em \fi%
      #6\nobreak\relax
    \dotfill\hbox to\@pnumwidth{\@tocpagenum{#7}}\par
    \nobreak
    \endgroup
  \fi}
\newcounter{marginnote}
\DeclareMathAlphabet{\mathpzc}{OT1}{pzc}{m}{it}
\newtheorem{theorem}{Theorem}[section]
\newtheorem{corollary}[theorem]{Corollary}
\newtheorem{lemma}[theorem]{Lemma}
\newtheorem{proposition}[theorem]{Proposition}
\newtheorem{quasi-theorem}[theorem]{Quasi-Theorem}
\theoremstyle{definition}
\newtheorem{definition}[theorem]{Definition}
\newtheorem{remark}[theorem]{Remark}
\newtheorem{example}[theorem]{Example}
\newtheorem{blank remark}[theorem]{}
\newtheorem{not1}[theorem]{Notation}
\newcommand{\NN} {{\mathbb N}}		
\newcommand{\PP}{\mathbb{P}}         
\newcommand{\RR} {{\mathbb R}}		
\newcommand{\ZZ} {{\mathbb Z}}
\def\setminus{\smallsetminus}
\DeclareMathOperator{\Aut}{Aut}
\DeclareMathOperator{\val}{val}
\DeclareMathOperator{\mult}{mult}
\DeclareMathOperator{\Br}{Br}
\newcommand{\cal}{\mathcal}
\def\cM{{\cal M}}
\newcommand{\Mbar}{\overline{\cM}}
\def\trop{\mathrm{trop}}
\newcommand{\lightening}{{\mathsf{log}}}
\newcommand{\leak}{\mathsf{DR}_g^{\trop}(\mathbf x)}
\title[Intersection numbers of logDR]{$k$-leaky double Hurwitz descendants}
\author{Renzo Cavalieri} 
\address{Colorado State University, Department of Mathematics, Weber Building, Fort Collins, CO 80523-1874, USA}
\email{renzo@math.colostate.edu}
\author {Hannah Markwig}
\address {Universit\"at T\"ubingen, Fachbereich Mathematik, Auf der Morgenstelle 10, 72076 T\"ubingen, Germany }
\email {hannah@math.uni-tuebingen.de}
\author{Johannes Schmitt}
\address{Department Mathematik, R\"amistrasse 101, CH-8092 Z\"urich, Switzerland}
\email{johannes.schmitt@math.ethz.ch}
\DeclareMathOperator{\logDR}{logDR}
\DeclareMathOperator{\DR}{DR}
\DeclareMathOperator{\CH}{CH}
\DeclareMathOperator{\logCH}{logCH}
\DeclareMathOperator{\br}{br}
\def\cM{{\mathcal{M}}}
\def\Mbar{\overline{\mathcal{M}}}
\def\bfx{\mathbf{x}}
\def\bfe{\mathbf{e}}
\def\PP{\mathbb{P}}
\begin{document}

\begin{abstract}
We define a new class of enumerative invariants called $k$-leaky double Hurwitz descendants, generalizing both descendant integrals of double ramification cycles and the $k$-leaky double Hurwitz numbers introduced in \cite{CMR22}. These numbers are defined as intersection numbers of the logarithmic DR cycle against $\psi$-classes and logarithmic classes coming from piecewise polynomials encoding fixed branch point conditions. We give a tropical graph sum formula for these new invariants, allowing us to show their piecewise polynomiality and a wall-crossing formula in genus zero. We also prove that in genus zero the invariants are always non-negative and give a complete classification of the cases where they vanish. 

\end{abstract}

\maketitle

\tableofcontents
\section{Introduction}

The goal of this paper is to study enumerative invariants of the (logarithmic) double ramification cycles, giving an overview of the existing landscape of results and defining new invariants using techniques from logarithmic and tropical geometry.

\subsection{The double ramification cycle and its intersection numbers}
Inside the moduli space $\cM_{g,n}$ of smooth curves, the \emph{double ramification locus} is a closed substack cut out by an equality of line bundles
\begin{equation} \label{eqn:DRL}
    \mathrm{DRL}_g(\bfx) = \left\{(C, p_1, \ldots, p_n) : (\omega_C^{\mathrm{log}})^{\otimes k} \cong \mathcal{O}_C\left(\sum_{i=1}^n x_i p_i\right)\right\} \subseteq \cM_{g,n}\,,
\end{equation}
where $\mathbf{x}=(x_1, \ldots, x_n) \in \mathbb{Z}^n$ is an integer vector with $|\bfx| = \sum_i x_i = k(2g-2+n)$.
The (virtual) fundamental class of $\mathrm{DRL}_g(\mathbf{x})$ admits a natural extension
\begin{equation} \label{eqn:DR}
    \DR_g(\bfx) \in \CH_{2g-3+n}(\Mbar_{g,n}),
\end{equation}
in the Chow group of dimension $2g-3+n$ cycles on $\Mbar_{g,n}$, called the \emph{double-ramification cycle (class)}.\footnote{Since $k$ can be uniquely reconstructed from $g,n$ and $\bfx$, we omit it from the notation. }
For $k=0$, this class can be defined as the pushforward of the virtual fundamental class of the space $\Mbar^{\sim}_{g,\bfx}$  of stable maps to
{\em rubber} \cite{Graber2005Relative-virtua, Li2001Stable-morphism, Li2002A-degeneration-}. This is a compactification of the space of maps
\begin{equation} \label{eqn:DRcover}
f: (C,p_1, \ldots, p_n) \to \mathbb{P}^1\text{ with }f^*([0]-[\infty]) = \sum_{i=1}^n x_i p_i,
\end{equation}
with ramification profile over $0,\infty$ given by the positive and negative parts of $\bfx$ (and taken modulo the 
$\mathbb{C}^*$-action on $\PP^1$). For the history and general properties of the double ramification cycle we refer the reader to \cite{JPPZ}.

An established approach to extract intersection numbers from the cycle $\DR_g(\bfx)$ is to calculate its \emph{descendant invariants}
\begin{equation} \label{eqn:DRdescendants}
    \int_{\Mbar_{g,n}} \DR_g(\bfx) \cdot \psi_1^{e_1} \cdots \psi_n^{e_n},
\end{equation}
for a vector $\bfe = (e_1, \ldots, e_n) \in (\mathbb{Z}_{\geq 0})^n$ of exponents with $|\bfe| = 2g-3+n$. An explicit formula for these numbers was calculated in \cite{BSSZ} for $k=0$ and in \cite{CSS} for  arbitrary $k$,  but with $\bfe$ restricted to vectors with only a single nonzero entry. A separate direction of study is the \emph{double ramification hierarchy}, which studies intersection numbers of $\DR_g(\bfx)$ against Hodge classes, a (partial) cohomological field theory and $\psi$-classes as above (see \cite{Buryak2015Double-ramifica, Buryak2021Quadratic}).

There are natural enumerative questions concerning the double ramification geometry which have no known expression via intersection numbers of $\DR_g(\bfx)$ on $\Mbar_{g,n}$. A prominent example are the \emph{double Hurwitz numbers} $\mathrm{H}_g(\bfx)$ \cite{Goulden2005Towards-the-geo}. These count covers $f$ as in \eqref{eqn:DRcover} with $b=2g-3+n$ simple branch points at fixed positions in $\mathbb{P}^1$. The double Hurwitz numbers \emph{can} be defined by intersection theory: the space $\Mbar^{\sim}_{g,\bfx}$ of rubber maps has a natural branch morphism
\begin{equation} \label{eqn:brmorphism}
    \mathrm{br}: \Mbar^{\sim}_{g,\bfx} \to [\mathrm{LM}(b)/S_b]
\end{equation}
to a (stack quotient of a) \emph{Losev-Manin space}, which remembers the position of the simple branch points and allows them to coincide away from $0, \infty$ \cite{cm:geomperspdhn}. Then we have
\begin{equation} \label{eqn:doubleHurwitzviabr}
    \mathrm{H}_g(\bfx) = \int_{[\Mbar^{\sim}_{g,\bfx}]^\mathrm{vir}} \mathrm{br}^*[\mathrm{pt}].
\end{equation}
Since the branch morphism \eqref{eqn:brmorphism} does not factor through the forgetful morphism $F : \Mbar^{\sim}_{g,\bfx} \to \Mbar_{g,n}$, there is no obvious way to express the intersection number \eqref{eqn:doubleHurwitzviabr} as a product of $\DR_g(\bfx) = F_* [\Mbar^{\sim}_{g,\bfx}]^\mathrm{vir}$ with a class on $\Mbar_{g,n}$.

\subsection{Logarithmic double ramification cycles and double Hurwitz numbers}
The failure of $\Mbar_{g,n}$ in supporting a class that restricts to $\mathrm{br}^* [\mathrm{pt}]$ is part of a broader picture that emerged in recent years, and which can be summarized as follows:
\begin{align*}
    \textit{$\Mbar_{g,n}$ is not the right ambient space for the double ramification cycle.}
\end{align*}
The first hint of this appeared in a construction of $\DR_g(\bfx)$ 
\cite{Holmes2017Extending-the-d,MW17}, which proceeds by constructing a cycle $\widehat{\DR}_g(\bfx)$ on a \emph{log blowup}\footnote{See e.g. \cite[Section 2]{Barrott2019Logarithmic-Cho} for a definition of log blowups. In the context of our paper, the reader can think of lob blowups of $\Mbar_{g,n}$ as iterated blowups at smooth boundary strata and their strict transforms.} $\widehat{\cal M}^\bfx_{g,n}$ of $\Mbar_{g,n}$ and then obtains $\DR_g(\bfx)$ as the pushforward of $\widehat{\DR}_g(\bfx)$ under the map $\widehat{\cal M}^\bfx_{g,n} \to \Mbar_{g,n}$. This construction works for arbitrary $k \in \mathbb{Z}$  (compared to the one via stable maps to rubber when $k=0$) and gives a natural lift of $\DR_g(\bfx)$ to a {log blowup} of $\Mbar_{g,n}$. While the blowup $\widehat{\cal M}^\bfx_{g,n}$ is not unique, 
the resulting cycle
stabilizes on sufficiently fine blowups: given another (sufficiently fine) choice of blowup $\widetilde{\cal M}^\bfx_{g,n}$, the cycle $\widetilde{\DR}_g(\bfx)$ constructed there pulls back to the same cycle as $\widehat{\DR}_g(\bfx)$ on any blowup dominating $\widehat{\cal M}^\bfx_{g,n}$ and $\widetilde{\cal M}^\bfx_{g,n}$. Thus one obtains a well-defined element in the \emph{logarithmic Chow ring}
\begin{equation} \label{eqn:logCH}
    \logCH^*(\Mbar_{g,n}) = \varinjlim_{\widehat{\mathcal{M}} \to \Mbar_{g,n}} \CH^*(\widehat{\mathcal{M}}),
\end{equation}
defined as the direct limit of Chow rings of smooth log-blowups $\widehat{\mathcal{M}} \to \Mbar_{g,n}$, with 
maps given by pullback. The constructed lift
\begin{equation} \label{eqn:logDR}
    \logDR_g(\bfx) = [\widehat{\cal M}^\bfx_{g,n}, \widehat{\DR}_g(\bfx)] \in \logCH^*(\Mbar_{g,n})
\end{equation}
is called the \emph{logarithmic double-ramification cycle}.

 One immediate advantage of $\logDR_g(\bfx)$ compared to $\DR_g(\bfx)$ is that we can  recover the double Hurwitz numbers from $\logDR_g(\bfx)$. Indeed, as shown in \cite[Theorem A]{CMR22} there exists a cycle $\mathrm{Br}_g(\bfx) \in \logCH^{2g-3+n}(\Mbar_{g.n})$ such that
\begin{equation} \label{eqn:doubleHurwitzlogDR}
    \mathrm{H}_g(\bfx) = \int_{\Mbar_{g,n}} \logDR_g(\bfx) \cdot \Br_g(\bfx).
\end{equation}
To see where the cycle $\Br_g(\bfx)$ comes from, recall from equation \eqref{eqn:doubleHurwitzviabr} that we want to pair the virtual fundamental class of $\Mbar^{\sim}_{g,\bfx}$ with $\br^*[\mathrm{pt}]$. For a suitable choice of blowup $\widehat{\mathcal{M}}^\bfx_{g,n} \to \Mbar_{g,n}$ one can ensure that the map $\Mbar^{\sim}_{g,\bfx} \to \Mbar_{g,n}$ factors via an embedding $\iota$ into $\widehat{\mathcal{M}}^\bfx_{g,n}$ as illustrated on the left-hand side of Figure \ref{fig:rubfactorization} (see Proposition \ref{pro:tmap}). Moreover, the pushforward of the virtual class of $\Mbar^{\sim}_{g,\bfx}$ under $\iota$ gives the lifted double ramification cycle $\widehat{\DR}_{g}(\bfx)$ (see e.g. \cite[Proposition 7.1]{Holmes2017Extending-the-d}).

While the map $\br$ does not immediately factor through $\widehat{\mathcal{M}}^\bfx_{g,n}$, we can nevertheless get a proxy for the cycle $\br^*[\mathrm{pt}]$ using an auxiliary space: the stack $\mathsf{Ex}$  of \emph{expansions} parameterizes chains of rational curves with marked points $0,\infty$ at the opposite ends of the chain (see Figure \ref{fig:ex},  and \cite[Section 2.8]{CMR22} for a discussion).
\begin{figure}[tb]
    \centering
    \begin{tikzcd}
     &[-2em] \mathrm{br}^*{[\mathrm{pt}]}  & {[\mathrm{pt}]} \arrow[l, mapsto, "\mathrm{br}^*", swap] & {[T_b]} \arrow[l, mapsto, "F_b^*", swap] \\[-1.5em]
    {[\Mbar^{\sim}_{g,\bfx}]^\mathrm{vir}} \arrow[d, mapsto, "\iota_*",swap] & \Mbar^{\sim}_{g,\bfx} \arrow[d, "\iota", swap] \arrow[r,"\mathrm{br}"] & {[\mathrm{LM}(b)/S_b]} \arrow[r, "F_b"] & \mathsf{Ex}\\
    \widehat{\DR}_{g}(\bfx) \arrow[d, mapsto, "q_*", swap] & \widehat{\mathcal{M}}^\bfx_{g,n} \arrow[d, "q", swap] \arrow[rru, bend right=10,"\mathsf{t}"] & &\\
    \DR_g(\bfx) & \Mbar_{g,n}
    \end{tikzcd}
    \caption{Factoring the branch morphism to the stack $\mathsf{Ex}$ of expansions through the log blowup $\widehat{\mathcal{M}}^\bfx_{g,n}$}
    \label{fig:rubfactorization}
\end{figure}

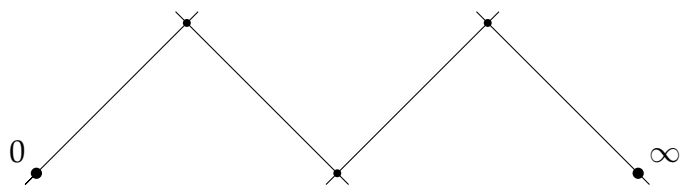
\begin{figure}[tb]
    \centering
\begin{tikzpicture}
  \def\linelength{2cm}
  \def\extension{0.15cm}

  \coordinate (A) at (0,0);
  \coordinate (B) at (\linelength, \linelength);
  \coordinate (C) at (2*\linelength, 0);
  \coordinate (D) at (3*\linelength, \linelength);
  \coordinate (E) at (4*\linelength, 0);

  \draw (A)--(-\extension,-\extension) -- ($(B)+(\extension, \extension)$);
  \draw ($(B)+(-\extension, \extension)$) -- ($(C)+(\extension, -\extension)$);
  \draw ($(C) + (-\extension,-\extension)$) -- ($(D)+(\extension, \extension)$);
  \draw ($(D) + (-\extension,\extension)$) -- ($(E)+(\extension, -\extension)$);

  \fill (B) circle (1.5pt);
  \fill (C) circle (1.5pt);
  \fill (D) circle (1.5pt);
  
  \node[fill,circle,inner sep=1.5pt] (left) at (A) {};
  \node[fill,circle,inner sep=1.5pt] (right) at (E) {};
  \node[above left] at (left) {0};
  \node[above right] at (right) {$\infty$};
\end{tikzpicture}
    \caption{A point in the stack of expansions $\mathsf{Ex}$.}
    \label{fig:ex}
\end{figure}
For any $c \geq 0$ the stack $\mathsf{Ex}$ has a closed codimension $c$ stratum $T_c$ parameterizing chains of length at least $c$. Then for the map $F_b : [\mathrm{LM}(b)/S_b] \to \mathsf{Ex}$ forgetting the $b=2g-3+n$ unordered points, it is easy to see that the class $[T_b]$ of the codimension $b$ stratum pulls back to the class of a point in the Losev-Manin space. On the other hand, by Proposition \ref{pro:tmap} the composition $F_b \circ \br$ factors through a map $\mathsf{t}$ as illustrated in Figure \ref{fig:rubfactorization}. Then defining $\Br_g(\bfx) = \mathsf{t}^* [T_b]$ we conclude
\begin{align*} 
    \mathrm{H}_g(\bfx) &= \int_{[\Mbar^{\sim}_{g,\bfx}]^\mathrm{vir}} \mathrm{br}^*[\mathrm{pt}] = \int_{[\Mbar^{\sim}_{g,\bfx}]^\mathrm{vir}} \mathrm{br}^* F_b^* [T_b] = \int_{\widehat{\mathcal{M}}^\bfx_{g,n}} (\iota_*[\Mbar^{\sim}_{g,\bfx}]^\mathrm{vir}) \cdot \mathsf{t}^* [T_b]\\
    &=\int_{\widehat{\mathcal{M}}^\bfx_{g,n}} \widehat{\DR}_{g}(\bfx) \cdot \Br_g(\bfx)=\int_{\Mbar_{g,n}} \logDR_g(\bfx) \cdot \Br_g(\bfx)\,.
\end{align*}

\subsection{Generalized double Hurwitz numbers}
In the previous section, we saw a formula for the double Hurwitz numbers in terms of intersection numbers of $\logDR_g(\bfx)$. Now, turning the story around one may \emph{define} a generalization of these Hurwitz numbers by pairing $\logDR_g(\bfx)$ with more general cycle classes. 

The first step in this direction was already discussed in \cite{CMR22}: while the double Hurwitz numbers require the entries of the ramification profile $\bfx$ to sum to zero (corresponding to setting $k=0$), the formula \eqref{eqn:doubleHurwitzlogDR} can be extended to arbitrary values of $k \in \mathbb{Z}$. More precisely, by Proposition \ref{pro:tmap} we  have the commutative diagram depicted in Figure \ref{fig:maptoex}.
\begin{figure}[tb]
    \centering
  \begin{tikzcd}
    {[\Mbar^{\sim}_{g,\bfx}]^\mathrm{vir}} \arrow[d, mapsto, "\iota_*",swap] &[-2em] \Mbar^{\sim}_{g,\bfx} \arrow[d, "\iota", swap] \arrow[r,"\mathsf{b}"] &  \mathsf{Ex}\\
    \widehat{\DR}_{g}(\bfx) \arrow[d, mapsto, "q_*", swap] & \widehat{\mathcal{M}}^\bfx_{g,n} \arrow[d, "q", swap] \arrow[ru, bend right=10,"\mathsf{t}"] & \\
    \DR_g(\bfx) & \Mbar_{g,n}
    \end{tikzcd}
    \caption{Maps to the stack of expansions.}
    \label{fig:maptoex}
\end{figure}
  
Thus the definition $\Br_g(\bfx) = \mathsf{t}^* [T_b]$ still makes sense, and the paper \cite{CMR22} defines the \emph{$k$-leaky double Hurwitz numbers}\footnote{Again, since $k$ is determined by $g$ and $\bfx$, we will in general omit the parameter $k$ in the notation for generalized double Hurwitz numbers.}
\begin{equation}
    \mathrm{H}_g(\bfx) =  \mathrm{H}_g(\bfx, k) = \int_{\Mbar_{g,n}} \logDR_g(\bfx) \cdot \Br_g(\bfx)\,.
\end{equation}

While the enumerative meaning of the $k$-leaky Hurwitz numbers is less clear, they share many properties of classical double-Hurwitz numbers (\cite[Theorem B and Section 5]{CMR22}):
\begin{itemize}
    \item they are nonnegative  rational numbers, piecewise polynomial in $\bfx$ and given by polynomials of degree $4g-3+n$ on the chambers of polynomiality,
    \item there is a tropical graph sum formula calculating them as a weighted count of tropical covers of the real line ,
    \item they arise as matrix elements for powers of the {\it k-leaky cut and join operator} on the Fock space.
\end{itemize}

\subsection{Results}
The main object of study of the present paper generalizes $k$-leaky Hurwitz numbers by introducing descendant insertions.

\begin{definition}\label{def-hgxe}
    Given a vector $\bfe \in \mathbb{Z}_{\geq 0}^n$ with $0 \leq |\bfe| \leq 2g-3+n$ let $c = 2g-3+n-|e|$ and consider the \emph{branch class}
    \begin{equation}
        \Br_g^c(\bfx) = \mathsf{t}^* [T_c] \in \logCH^c(\Mbar_{g,n})\,.
    \end{equation}
    Then we define the \emph{$k$-leaky double Hurwitz descendants}
    \begin{equation} \label{eqn:Hxe_def}
        \mathrm{H}_g(\bfx, \bfe) = \int_{\Mbar_{g,n}} \logDR_g(\bfx) \cdot \psi_1^{e_1} \cdots \psi_n^{e_n} \cdot \Br_g^c(\bfx)\,.
    \end{equation}
\end{definition}
For $\bfe = 0$ we recover the $k$-leaky double Hurwitz numbers, whereas for $|e|=2g-3+n$ by the projection formula we obtain the descendant invariants \eqref{eqn:DRdescendants} of the double ramification cycle. Thus these numbers form a natural interpolation between two previously studied enumerative invariants.

Our first result establishes a correspondence theorem between $k$-leaky double Hurwitz descendants numbers and certain tropical counts.

\begin{theorem} \label{thm:tropical_formula}
The $k$-leaky double Hurwitz descendants  equal the count of tropical $k$-leaky covers satisfying Psi-conditions (Definition \ref{def-trophgxe}):

$$\mathrm{H}^{\trop}_g(\bfx, \bfe)=\mathrm{H}_g(\bfx, \bfe).$$
\end{theorem}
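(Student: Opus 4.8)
The plan is to compute both sides by degenerating to the boundary of a suitable log blowup and matching the resulting strata combinatorics with tropical $k$-leaky covers. Concretely, I would first choose a log blowup $\widehat{\mathcal{M}}^\bfx_{g,n} \to \Mbar_{g,n}$ fine enough that (a) the lifted cycle $\widehat{\DR}_g(\bfx)$ is the pushforward $\iota_*[\Mbar^{\sim}_{g,\bfx}]^{\mathrm{vir}}$ (Proposition~\ref{pro:tmap}), (b) the map $\mathsf{t}\colon \widehat{\mathcal{M}}^\bfx_{g,n} \to \mathsf{Ex}$ is defined, and (c) the $\psi$-classes and the branch class $\Br_g^c(\bfx) = \mathsf{t}^*[T_c]$ all have tropical/piecewise-polynomial representatives on $\widehat{\mathcal{M}}^\bfx_{g,n}$ supported on the boundary. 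The key point is that $\mathsf{t}^*[T_c]$ is represented by a piecewise polynomial on the cone complex $\Sigma(\widehat{\mathcal{M}}^\bfx_{g,n})$: the stratum $T_c \subseteq \mathsf{Ex}$ is the locus of chains of length $\geq c$, so $[T_c]$ corresponds to a piecewise polynomial on $\Sigma(\mathsf{Ex})$ (essentially the $c$-th elementary symmetric-type function in the edge lengths of the chain), and pulling back along the induced map of cone complexes $\Sigma(\mathsf{t})$ gives the branch class as an explicit piecewise polynomial in the tropical coordinates.

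Next I would intersect. On the log blowup, $\widehat{\DR}_g(\bfx)$ restricted to the open part is the fundamental class of $\Mbar^{\sim}_{g,\bfx}$, and the product $\psi_1^{e_1}\cdots\psi_n^{e_n}\cdot\Br_g^c(\bfx)$ is a product of a tropical (boundary) class with $\psi$-classes. The strategy is the standard one for tropicalizing such integrals: express the integral as a sum over the top-dimensional cones of the relevant tropical moduli space — here the tropical double ramification / tropical $k$-leaky cover space — where each cone contributes the product of the local multiplicities of the $\psi$-classes (which count, in the usual way, trivalent vertex expansions / graph automorphisms and edge weights) times the value of the piecewise polynomial representing $\Br_g^c(\bfx)$ on that cone, divided by the lattice index coming from the DR cycle's virtual weight on that stratum. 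This is exactly the recipe that produces $\mathrm{H}^{\trop}_g(\bfx,\bfe)$ as defined in Definition~\ref{def-trophgxe}; the $c = 2g-3+n-|\bfe|$ bookkeeping matches dimensions because imposing $e_i$ $\psi$-powers at the $i$-th leg and requiring chains of length $\geq c$ together cut down to dimension zero. The nonnegativity and the combinatorial weights should come out of the graph sum formula for $k$-leaky Hurwitz numbers in \cite{CMR22} combined with the known localization/graph formulas for $\psi$-intersections on $\Mbar^{\sim}_{g,\bfx}$ (as in \cite{BSSZ}).

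The main obstacle I expect is the bookkeeping at the interface between the $\psi$-classes and the branch/tropical class: $\psi$-classes on $\Mbar_{g,n}$ are \emph{not} themselves tropical (boundary) classes, so one cannot simply multiply piecewise polynomials. The fix is to use that on $\Mbar^{\sim}_{g,\bfx}$ (equivalently, after restricting to the DR locus inside the log blowup) the relevant $\psi$-class powers \emph{do} admit a combinatorial description via the string/dilaton-type relations and the explicit rubber $\psi$-intersection formulas, so that pairing $\logDR_g(\bfx)\cdot\psi^{\bfe}$ with a tropical class reduces to a weighted graph count in which the $\psi$-contributions are local vertex factors. Making this precise — choosing the blowup so that all classes are simultaneously "tropicalized," and checking the multiplicities (edge weights, vertex $\psi$-factors, automorphism factors, and the lattice indices from the DR virtual class) agree termwise with Definition~\ref{def-trophgxe} — is the technical heart of the argument. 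A secondary, more routine point is independence of the choice of blowup, which follows from the definition of $\logCH$ as a colimit together with the projection formula.
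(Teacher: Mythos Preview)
Your outline overcomplicates the argument and misidentifies the central difficulty. The obstacle you flag --- that $\psi$-classes on $\Mbar_{g,n}$ are not tropical --- is a non-issue precisely \emph{because} they are pulled back from $\Mbar_{g,n}$: by the projection formula
\[
\int_{\widehat{\mathcal{M}}^\bfx_{g,n}} \widehat{\DR}_g(\bfx)\cdot \Br_g^c(\bfx)\cdot q^*\psi^{\bfe}
=\int_{\Mbar_{g,n}} q_*\bigl(\widehat{\DR}_g(\bfx)\cdot \Br_g^c(\bfx)\bigr)\cdot \psi^{\bfe},
\]
so one computes the pushforward $q_*\bigl(\logDR_g(\bfx)\cdot \Br_g^c(\bfx)\bigr)$ on $\Mbar_{g,n}$ first, and only \emph{then} multiplies by $\psi^{\bfe}$. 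The paper does exactly this: the product $\logDR_g(\bfx)\cdot \Br_g^c(\bfx)$ is represented by the total codimension-$c$ boundary of $\logDR_g(\bfx)$, a sum of strata $\Delta^{\bfx}_\Gamma$; and $q_*\Delta^{\bfx}_\Gamma$ vanishes by a fiber-dimension count unless the source graph has exactly $c+1$ vertices, each mapping to a distinct vertex of the line graph $T_c$. For such $\Gamma$ one gets $q_*\Delta^{\bfx}_\Gamma = m_\Gamma\,\xi_{\Gamma*}\bigl(\prod_v \DR_{g(v)}(\bfx(v))\bigr)$ with $m_\Gamma$ the product of edge expansion factors over $|\Aut(\Gamma)|$.

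After this there is nothing tropical left: the standard identity $\xi_\Gamma^*\psi_i=\pi_{v(i)}^*\psi_i$ splits the integral as a product over vertices, and each vertex integral $\int_{\Mbar_{g(v),n(v)}}\DR_{g(v)}(\bfx(v))\prod_{i\in I_v}\psi_i^{e_i}$ vanishes for dimension reasons unless $n(v)=\sum_{i\in I_v}e_i+3-2g(v)$, which is precisely the Psi-condition. The surviving terms are then literally the summands of Definition~\ref{def-trophgxe}, with the vertex multiplicities of Definition~\ref{def-multvertex} appearing as those vertex integrals --- they are \emph{defined} that way and require no further reduction here. Your proposed detour through rubber $\psi$-intersection formulas and simultaneous tropicalization of all insertions is unnecessary, and for $k\neq 0$ there is no rubber stable-map space available to localize on in the first place. (Also, your remark about nonnegativity is misplaced: it is not part of the theorem and fails in positive genus, cf.\ Example~\ref{ex-genus1}.)
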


The graphs that are being counted are leaky covers as in \cite{CMR22}, i.e. piecewise linear maps from tropical curves to the real line satifying the leaky condition \eqref{eq:leakybal}. The valence, and genus at a given vertex depends on the degree of the descendant insertions at  its incident legs \eqref{eq:psicondition}. Each graph is counted with a multiplicity which is a product of various local factors: besides automorphism, and edge factors, there are now multiplicities assigned to each vertex, which consist of $k$-leaky descendants with no appearence of the branch class (Definition \ref{def-multvertex}).  The key ingredient in proving this result is expressing the pushforward to $\Mbar_{g,n}$ of the product $\logDR_g(\bfx) \cdot  \Br_g^c(\bfx)$ as a linear combination of boundary classes that are described by dual graphs decorated with vertex terms given by smaller-dimensional double ramification cycles. 

The correspondence theorem provides a combinatorial approach to the computation of  $k$-leaky double Hurwitz descendants that allows to observe and characterize their structural properties.

\begin{theorem}[Piecewise polynomiality, see Theorem \ref{thm-pp}] \label{thm:main}
  The $k$-leaky double Hurwitz descendant numbers $\mathrm{H}_g(\bfx, \bfe)$ are piecewise polynomial in $\bfx$ of degree $4g-3+n-|\bfe|$. 
    
\end{theorem}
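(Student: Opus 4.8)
The plan is to reduce everything to the tropical correspondence of Theorem~\ref{thm:tropical_formula}, which writes $\mathrm{H}_g(\bfx,\bfe)$ as the finite weighted sum $\mathrm{H}^{\trop}_g(\bfx,\bfe)=\sum_{\Gamma}\mathrm{mult}(\Gamma)$ over tropical $k$-leaky covers $\Gamma$ satisfying the Psi-conditions (Definition~\ref{def-trophgxe}). The parameter space $\RR^n\ni\bfx$ is cut by a finite arrangement of rational hyperplanes — the resonance walls, together with the loci where some edge weight degenerates — into open chambers, and within a fixed chamber the combinatorial data of the contributing covers (the underlying graph, the vertex genera, the way the $e_i$ are distributed over the vertices, and the orientation induced by the map to $\RR$) is locally constant. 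So it is enough to fix a chamber and a combinatorial type $\Gamma$ and to understand $\mathrm{mult}(\Gamma)$ as a function of $\bfx$ there.

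Next I would unwind the multiplicity. By Definition~\ref{def-multvertex},
\[
\mathrm{mult}(\Gamma)=\frac{1}{|\Aut(\Gamma)|}\left(\prod_{e}w_e\right)\prod_{v}\left(\int_{\Mbar_{g_v,\val(v)}}\DR_{g_v}(\bfx_v)\cdot\prod_{i\,\mathrm{at}\,v}\psi_i^{e_i}\right),
\]
where $e$ runs over the bounded edges, $v$ over the vertices of $\Gamma$, $w_e$ is the edge weight, $g_v,\bfx_v$ are the genus and local ramification data at $v$, and the inner product runs over legs $i$ incident to $v$; the vertex factor is the $k$-leaky descendant of Definition~\ref{def-multvertex}, which carries no branch class and is therefore a pure descendant integral of the ordinary double ramification cycle. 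On a fixed chamber each $w_e$ and each component of $\bfx_v$ is an integral affine-linear function of $\bfx$ — the balancing/leaky conditions \eqref{eq:leakybal} are linear and $k$ is itself linear in $\bfx$ — while $|\Aut(\Gamma)|$ is constant. By the polynomiality of the double ramification cycle (Pixton's formula; cf. the explicit descendant formulas of \cite{BSSZ,CSS}), each vertex factor is polynomial in $\bfx_v$, hence in $\bfx$. Therefore $\mathrm{mult}(\Gamma)$, and with it the finite sum $\mathrm{H}_g(\bfx,\bfe)=\sum_\Gamma\mathrm{mult}(\Gamma)$, is polynomial on each chamber; this gives the piecewise polynomiality.

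For the degree I would do the Euler-characteristic bookkeeping on $\Gamma$. Let $V,E$ be the numbers of vertices and bounded edges and $b_1(\Gamma)=E-V+1$ the first Betti number; for a genus-$g$ cover one has $\sum_v g_v+b_1(\Gamma)=g$ and $\sum_v\val(v)=2E+n$. Writing $\bfe_v$ for the tuple of $e_i$ over legs $i$ incident to $v$ (so $\sum_v|\bfe_v|=|\bfe|$), the Psi-condition forces each vertex integral to be against a zero-dimensional class, i.e. $|\bfe_v|=2g_v-3+\val(v)$; summing over $v$ and using the two identities above yields $V=2g-2+n-|\bfe|$, independently of $\Gamma$. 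Since $\prod_e w_e$ has $\bfx$-degree $E=V-1+b_1(\Gamma)$ and each vertex factor has $\bfx$-degree at most $2g_v$ (the $\bfx_v$-degree of $\DR_{g_v}$), we obtain
\[
\deg_{\bfx}\mathrm{mult}(\Gamma)\ \le\ E+\sum_v 2g_v\ =\ \big(V-1+b_1(\Gamma)\big)+2\big(g-b_1(\Gamma)\big)\ =\ 4g-3+n-|\bfe|-b_1(\Gamma),
\]
so $\deg_{\bfx}\mathrm{H}_g(\bfx,\bfe)\le 4g-3+n-|\bfe|$, the bound dropping by $b_1(\Gamma)$ for non-tree covers. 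To get equality one isolates the tree-shaped covers (those with $b_1(\Gamma)=0$, so $\sum_v g_v=g$) as the only possible contributors to the top degree, notes that there the degree-$E$ part of $\prod_e w_e$ is a monomial with positive coefficient on the chamber while each genus-$g_v$ vertex factor contributes its leading term of degree $2g_v$, and checks that these leading contributions do not globally cancel. In genus $0$ this is automatic, since every vertex factor is then a positive rational constant (a multinomial coefficient $\int_{\Mbar_{0,\val(v)}}\prod_i\psi_i^{e_i}$) and all edge weights are positive on the chamber, so the top-degree coefficient is a sum of positive numbers; in general one argues as in the $\bfe=0$ case of \cite{CMR22}.

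The polynomiality itself is essentially formal once Theorem~\ref{thm:tropical_formula} and the polynomiality of DR descendant integrals are in hand; the point that requires genuine work, and the main obstacle, is the sharpness of the degree — producing tree covers that realize degree $4g-3+n-|\bfe|$ and ruling out a cancellation among their leading terms. Note that for $|\bfe|=2g-3+n$ this specializes to the (known) degree of the descendant integrals \eqref{eqn:DRdescendants}, and for $\bfe=0$ to the degree $4g-3+n$ of the $k$-leaky Hurwitz numbers of \cite{CMR22}, which should supply the template for the general argument.
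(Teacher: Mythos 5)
There is a genuine gap, and it occurs exactly where the genus is positive. Your reduction to the tropical side and your treatment of a fixed combinatorial type rest on the assertion that, within a chamber, ``each $w_e$ and each component of $\bfx_v$ is an integral affine-linear function of $\bfx$.'' This is only true for covers whose underlying graph is a tree (after accounting for vertex genera). If the graph has first Betti number $g'=b_1(\Gamma)>0$, the leaky balancing conditions \eqref{eq:leakybal} leave $g'$ edge weights undetermined: the covers of that fixed combinatorial type are parametrized by the integer points of a bounded polytope (in coordinates $i_1,\dots,i_{g'}$) whose facets move affinely with $\bfx$, and their number varies with $\bfx$. So ``the finite sum $\sum_\Gamma \mathrm{mult}(\Gamma)$ is polynomial on each chamber'' does not follow from your per-cover analysis; the real work is to sum a polynomial in $(\bfx,i_1,\dots,i_{g'})$ over these lattice points and show that this discrete integration is again piecewise polynomial in $\bfx$ (this is how the paper argues, using boundedness of the chamber from \cite{CJM10}). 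Your argument as written only establishes the theorem in the tree case, i.e.\ essentially in genus $0$.

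The same oversight invalidates your degree bookkeeping and the concluding strategy. Summing over the $g'$-dimensional family of lattice points raises the degree by $g'$, so a type with cycles contributes a polynomial of degree up to $E+\sum_v 2g_v+g'=4g-3+n-|\bfe|$, not $4g-3+n-|\bfe|-b_1(\Gamma)$ as you claim; hence trees are \emph{not} the only contributors to the top degree, and ``isolate the tree-shaped covers and rule out cancellation among them'' is the wrong endgame. Example \ref{ex-genus1} makes this concrete: for $H_1(d,-(d-2k))$ the cover type with a cycle ($b_1=1$) contributes $\tfrac{1}{2}\sum_{i=1}^{d-k-1} i(d-k-i)$, a cubic in $d$, which is exactly the expected degree $4g-3+n-|\bfe|=3$, while your bound would cap it at $2$. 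To repair the proof you should follow the paper's route: fix $g'$ independent cycle edges with weights $i_1,\dots,i_{g'}$ as free variables, note that all other expansion factors are affine-linear in $(\bfx,i_k)$, count degrees per cover as $E+\sum_v 2g_v$ in these variables (using \cite{PZ24} for the vertex integrals), and then add $g'$ for the lattice-point summation to land on $n-3+4g-|\bfe|$.
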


As in the case for double Hurwitz numbers \cite{Cavalieri2011-Wall-crossings}, the wall-crossing formulas are modular, in the sense that they are expressed as sums of products of descendant leaky numbers with smaller discrete invariants. We do not work out general wall-crossing formulas in this paper as the combinatorial complexity seems to outweight the benefits, but we do present the result in genus zero, where the formulas are succint and elegant.

\begin{theorem}[Wall-crossing in genus zero]\label{thm-wc}

Fix a wall $\delta:= \sum_{i\in I} x_i -k\cdot (\sharp I-1)=0 $ and denote by $P_1^\delta$ the polynomial expression for $\mathrm{H}^{\trop}_0(\bfx, \bfe)$ we have on one side of the wall and by $P_2^\delta$ the expression on the other side of the wall.

Define $\bfe_I=(e_i)_{i\in I}$. Let $r=n-2-|\bfe|$, $r_1= \sharp I -1-|\bfe_I|$ and $r_2=\sharp I^c - 1 - |\bfe_{I^c}|$.

Then the wall-crossing, i.e.\ the difference between the two polynomial expressions on both sides of the wall, equals

$$ P_1^\delta - P_2^\delta = \binom{r}{r_1,r_2} \cdot \delta \cdot \mathrm{H}_0(\bfx_I\cup \{\delta\}, \bfe_I) \cdot \mathrm{H}_0(\bfx_{I^c}\cup \{-\delta\}, \bfe_{I^c}).$$
    \end{theorem}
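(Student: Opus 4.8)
The plan is to derive the wall-crossing formula from the tropical graph sum formula of Theorem \ref{thm:tropical_formula}, following the strategy already used for $k$-leaky double Hurwitz numbers in \cite{CMR22} and for classical double Hurwitz numbers in \cite{Cavalieri2011-Wall-crossings}. In genus zero, $\mathrm{H}^{\trop}_0(\bfx, \bfe)$ is a sum over tropical $k$-leaky covers satisfying the Psi-conditions, each weighted by a product of local vertex multiplicities (which are themselves genus-zero $k$-leaky descendants with no branch class), edge factors, and an automorphism factor. As the point $\bfx$ moves from one side of the wall $\delta = \sum_{i \in I} x_i - k(\sharp I - 1) = 0$ to the other, most graphs vary polynomially, but certain bounded edges change the direction of their image in $\RR$; the difference $P_1^\delta - P_2^\delta$ therefore localizes on the graphs containing an edge whose expansion factor vanishes exactly on the wall, and the contribution is governed by the combinatorics of cutting such graphs at that edge.

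The key steps, in order, are as follows. First I would fix a generic direction of approach to the wall and identify which combinatorial types $\Gamma$ contribute to the difference: these are precisely the covers possessing a single bounded edge $e$ separating the source curve into two subtrees, one carrying the markings indexed by $I$ and the other those indexed by $I^c$, such that the expansion (slope/weight) of $e$ equals $\pm\delta$ and hence changes sign across the wall. Second, I would compute, for each such $\Gamma$, the jump in its contribution: the edge factor contributes a factor of $\delta$ (the weight of the degenerating edge), and the two subtrees contribute, after summing over all ways to distribute the remaining structure, exactly the genus-zero leaky descendants $\mathrm{H}_0(\bfx_I \cup \{\delta\}, \bfe_I)$ and $\mathrm{H}_0(\bfx_{I^c} \cup \{-\delta\}, \bfe_{I^c})$, where the new marking on each side carries ramification $\pm\delta$ and Psi-exponent $0$. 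Third, I would account for the Psi-conditions: the total number $r = n - 2 - |\bfe|$ of ``free'' branch points (equivalently, the number of trivalent-with-descendant vertices not forced by Psi-insertions) must be split between the two sides; this is where the multinomial coefficient $\binom{r}{r_1, r_2}$ enters, counting the ways to assign the $r$ branch-point/vertex slots to the $I$-side ($r_1 = \sharp I - 1 - |\bfe_I|$ of them) versus the $I^c$-side ($r_2 = \sharp I^c - 1 - |\bfe_{I^c}|$ of them). Finally I would check the bookkeeping of automorphism factors and the orientation conventions to confirm the overall sign and that no other graphs contribute.

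The main obstacle I anticipate is the careful matching, on each side of the cut edge, between the local sum over subtrees appearing in the wall-crossing computation and the \emph{full} tropical expression for a lower-marking leaky descendant $\mathrm{H}_0(\bfx_I \cup \{\delta\}, \bfe_I)$. One must verify that the balancing/leaky condition \eqref{eq:leakybal} restricted to a subtree is exactly the leaky condition for the smaller cover with the extra leg of weight $\delta$, that the vertex multiplicities (which are themselves leaky descendants with no branch class) are inherited without modification, and that the Psi-conditions \eqref{eq:psicondition} at the vertices of the subtree are precisely those defining the smaller invariant. There is also a subtlety in ensuring that edges of the original cover other than $e$ do not accidentally acquire vanishing expansion on the wall for a generic choice of wall — i.e. that the wall $\delta = 0$ is ``simple'' — and in handling the degenerate cases where $r_1$ or $r_2$ is negative (in which case the corresponding invariant, and hence the wall-crossing, vanishes, consistent with the binomial coefficient being zero). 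Once these local-to-global identifications are in place, the formula follows by collecting terms.
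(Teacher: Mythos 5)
Your proposal is correct and follows essentially the same route as the paper: both localize the difference of the two polynomials on tropical covers containing an edge of expansion factor $\pm\delta$, cut and reglue at that edge to identify the two pieces with covers counted by $\mathrm{H}_0(\bfx_I\cup\{\delta\},\bfe_I)$ and $\mathrm{H}_0(\bfx_{I^c}\cup\{-\delta\},\bfe_{I^c})$, obtain $\binom{r}{r_1,r_2}$ from interlacing (shuffling) the $r_1$ and $r_2$ vertex images along the line, and let the weight $\pm\delta$ of the reglued edge supply the missing edge factor while absorbing the sign from which side of the wall the reglued cover contributes to. The subtleties you flag (simplicity of the wall, inheritance of the leaky and Psi-conditions on each piece, vanishing when $r_1$ or $r_2$ is negative) are exactly the bookkeeping the paper's argument handles implicitly.
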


The count of leaky covers satisfying $\psi$ conditions reduces the computation of all descendant leaky double Hurwitz numbers to those where the branch insertion is trivial \eqref{eqn:DRdescendants}. We seek to further improve the situation by shrinking the class of initial conditions needed. We observe that any descendant insertion is equivalent to a linear combination of boundary divisors together with the class $\kappa_1$, thanks to a second splitting formula for the double ramification cycle (essentially proven in \cite{CSS}). It shows that the cycles
    \[
    x_i \psi_i \cdot \DR_g(\bfx) \text{ and } \frac{k}{2g-2+n} \kappa_1 \cdot \DR_g(\bfx) 
    \]
    differ by a sum over graphs with two vertices carrying $\DR$-cycles (see Proposition \ref{pro:DR_psi_splitting}). Using this relation, we can calculate the intersection number \eqref{eqn:DRdescendants} by exchanging one $\psi$-class after the other for either a multiple of $\kappa_1$ or a term supported in the boundary. After performing this procedure $2g-3+n$ times, we are left with a graph sum with vertex terms only involving powers of the class $\kappa_1$.
    
To state a formal recursion we introduce the notation
\begin{equation}  \label{eqn:psi_kappa_descendant}
    \mathrm{H}_g(\bfx, \bfe, f) = \int_{\Mbar_{g,n}} \DR_g(\bfx) \cdot \psi_1^{e_1} \cdots \psi_n^{e_n} \cdot \kappa_1^f\,
\end{equation}
mixing $\psi$-insertions with a power of $\kappa_1$.

\begin{theorem}
 The numbers $\mathrm{H}_g(\bfx, \bfe, f)$ can be recursively computed (see \eqref{eqn:Hxef_recursion}) from the intersection numbers
    \begin{equation} \label{eqn:kappa1intersection}
        \mathrm{H}_{g'}(\bfx', \mathbf{0}, 2g'-3+n') = \int_{\Mbar_{g',n'}} (k \kappa_1)^{2g'-3+n'} \cdot \DR_{g'}(\bfx')\,.
    \end{equation}   
\end{theorem}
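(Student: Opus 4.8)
The plan is to set up a finite recursion that, at each step, removes one descendant $\psi$-class from the integral $\mathrm{H}_g(\bfx, \bfe, f)$ at the cost of introducing either a power of $\kappa_1$ or a sum of two-vertex boundary terms, each again carrying a $\DR$-cycle and fewer $\psi$-classes in total. First I would recall the splitting relation of Proposition \ref{pro:DR_psi_splitting}, which says that for any marking $i$ with $x_i \neq 0$,
\[
x_i \psi_i \cdot \DR_g(\bfx) = \frac{k}{2g-2+n}\kappa_1 \cdot \DR_g(\bfx) + (\text{sum over two-vertex graphs carrying }\DR\text{-cycles}).
\]
Assuming $\bfe \neq \mathbf 0$, pick an index $i$ with $e_i \geq 1$. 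If $x_i \neq 0$, multiply the displayed identity by $\psi_1^{e_1}\cdots\psi_i^{e_i-1}\cdots\psi_n^{e_n}\cdot\kappa_1^f$ and integrate: the left side becomes $x_i\,\mathrm{H}_g(\bfx,\bfe,f)$, the first term on the right becomes $\frac{k}{2g-2+n}\mathrm{H}_g(\bfx,\bfe - \mathbf{e}_i, f+1)$, and on each two-vertex boundary term one distributes the remaining $\psi$- and $\kappa$-insertions between the two vertices via the splitting/restriction formalism (pullback of $\psi_j$ along the gluing map is $\psi_j$ on the appropriate side plus a correction, and $\kappa_1$ pulls back to $\kappa_1 + \kappa_1$ plus a $\psi$-correction at the two new half-edges; $\psi_i^{e_i-1}$ restricts to the vertex carrying leg $i$). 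This expresses $\mathrm{H}_g(\bfx,\bfe,f)$ as a $\QQ$-linear combination of terms of the same shape $\mathrm{H}_{g'}(\bfx',\bfe',f')$ but with strictly smaller total $|\bfe'|$ (in the $\kappa_1$-term) or strictly smaller $g'+n'$ with $|\bfe'|$ non-increasing (in the boundary terms, where genus and markings are split between two nonempty pieces); this is exactly the content of the asserted equation \eqref{eqn:Hxef_recursion}.

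The one case requiring separate handling is when every index $i$ with $e_i \geq 1$ satisfies $x_i = 0$. Here I would first use the dilaton/string-type comparison or, more robustly, the fact that a marking with $x_i = 0$ can be pushed forward under the forgetful map $\pi\colon\Mbar_{g,n}\to\Mbar_{g,n-1}$ with $\pi^*\DR_{g}(\bfx\setminus\{x_i\}) = \DR_g(\bfx)$ when $x_i=0$ (compatibility of the double ramification cycle with pulling back the trivial ramification condition), together with $\pi_*(\psi_i^{e_i} \cdot \pi^*\alpha) = \kappa_{e_i-1}\cdot\alpha$ and the standard expression of higher $\kappa$-classes in terms of $\kappa_1$, boundary pushforwards, and lower descendants on $\Mbar_{g,n-1}$. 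Alternatively, if at least one $x_j \neq 0$ exists with $e_j = 0$ — which must happen unless $\bfx = \mathbf 0$, a case where $k=0$ and one can appeal to \cite{BSSZ} directly — one applies the splitting relation at that index $j$ to convert a $\kappa_1$ into $x_j\psi_j$ plus boundary, thereby reducing $f$, then redistributes and iterates. In all cases the key point is that the recursion terminates: each step strictly decreases the positive integer $|\bfe| + (g+n)$ (with suitable bookkeeping), and the base case $|\bfe|=0$, $n=n'$, $g=g'$ with $f = 2g-3+n$ is precisely the integral \eqref{eqn:kappa1intersection}, while any other base case ($|\bfe|=0$ but $f < 2g-3+n$) contributes zero by dimension reasons since $\DR_g(\bfx)$ has dimension $2g-3+n$ and $\kappa_1^f$ only lowers dimension by $f$, forcing $f = 2g-3+n$ for a nonzero number.

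The main obstacle I expect is the careful bookkeeping of the boundary contributions: each two-vertex graph in Proposition \ref{pro:DR_psi_splitting} comes with a gluing map $\xi\colon \Mbar_{g_1,n_1+1}\times\Mbar_{g_2,n_2+1}\to\Mbar_{g,n}$, and to keep the recursion within the class of integrals $\mathrm{H}_{g'}(\bfx',\bfe',f')$ I must verify (i) that $\xi^*\psi_j$, $\xi^*\kappa_1$, and $\xi^*\DR_g(\bfx)$ all split as $\DR$-cycles times descendant monomials on the two factors — the splitting of $\DR$ is standard (\cite{JPPZ}), the splitting of $\kappa_1$ introduces a $-\psi$ at each new half-edge, and I need to track the resulting extra $\psi$-powers; (ii) that the new ramification vectors $\bfx'$ on each side (obtained by appending $\pm\delta$ for the edge) still satisfy the degree constraint $|\bfx'| = k(2g'-2+n')$ at the adjusted $k$; and (iii) that the combinatorial weight $\binom{\cdots}{\cdots}$ distributing the untouched $\psi$- and $\kappa_1$-insertions across the two vertices is correctly computed. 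Once this linear-algebra-over-graphs bookkeeping is in place, the proof is a straightforward induction; I would organize it by displaying the explicit recursion \eqref{eqn:Hxef_recursion} and then checking termination and the base case as above.
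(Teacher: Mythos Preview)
Your general framework is correct and matches the paper: apply the $\psi$-splitting of Proposition \ref{pro:DR_psi_splitting} at an index $s$ with $e_s \geq 1$, multiply by the remaining $\psi^{\bfe_s}\kappa_1^f$, and integrate to obtain a recursion that lowers $|\bfe|$ by one. The boundary bookkeeping is also simpler than you fear: for a gluing map $\xi_\Gamma$ one has $\xi_\Gamma^*\kappa_1 = \kappa_{1,v_0}+\kappa_{1,v_1}$ with \emph{no} $\psi$-correction at the half-edges (the correction you have in mind appears for forgetful pullbacks, not gluing pullbacks), so the split of $\kappa_1^f$ is a clean binomial expansion $\binom{f}{f_0,f_1}\kappa_{1,v_0}^{f_0}\kappa_{1,v_1}^{f_1}$ and the recursion stays entirely within the class $\mathrm{H}_{g'}(\bfx',\bfe',f')$. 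Termination is also simpler than you state: every term on the right of \eqref{eqn:Hxef_recursion} has strictly smaller $|\bfe'|$, so $|\bfe|$ alone is the induction parameter.

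The genuine gap is your treatment of the case where every $s$ with $e_s\geq 1$ has $x_s=0$. Your first workaround (forget the marking with $x_s=0$) produces a higher $\kappa$-class $\kappa_{e_s-1}$, which takes you outside the recursion scheme and would require a separate reduction you do not supply. Your second workaround (apply the splitting at some $j$ with $x_j\neq 0$ and $e_j=0$ to trade one $\kappa_1$ for $\psi_j$) \emph{increases} $|\bfe|$ by one; applying the basic step at $j$ then decreases it again, and you are back where you started modulo boundary terms---this does not terminate in any obvious way. The paper avoids the issue entirely with a device you do not mention: by \cite{PZ24} the number $\mathrm{H}_g(\bfx,\bfe,f)$ is a \emph{polynomial} in the entries of $\bfx$, so \eqref{eqn:Hxef_recursion} is an identity of polynomials in the formal variables $x_t$. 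One may therefore divide both sides by $x_s(2g-2+n)$ as polynomials, regardless of whether a particular value of $x_s$ vanishes. This determines the polynomial $\mathrm{H}_g(\bfx,\bfe,f)$ from polynomials with strictly smaller $|\bfe|$, and the induction bottoms out at $\bfe=\mathbf 0$, which is \eqref{eqn:kappa1intersection}.
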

Combined with the tropical graph sum formula from Theorem \ref{thm:tropical_formula}, this uniquely determines the $k$-leaky double Hurwitz descendants from the initial data of the numbers \eqref{eqn:kappa1intersection}. These intersection numbers with powers of $\kappa_1$  in turn have been characterized explicitly in upcoming work \cite{Sauvaget_integral} by Sauvaget. In fact, Sauvaget's paper will give a full formula for the integrals $H_g(\bfx, \bfe)$ with $|e|=2g-3+n$ appearing as the vertex multiplicities of our tropical graph sum formula.

Lastly we turn our attention to unexpected vanishings of leaky descendant double Hurwitz numbers. In genus $0$, we can compute such numbers as weighted graph sums where each graph carries a non-negative contribution. 
(This is not true in higher genus, see Example \ref{ex-genus1}.)
This gives a blunt but powerful tool to show non-vanishing, and in fact positivity: it suffices to exhibit a single graph with a non-zero contribution.

If $k=0$, an easy argument shows that $H_{0}(\mathbf{x}, \mathbf{e})>0$ unless $\bfx=0$ and $n> |\mathbf{e}|+3$, see Remark \ref{rem-k=0}.
For the more interesting case $k\neq 0$, we are able to witness, and classify some exotic vanishing  behavior, as summarized in the following theorem.

\begin{theorem}\label{thm-positivity}
Let $g=0$ and $k\neq 0$. Let $|\bfx|=(n-2)k$ and $0\leq |\mathbf{e}| \leq n-3$.

The $k$-leaky descendant $H_{0}(\mathbf{x}, \mathbf{e})$ vanishes if an only if  $k$ is even, $x_i=m_i\cdot \frac{k}{2}$ for $m_i\in \mathbb{N}_{>0}$ and for every subset $I\subset\{1,\ldots,n\}$ we have
$$\sum_{i\in I}e_i<  \sum_{i\in I}m_i - |I|+1.$$ 
In all other cases we have $H_{0}(\mathbf{x}, \mathbf{e})>0$.
\end{theorem}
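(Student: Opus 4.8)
The plan is to argue entirely on the tropical side, using Theorem~\ref{thm:tropical_formula} to replace $H_0(\bfx,\bfe)$ by the weighted count $H_0^{\trop}(\bfx,\bfe)$ together with the genus-$0$ non-negativity of the summands. After the reflection $x_i\mapsto -x_i$ (which sends $k\mapsto -k$) we may assume $k>0$. The first task is a clean \emph{positivity criterion for a single combinatorial type}. A genus-$0$ type is a tree $\Gamma$ with $n$ legs decorated by $\bfe$, where each vertex $v$ has valence $d_v=3+\sum_{\ell\in L_v}e_\ell$, so that its multiplicity --- a genus-$0$ descendant $\DR$-integral with no branch class --- equals $\int_{\overline{\cM}_{0,d_v}}\prod_\ell\psi_\ell^{e_\ell}=\binom{d_v-3}{(e_\ell)_\ell}>0$, using that $\DR_0$ is the fundamental class in genus $0$. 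Summing the $k$-leaky balancing over the vertices on one side of an internal edge $e$ telescopes the interior slopes and leaves
\[
w_e=\Bigl|\sum_{i\in I_e}x_i-k\bigl(|I_e|-1\bigr)\Bigr|=:|\delta_{I_e}|,
\]
with $I_e$ the set of legs on one side of $e$; the left--right orientation is then forced by $w_e>0$, and since $\Gamma$ is a tree such a forced orientation is always globally realizable. Hence $\Gamma$ contributes a strictly positive amount if and only if $\delta_{I_e}\neq 0$ for every internal edge, and the whole problem becomes: \emph{does there exist a valid tree all of whose edge-cuts $(I,I^c)$ avoid the walls $\sum_{i\in I}x_i=k(|I|-1)$?} Establishing this criterion (unwinding the leaky balancing, identifying the vertex multiplicities, checking realizability) is the technical backbone; everything afterwards is combinatorics.

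\emph{The vanishing direction.} First simplify: the displayed subset condition is equivalent to $e_i<m_i$ for every $i$ (specialize to singletons; then sum). This forces $c:=n-3-|\bfe|\geq 1$, since $\sum_i(m_i-e_i)=2n-4-|\bfe|$ while each $m_i-e_i\geq 1$. Now fix any valid tree; I want to exhibit a wall. Write $\delta_I=\tfrac k2\bigl(\sum_{i\in I}m_i-2(|I|-1)\bigr)$ and put $\phi(I):=\sum_{i\in I}(m_i-2)+2\in\ZZ$, so $\phi(I)=-\phi(I^c)$ and ``$\delta_{I_e}=0$'' means ``$\phi(I_e)=0$''. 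Using $m_i\geq e_i+1$ together with $d_v=3+\sum_{L_v}e_\ell$ one checks $\sum_{\ell\in L_v}(m_\ell-2)\geq \deg_v-3$ for every vertex $v$ (with $\deg_v$ the number of incident internal edges); in particular $\phi\geq 0$ on the leaf side of each leaf edge. Rooting at an arbitrary vertex $\rho$ and summing over the edges at $\rho$, since $\sum_i(m_i-2)=-4$ this gives $\sum_{e\ni\rho}\phi\bigl(I_e^{\mathrm{away}}\bigr)\leq \deg_\rho-1$. Suppose no edge were a wall; orient each edge toward the unique side on which $\phi>0$. This is an orientation of a tree, hence acyclic, hence has a source $\rho$; at that source every $\phi(I_e^{\mathrm{away}})\geq 1$, so the sum is $\geq\deg_\rho$, contradicting $\leq\deg_\rho-1$ (note $\deg_\rho\geq 1$ since $c\geq 1$). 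Hence every valid tree has a wall and $H_0(\bfx,\bfe)=0$.

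\emph{The positivity direction.} Here I would induct on $c$ using a \emph{peeling} operation: pick a leg set $S$ with $|S|=2+\sum_{i\in S}e_i$ and $\delta_S\neq 0$, realize $S$ as a single leaf vertex attached by one edge, and pass to the smaller problem on legs $(\{1,\dots,n\}\setminus S)\cup\{\ast\}$ with $x_\ast=\delta_S$, $e_\ast=0$. This preserves $|\bfx|=k(n-2)$ and $0\leq|\bfe|\leq n-3$; the edge-cuts of the big tree are exactly $S$ together with those of the reduced problem; and the base case $c=0$ is the star, which always contributes $\binom{n-3}{\bfe}>0$. So it suffices to show that outside the vanishing locus one can peel so as to stay outside it. If $k$ is odd this is automatic after any peel, and a valid $S$ exists because among the $\geq n-|\bfe|\geq 4$ legs with $e_i=0$ not all pairwise sums $x_i+x_j$ can equal $k$ (else three of them would equal $k/2\notin\ZZ$). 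If $k$ is even but some $x_{i_0}$ is not a positive integer multiple of $k/2$, one peels an $S$ containing $i_0$ --- there is enough supply of $e=0$ legs since $|\bfe|\leq n-3$ --- arranged so that $x_\ast=\delta_S$ is again not a positive multiple of $k/2$, which keeps us in the same case.

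The remaining case --- $k$ even, all $x_i=m_i\cdot\tfrac k2$ with $m_i>0$, but $e_{j_0}\geq m_{j_0}$ for some $j_0$ --- is the heart of the matter and the step I expect to be the main obstacle. Here one must use the ``heavy'' leg $j_0$ to peel off a set $S\ni j_0$ whose output $x_\ast=\delta_S=\tfrac k2\bigl(\sum_{i\in S}m_i-2(|S|-1)\bigr)$ is $\leq 0$, equivalently $\sum_{i\in S}(m_i-2e_i)\leq 1$; this is conceivable precisely because $j_0$ contributes $m_{j_0}-2e_{j_0}\leq -1$, and it drops us into the previous case, closing the induction. The delicate point is that $S$ must simultaneously have the right cardinality $|S|=2+\sum_{i\in S}e_i$ and small enough weight-sum, which requires carefully matching the $\psi$-budget of $S$ against the available supply of $m=1$ (i.e.\ $x=k/2$) legs --- possibly after first peeling off some ordinary trivalent leaf vertices with $m_a+m_b\geq 3$, and possibly with a judicious choice of $j_0$ among the legs violating $e_i<m_i$. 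Pushing this bookkeeping through in all configurations is where the real work lies.
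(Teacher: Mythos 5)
Your reduction to the tropical side is sound: in genus zero every vertex multiplicity is a positive multinomial coefficient, the edge weight across a cut $(I,I^c)$ of a tree is forced to be $\bigl|\sum_{i\in I}x_i-k(|I|-1)\bigr|$, and any wall-avoiding tree with the prescribed Psi-valences is realizable as a cover, so positivity is equivalent to the existence of such a tree. Your vanishing direction is complete and correct, and in fact takes a different route from the paper: you first observe that the subset condition is equivalent to the singleton conditions $e_i<m_i$, then run a potential-function argument (the quantity $\phi(I)=\sum_{i\in I}(m_i-2)+2$, the per-vertex bound, and the source of the induced orientation) to show every admissible tree must contain a zero-weight edge. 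The paper instead (Proposition \ref{prop-multk/2is0}) derives the contradiction at the rightmost vertex of a hypothetical cover; your argument is a legitimate and arguably slicker alternative for that half.

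The genuine gap is in the positivity direction, and it sits exactly where you say it does: the case $k$ even, all $x_i=m_i\cdot\frac k2$ with $m_i>0$, and the subset condition violated (equivalently $e_{j_0}\ge m_{j_0}$ for some $j_0$). Your peeling induction needs, in that case, a set $S\ni j_0$ of the correct cardinality $|S|=2+\sum_{i\in S}e_i$ with $\sum_{i\in S}(m_i-2e_i)\le 1$ (so that $\delta_S<0$ and the reduced datum leaves the vanishing locus), and you explicitly defer the verification that such an $S$ always exists; but this is precisely the combinatorial heart of the theorem, carried out in the paper as Proposition \ref{lem-multk/2withPsi>0}, whose proof requires the reduction to a support set with $e_i>0$, the construction of the rightmost vertex $v_R$ fed by a caterpillar cover (Proposition \ref{prop-caterpillaregg}), and the nontrivial Claim $m_L>0$ proved by a separate counting argument. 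Without an argument of that kind your ``only if'' direction is unproven on this locus, so the biconditional is not established. (The intermediate case, $k$ even with some entry not a positive multiple of $\frac k2$, is also only gestured at in your write-up --- ``arranged so that $x_\ast=\delta_S$ is again not a positive multiple of $k/2$'' --- though that one can be completed along your lines, or more directly as in the paper via Lemma \ref{Lem:classification_vanishing_e0} together with the edge-shrinking Lemma \ref{lem-poswithdescendants}.)
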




\subsection{Current landscape and future directions}

The $k$-leaky descendant double Hurwitz numbers are a large family of enumerative invariants rich in combinatorial structure and closely tied to the geometry of double ramification cycles, and have been studied and progressively generalized over the last few decades. In Table \ref{tab:landscape1}  we list the known results on the numbers $\mathrm{H}_g(\bfx, \bfe)$.

\begin{table}[tb]
\hspace{-2.5cm}
\renewcommand{\arraystretch}{2}
\begin{tabular}{c V{3} cc V{3} c|c|c|c }
 &  & & $\Br_g(\bfx)$ & \multicolumn{2}{c|}{$\Br_g^c(\bfx) \cdot \psi^{\bfe}$} & $\psi^{\bfe}$\\
 & &  & & $\bfe = e_1 \delta_1$ & $\bfe$ arbitrary & \\
  \Xhline{3\arrayrulewidth}
  \multirow{4}{*}{\makecell{$g=0$}} &
  \multirow{2}{*}{tot. ram.}   & $k=0$   & \makecell{formula\\ \cite{Goulden1992The-combinatorial-relation}} & \cite{Cavalieri2023-One-part-leaky} & & \multirow{4}{*}{\makecell{classical\\formula\\$\binom{n-2}{\bfe}$}} \\
 \cline{3-6} & & $k>0$      & \makecell{formula\\ \cite{Cavalieri2023-One-part-leaky}} & \cite{Cavalieri2023-One-part-leaky}  & & \\
 \cline{2-6}  & \multirow{2}{*}{arb. ram.}   & $k=0$    & \makecell{wall-crossing\\ \cite{Shadrin2008-Chamber-structure}}  &  & & \\
 \cline{3-6} & & $k>0$      &  &  & \makecell{wall-crossing\\{[CMS24]}} &
  \\
\Xhline{3\arrayrulewidth}
 \multirow{4}{*}{\makecell{$g>0$}}& \multirow{2}{*}{tot. ram.}   & $k=0$  & \makecell{one-part double\\Hurwitz numbers\\ \cite{Goulden2005Towards-the-geo} } & \cite{Cavalieri2023-One-part-leaky} & & see below \\
 \cline{3-7}  & & $k>0$      &  &   & & see below \\
  \cline{2-7} & \multirow{2}{*}{arb. ram.}   & $k=0$    & \makecell{double\\Hurwitz numbers\\wall-crossing\\ \cite{Cavalieri2011-Wall-crossings}} &  & & \makecell{generating\\function\\ \cite{BSSZ}}\\
 \cline{3-7}  & & $k>0$      & \makecell{$k$-leaky double\\Hurwitz numbers\\piecew. polynomiality\\\cite{CMR22} } &  & \makecell{piecewise\\polynomiality\\ {[CMS24]}} & \makecell{generating\\function\\\cite{CSS}\\for $\bfe = e_1\delta_1$,\\\cite{Sauvaget_integral} for all $\bfe$}
\end{tabular}

\caption{The landscape of known results on double Hurwitz descendants $\mathrm{H}_g(\bfx, \bfe)$. Here [CMS24] denotes the present paper.}
\label{tab:landscape1}
\end{table}

Depending on the parameters $g, \bfx, \bfe$ they have been characterized to different extents, ranging from structural properties to explicit formulas:
\begin{itemize}
    \item  \emph{explicit formulas} for (non-leaky) double Hurwitz numbers have been found mostly in the special case where all but one entry of $\bfe$ have the same sign (the case of total ramification, see \cite{Goulden1992The-combinatorial-relation, Goulden2005Towards-the-geo}). In the forthcoming paper \cite{Cavalieri2023-One-part-leaky} we extend these formulas to $k$-leaky double Hurwitz numbers and even to the case of $\psi$-insertions at the marking of total ramification;
    \item  for $k$-leaky double ramification descendants (without branch conditions), explicit \emph{generating functions} have been found in \cite{BSSZ} (for $k=0$) and \cite{CSS} (for $k>0$ and insertions given by a power of a single $\psi$-class). The forthcoming paper \cite{Sauvaget_integral} proves a full formula for the generating function;
    \item the \emph{piecewise polynomiality} of $\mathrm{H}_g(\bfx, \mathbf{0})$ and \emph{wall-crossing formulas} for different values of $\bfx$ were found  in \cite{Shadrin2008-Chamber-structure, Cavalieri2011-Wall-crossings} for $k=0$ and \cite{CMR22} for $k>0$. In our paper we show the piecewise polynomiality in full generality and work out the wall-crossing structure for $g=0$.
\end{itemize}
As illustrated in Table \ref{tab:landscape1}, there is lots of room for progress. In particular, in the total ramification case it seems feasible to hope for an explicit formula in genus $0$, and an approach via generating functions in arbitrary genus.

A second direction of study is the enumerative interpretation of $k$-leaky double Hurwitz numbers. As explained before, for $k=0$ the number $\mathrm{H}_g(\bfx, \mathbf{0})$ counts covers of the projective line with fixed ramification profiles over $0, \infty$ and simple ramification over $2g-2+n$ other points. It is natural to expect that for arbitrary $k$, the number $\mathrm{H}_g(\bfx, \mathbf{0})$ could be a count of $k$-differentials with given zero and pole-orders together with $2g-2+n$ further conditions reducing the dimension to zero.

One approach in this direction is studied by \cite{GendronTahar, BuryakRossiCount, ChenPrado} in the case of $g=0$, $k=1$ and total ramification. More precisely, consider a vector 
$$\bfx = (d, -a_1, \ldots, -a_n) \in \mathbb{Z}^{n+1} \text{ (with }d,a_1, \ldots, a_n >0)$$
such that $|\bfx| = n-1$. In the notation of our paper, the authors study the moduli space $\mathcal{H}_0(\bfx)$ parameterizing tuples
\[
(C, q, p_1, \ldots, p_n, \eta)
\]
of a smooth genus $0$ curve $C$ with $n+1$ distinct marked points and a $k=1$ differentials $\eta$ on $C$ with zeros and poles of orders $x_i-1$ at the marked points. 
Given a fixed vector $\vec r = (r_1, \ldots , r_n) \in \mathbb{C}^{n} \setminus \{0\}$ satisfying $\sum r_i=0$, the subset $\mathcal{H}_0(\bfx)^{\vec r} \subseteq \mathcal{H}_0(\bfx)$ where the differential $\eta$ has residue $r_i$ at $p_i$ is a finite set. The papers above count the number of points in dependence of $\vec r$. As an example, for a general vector $\vec r$ the count is given by the formula
\[
|\mathcal{H}_0(\bfx)^{\vec r}| = (d-1) \cdot (d-2) \cdots (d-(n-2)).
\]
On the other hand, in \cite{Cavalieri2023-One-part-leaky} we show that the $k$-leaky double Hurwitz number is given by
\[
\mathrm{H}_0(\bfx) = (n-1)! (d-\frac{1}{2})\cdot (d-\frac{2}{2}) \cdots (d-\frac{n-2}{2}).
\]
While the formulas do not agree, they bear a strong resemblance and share structural properties (such as being a polynomial of degree $n-2$ in the entries of $\bfx$). It seems interesting to explore whether the setup in \cite{GendronTahar, BuryakRossiCount, ChenPrado} can be modified to give an enumerative interpretation for the number $\mathrm{H}_0(\bfx)$. This could also show a path to extending their counting problem to higher genus, where fixing the residues at poles no longer cuts the dimension to zero.

\subsection{Acknowledgments}
We would like to thank Dawei Chen, Sam Molcho, Adrien Sauvaget for useful discussions. We are especially grateful to Dhruv Ranganathan for his substantive contributions during the early stages of the project.

Part of this work was pursued during the first author's academic visit at ETH's  Institute for Mathematical Research (FIM), whose support is gratefully acknowledged. The first author was supported by NSF DMS-2100962.
The second author acknowledges support by the Deutsche Forschungsgemeinschaft (DFG, German Research Foundation), Project-ID 286237555, TRR 195.

\section{Background}

\subsection{Tropical Curve Counts}
{An \emph{abstract tropical} \emph{curve} is a connected metric graph $\Gamma$, such that edges leading to leaves (called \emph{ends}) have infinite length, together with a genus function $g:\Gamma\rightarrow \ZZ_{\geq 0}$ with finite support. Locally around a point $p$, $\Gamma$ is homeomorphic to a star with $r$ halfrays. 
The number $r$ is called the \emph{valence} of the point $p$ and denoted by $\val(p)$. The \emph{minimal vertex set} of $\Gamma$ is defined to be the points where the genus function is nonzero, together with points of valence different from $2$. The vertices of valence greater than $1$ are called  \textit{inner vertices}. Besides \emph{edges}, we introduce the notion of \emph{flags} of $\Gamma$. A flag is a pair $(v,e)$ of a vertex $v$ and an edge $e$ incident to it ($v\in \partial e$). Edges that are not ends are required to have finite length and are referred to as \emph{bounded} or \textit{internal} edges.

A \emph{marked tropical curve} is a tropical curve whose leaves are labeled. An isomorphism of a tropical curve is an isometry respecting the leaf markings and the genus function. The \emph{genus} of a tropical curve $\Gamma$ is given by
\[
g(\Gamma) = h_1(\Gamma)+\sum_{p\in \Gamma} g(p).
\]
A curve of genus $0$ is called \emph{rational} and a curve satisfying $g(p)=0$ for all $p\in \Gamma$ is called \emph{explicit}. The \emph{combinatorial type} is the equivalence class of tropical curves obtained by identifying any two tropical curves which differ only by edge lengths.

We want to examine covers of $\RR$ by graphs up to additive translation,  and  equip  $\RR$ with a polyhedral subdivision to ensure the result is a map of metric graphs (see e.g.\ Section 5.4 and \ Figure 3 in \cite{MW17}). A \textit{metric line graph} is any metric graph obtained from a polyhedral subdivision of $\RR$. The metric line graph determines the polyhedral subdivision up to translation. We fix an orientation of a metric line graph going from left to right (i.e.\ from negative values in $\RR$ to positive values).

\begin{definition}[Leaky cover, \cite{CMR22}] \label{Def:leakycov}
Let $\pi:\Gamma\rightarrow T$ be a surjective map of metric graphs where $T$ is a metric line graph. We require that $\pi$ is piecewise integer affine linear, the slope of $\pi$ on a flag or edge $e$ is a positive integer called the \emph{expansion factor} $\omega(e)\in \NN_{> 0}$. 

For a vertex $v\in \Gamma$, the \emph{left (resp.\ right) degree of $\pi$ at $v$} is defined as follows. Let $f_l$ be the flag of $\pi(v)$ in $T$ pointing to the left and $f_r$ the flag pointing to the right. Add the expansion factors of all flags $f$ adjacent to $v$ that map to $f_l$ (resp.\ $f_r$):
\begin{equation}
d_v^l=\sum_{f\mapsto f_l} \omega(f), \;\;\;\; d_v^r=\sum_{f\mapsto f_r} \omega(f).
\end{equation} 

\noindent
The map $\pi:\Gamma\rightarrow T$ is called a \emph{$k$-leaky cover} if for every $v\in \Gamma$
\begin{equation}\label{eq:leakybal}d_v^l-d_v^r= k(2g(v)-2+\val(v)).\end{equation}
\end{definition}

\begin{remark}[Vertex sets]\label{rem-vertexsets}
We impose a stability condition: A $k$-leaky cover $\Gamma\to T$ is called stable if the preimage of every vertex of $T$ contains a vertex of $\Gamma$ in its preimage which is of genus greater than $0$ or valence greater than $2$. 

Furthermore, we often stabilize the source tropical curve further by passing to its minimal vertex set (containing only the points where the genus function is nonzero, together with points of valence different from $2$). In that way, we lose the property that the cover is a map of graphs, however, this vertex structure is relevant to determine valencies correctly for the purpose of Psi-conditions.
\end{remark}

\begin{definition}[Left and right degree]
The \emph{left (resp.\ right) degree}  of a leaky cover is the tuple of expansion factors of its ends mapping asymptotically to $-\infty$ (resp.\ $+\infty$). 
The tuple is indexed by the labels of the ends mapping to  $-\infty$ (resp.\ $+\infty$).  When the order imposed by the labels of the ends plays no role, we drop the information and treat the left and right degree only as a multiset. 
\end{definition}
By convention, we denote the left degree by $\mathbf{x}^{+}$ and the right degree by $\mathbf{x}^{-}$. In the right degree, we use negative signs for the expansion factors, in the left degree positive signs. We also merge the two to one vector which we denote $\bfx=(x_1,\ldots,x_n)$  called the \emph{degree}. The labeling of the ends plays a role: the expansion factor of the end with the label $i$ is $x_i$. In $\mathbf{x}$, we  distinguish the expansion factors of the left ends from those of the right ends by their sign.
An Euler characteristic calculation, combined with the leaky cover condition, shows that 
$$ \sum_{i=1}^n x_i=k\cdot(2g-2+n), $$
where $g$ denotes the genus of $\Gamma$.

An automorphism of a leaky cover is an automorphism of $\Gamma$ compatible with $\pi$.}


\subsection{Insertions from the stack of expansions}
The stack $\mathsf{Ex}$ of expansions has been studied extensively in the context of Gromov-Witten theory (see \cite{Li2001Stable-morphism, Li2002A-degeneration-, Abramovich2013Expanded-degene}). 
In Definition \ref{def-hgxe}, we defined the branch class $\Br_g^c(\bfx)$ as a pull-back of a codimension $c$ class on $\mathsf{Ex}$ under a map $\mathsf{t} : \widehat{\mathcal{M}}^\bfx_{g,n} \to \mathsf{Ex}$. We start by giving a concrete description of this map $\mathsf{t}$ and its claimed properties from the introduction.

\begin{proposition} \label{pro:tmap}
Let $\widehat{\mathcal{M}}^\bfx_{g,n}$ be the blowup of $\Mbar_{g,n}$ associated to the vector $\bfx$ and a small non-degenerate stability condition (see \cite[Section 4]{Holmes_Log_DR}). Then it admits a map $\mathsf{t} : \widehat{\mathcal{M}}^\bfx_{g,n} \to \mathsf{Ex}$ to the stack of expansions such that for the inclusion $\iota: \Mbar^{\sim}_{g,\bfx} \to \widehat{\mathcal{M}}^\bfx_{g,n}$ of the log double-ramification locus, we have
\begin{itemize}
    \item the diagram in Figure \ref{fig:maptoex} commutes,
    \item the tropicalization of the composition $\mathsf{t} \circ \iota$ is the map $\leak\to\mathsf{tEx}$ sending a tropical cover $\Gamma \to \mathbb{R}$ to the induced subdivision of its target $\mathbb{R}$ at the images of vertices of $\Gamma$.
\end{itemize}
\end{proposition}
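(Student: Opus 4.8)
The plan is to produce the map $\mathsf{t}$ by factoring the morphism to the stack of expansions that already exists on the double ramification locus, and then check the tropical statement by a local computation on cones. First I would recall, following Holmes' construction of the universal double ramification locus (see \cite{Holmes_Log_DR, Holmes2017Extending-the-d}), that the log blowup $\widehat{\mathcal{M}}^\bfx_{g,n}$ is built precisely so that the predeformable/rubber-stable map structure extends over it: on the rubber space $\Mbar^{\sim}_{g,\bfx}$ there is a tautological map $\mathsf{b}$ to $\mathsf{Ex}$ recording the target expansion of the rubber map, and the point of the non-degenerate stability condition is that the combinatorial data defining $\mathsf{b}$ (the partial ordering/piecewise-linear function on the tropicalization that records the expansion) is already defined on all of $\widehat{\mathcal{M}}^\bfx_{g,n}$, not just on the locus where a map to rubber exists. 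Concretely, $\mathsf{Ex}$ is a log stack whose tropicalization $\mathsf{tEx}$ is the moduli of subdivisions of $\mathbb{R}$ (chains of segments with the two ends marked $0,\infty$), and a map to $\mathsf{Ex}$ from a log smooth stack is equivalent to a choice of such a subdivision functorially in the tropicalization — i.e.\ a piecewise-linear function with integral slopes on the cone complex, up to the relevant equivalence. The stability condition on $\widehat{\mathcal{M}}^\bfx_{g,n}$ picks out exactly such a function (this is the function whose non-negativity region cuts out $\widehat{\DR}$), giving $\mathsf{t}$.

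The next step is to verify the two bullet points. For commutativity of Figure \ref{fig:maptoex}: by construction $\mathsf{t} \circ \iota$ and $\mathsf{b}$ are both determined by the same piecewise-linear datum on $\Mbar^{\sim}_{g,\bfx}$ — the one coming from the rubber target — so they agree; this is essentially the statement that $\iota$ is a log morphism compatible with the tautological expansions, which is built into the construction of $\iota$ in \cite{Holmes2017Extending-the-d}. For the tropical statement, I would tropicalize everything: a point of $\leak$ is a tropical $k$-leaky cover $\pi : \Gamma \to \mathbb{R}$, and tropicalizing the rubber map $\mathsf{b}$ sends $\pi$ to the target metric line graph $T$, i.e.\ to the subdivision of $\mathbb{R}$ induced by marking the images $\pi(v)$ of the vertices $v$ of $\Gamma$ — this is literally how the target expansion of a degenerating rubber map tropicalizes. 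Since $\mathsf{t}^{\trop} \circ \iota^{\trop} = \mathsf{b}^{\trop}$ by the commuting square, this identifies $(\mathsf{t} \circ \iota)^{\trop}$ with the claimed subdivision map, and one only needs to check this cone by cone (for each combinatorial type of $\Gamma$), where it is the evident linear map sending edge lengths and slopes to the lengths of the segments of $T$.

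The main obstacle I expect is the first step: making precise in what sense the map $\mathsf{t}$ exists on all of $\widehat{\mathcal{M}}^\bfx_{g,n}$ rather than only on the rubber locus, and that it is independent (up to the direct-limit identifications in $\logCH$) of the choices involved. Here one must invoke the functoriality of $\mathsf{Ex}$ as a log algebraic stack — a strict morphism $Y \to \mathsf{Ex}$ is the same as an "expanded degeneration datum" on $Y$, equivalently a suitable bounded piecewise-linear function on the tropicalization $\Sigma_Y$ — and then observe that the non-degenerate stability condition of \cite{Holmes_Log_DR, Holmes2017Extending-the-d} produces exactly such a function on $\Sigma_{\widehat{\mathcal{M}}^\bfx_{g,n}}$, since that is how the blowup is defined in the first place. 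Checking that this function has integral slopes and satisfies the cone-complex compatibilities needed to descend to an honest map to $\mathsf{Ex}$ (as opposed to a map to some further blowup of $\mathsf{Ex}$) is the technical heart; once that is in hand, the commuting square and the tropicalization computation are formal. I would also remark that it suffices to construct $\mathsf{t}$ after possibly passing to a further blowup, since we only ever use $\mathsf{t}^*$ of classes on $\mathsf{Ex}$ inside $\logCH^*(\Mbar_{g,n})$, which makes the independence-of-choices point automatic.
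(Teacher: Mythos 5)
Your overall route is the paper's: since $\mathsf{Ex}$ is its own Artin fan, producing $\mathsf{t}$ amounts to producing a morphism of cone stacks from the tropicalization $\widehat\Sigma^{\bfx}_{g,n}$ of $\widehat{\mathcal{M}}^\bfx_{g,n}$ to $\mathsf{tEx}$, and both bullet points then reduce to statements about tropicalizations (the paper likewise proves commutativity of the square simply by noting that two maps to $\mathsf{Ex}$ agree if and only if their tropicalizations do). The gap is in the one substantive step, the construction of that cone-stack map. You describe the required datum as ``a piecewise-linear function with integral slopes on the cone complex'' and identify it with the function whose non-negativity locus cuts out $\widehat{\DR}_g(\bfx)$. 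That is not the right datum: a family of subdivisions of $\mathbb{R}$, i.e.\ a map to $\mathsf{tEx}$, is not a single PL function on the base cone complex — on each cone it is an ordered tuple of linear forms recording the lengths of the bounded segments of the target (equivalently the images of all vertices, up to translation), compatible under face maps. The single function attached to the stability condition cannot encode this, and you explicitly defer the verification (``the technical heart'') rather than carry it out; so the existence of $\mathsf{t}$, which is the main content of the proposition, is asserted rather than proved. Your closing remark that one may pass to a further blowup also weakens the statement: the proposition claims $\mathsf{t}$ exists on $\widehat{\mathcal{M}}^\bfx_{g,n}$ itself.

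What closes this gap in the paper is the explicit description of $\widehat\Sigma^{\bfx}_{g,n}$ from \cite[Section 4.2.2]{Holmes_Log_DR}: its cones are indexed by tuples $(\widehat\Gamma, D, I)$ with $I$ an acyclic flow, so each cone is canonically a moduli cone of tropical covers $\widehat\Gamma \to \mathbb{R}$ with slope $I(e)$ on each edge; recording the images of the stable vertices gives the subdivision of $\mathbb{R}$, and one checks that this assignment is compatible with specialization, yielding $\widehat\Sigma^{\bfx}_{g,n} \to \mathsf{tEx}$ and hence $\mathsf{t}$ by passing to Artin fans. Once $\mathsf{t}$ is in hand, your treatment of the two bullets — commutativity because both maps to $\mathsf{Ex}$ have the same tropicalization, and the identification of $(\mathsf{t}\circ\iota)^{\mathrm{trop}}$ with the subdivision map cone by cone on the stable sub-complex $\leak \subseteq \widehat\Sigma^{\bfx}_{g,n}$ — matches the paper's argument.
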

\begin{proof}
The tropicalization $\widehat \Sigma_{g,n}^\bfx$ of the space $\widehat{\mathcal{M}}^\bfx_{g,n}$ associated to a small, nondegenerate stability condition $\theta$ has been described in \cite[Section 4.2.2]{Holmes_Log_DR}. Its cones are indexed by tuples $(\widehat \Gamma, D, I)$ where $\widehat \Gamma$ is a quasi-stable graph, $D$ is a $\theta$-stable divisor on $\widehat \Gamma$ and $I$ is an acyclic flow on $\widehat \Gamma$ with $\mathrm{div}(I) = \underline{deg}((\omega^\textup{log})^k(-\sum a_i x_i)) - D$. 
The cone $\sigma_{(\widehat \Gamma, D, I)}$ associated to this tuple parameterizes tropical covers from (a tropical curve with underlying graph) $\widehat \Gamma$ to $\mathbb{R}$ having slope $I(e)$ on each edge $e$ of $\widehat \Gamma$. Taking the image of the stable vertices of $\widehat \Gamma$ in $\mathbb{R}$ defines a subdivision of the real line, i.e. an element of the tropicalization $\mathsf{tEx}$. One can verify that this operation defines a morphism
\begin{equation} \label{eqn:t_trop_def}
\widehat \Sigma_{g,n}^\bfx \to \mathsf{tEx}
\end{equation}
of cone stacks. Since $\mathsf{Ex} = \mathcal{A}_{\mathsf{tEx}}$ is its own Artin fan, we can define the map
\[
\mathsf{t} : \widehat{\mathcal{M}}^\bfx_{g,n} \to \mathcal{A}_{\widehat \Sigma_{g,n}^\bfx} \to \mathcal{A}_{\mathsf{tEx}} = \mathsf{Ex}
\]
as the composition of the map from $\widehat{\mathcal{M}}^\bfx_{g,n}$ to its Artin fan, with the morphism of Artin fans induced from the cone stack map \eqref{eqn:t_trop_def}. The tropicalization $\leak$ is the sub-complex of $\widehat \Sigma_{g,n}^\bfx$ where the curve $\widehat \Gamma$ is stable, and on there the map to $\mathsf{tEx}$ continues to record the edge lengths of the subdivided target 
$\mathbb{R}$. This shows the second bullet point, and for the first we simply observe that two maps to $\mathsf{Ex}$ are equal if and only if their tropicalizations coincide (again since $\mathsf{Ex}$ is its own Artin fan).
\end{proof}

In our paper, the only type of insertion from $\CH_*(\mathsf{Ex})$ that we consider is the fundamental class $[T_c] \in \CH^c(\mathsf{Ex})$ of the codimension $c$ boundary stratum of $\mathsf{Ex}$. Further natural insertions would be the cotangent line classes $\Psi_0, \Psi_\infty$ of the expanded target at $0, \infty$. For $\pi: \widehat{\mathcal{M}}^\bfx_{g,n} \to \Mbar_{g,n}$ the blow-up on which $\logDR_g(\bfx)$ is supported, a conjectural description for the class
\[
\pi_*\left(\logDR_g(\bfx) \cdot \mathsf{t}^* \Psi_\infty^u \right) \in \CH^{g+u}(\Mbar_{g,n})
\]
was proposed in \cite[Conjecture 1.4]{2022arXiv221204704C}. This formula would allow to calculate the intersection numbers of $\logDR_g(\bfx)$ against both powers of $\Psi_\infty$ and further $\psi$-classes in examples. However, for now we restrict our attention to the insertions $[T_c]$ mentioned above.

\section{Tropical leaky descendants}

\begin{definition}[Psi-conditions for leaky covers] \label{def:psi}
Let $g,n \geq 0$ such that $2g-2+n > 0$ and consider vectors $\bfx \in \mathbb{Z}^n$ such that $|\bfx| = k (2g-2+n)$ for some $k \in \mathbb{Z}$. Let $\bfe \in \mathbb{Z}_{\geq 0}^n$ such that $0 \leq |\bfe| \leq 2g-3+n$.

Let $\pi:\Gamma\rightarrow T$ be a $k$-leaky cover.
For a vertex $v$, let $I_v\subset \{1,\ldots,n\}$ be the subset of ends adjacent to $v$ after passing to the minimal vertex set of $\Gamma$ (see Remark \ref{rem-vertexsets}).

  We say that $\pi:\Gamma\rightarrow T$  {\it satisfies the Psi-conditions} $\mathbf{e}$
  if for all vertices $v$ of $\Gamma$,
  \begin{equation} \label{eq:psicondition}
  \val(v)=\sum_{i\in I_v}e_i+3-2g(v).\end{equation}
\end{definition}

\begin{example}
    For an example of three $1$-leaky tropical covers of genus $1$ and degree $\bfx=(7,-3,-1)$ satisfying the Psi-conditions $\bfe=(1,0,0)$, see Figure \ref{fig:exleakyPsicover}.
\end{example}

\begin{figure}[tb]
    \centering

\tikzset{every picture/.style={line width=0.75pt}} 

\begin{tikzpicture}[x=0.75pt,y=0.75pt,yscale=-1,xscale=1]

\draw    (120,310) -- (180,310) ;
\draw    (180,310) .. controls (208,300.22) and (234.5,305.72) .. (250,320) ;
\draw    (180,310) .. controls (199.5,323.72) and (225,329.22) .. (250,320) ;
\draw    (180,310) .. controls (204,294.22) and (312,299.22) .. (330,300) ;
\draw    (250,320) -- (330,320) ;
\draw    (120,390) -- (180,390) ;
\draw  [fill={rgb, 255:red, 0; green, 0; blue, 0 }  ,fill opacity=1 ] (180,390) .. controls (180,388.76) and (181.01,387.75) .. (182.25,387.75) .. controls (183.49,387.75) and (184.5,388.76) .. (184.5,390) .. controls (184.5,391.24) and (183.49,392.25) .. (182.25,392.25) .. controls (181.01,392.25) and (180,391.24) .. (180,390) -- cycle ;
\draw    (182.25,390) -- (250,390) ;
\draw    (250,390) -- (330,370) ;
\draw    (250,390) -- (330,410) ;
\draw    (120,242) -- (180,242) ;
\draw    (180,242) .. controls (208,232.22) and (234.5,237.72) .. (250,252) ;
\draw    (180,242) .. controls (199.5,255.72) and (225,261.22) .. (250,252) ;
\draw    (180,242) .. controls (204,226.22) and (312,231.22) .. (330,232) ;
\draw    (250,252) -- (330,252) ;

\draw (339,312) node [anchor=north west][inner sep=0.75pt]   [align=left] {$\displaystyle 3$};
\draw (339,290) node [anchor=north west][inner sep=0.75pt]   [align=left] {$\displaystyle 1$};
\draw (249,302) node [anchor=north west][inner sep=0.75pt]   [align=left] {$\displaystyle 1$};
\draw (211,330) node [anchor=north west][inner sep=0.75pt]   [align=left] {$\displaystyle 3$};
\draw (141,292) node [anchor=north west][inner sep=0.75pt]   [align=left] {$\displaystyle 7$};
\draw (339,402) node [anchor=north west][inner sep=0.75pt]   [align=left] {$\displaystyle 3$};
\draw (341,360) node [anchor=north west][inner sep=0.75pt]   [align=left] {$\displaystyle 1$};
\draw (221,370) node [anchor=north west][inner sep=0.75pt]   [align=left] {$\displaystyle 5$};
\draw (131,370) node [anchor=north west][inner sep=0.75pt]   [align=left] {$\displaystyle 7$};
\draw (339,244) node [anchor=north west][inner sep=0.75pt]   [align=left] {$\displaystyle 3$};
\draw (339,222) node [anchor=north west][inner sep=0.75pt]   [align=left] {$\displaystyle 1$};
\draw (249,234) node [anchor=north west][inner sep=0.75pt]   [align=left] {$\displaystyle 2$};
\draw (211,262) node [anchor=north west][inner sep=0.75pt]   [align=left] {$\displaystyle 2$};
\draw (141,224) node [anchor=north west][inner sep=0.75pt]   [align=left] {$\displaystyle 7$};

\begin{scope}[yshift=-60pt] 

\draw    (120,538) -- (180,538) ;
\draw    (180,538) .. controls (208,528.22) and (234.5,533.72) .. (250,548) ;
\draw    (180,538) .. controls (199.5,551.72) and (225,557.22) .. (250,548) ;
\draw    (180,538) .. controls (204,522.22) and (312,527.22) .. (330,528) ;
\draw    (250,548) -- (330,548) ;

\draw (339,540) node [anchor=north west][inner sep=0.75pt]   [align=left] {$\displaystyle 1$}; 
\draw (339,518) node [anchor=north west][inner sep=0.75pt]   [align=left] {$\displaystyle 3$}; 
\draw (249,530) node [anchor=north west][inner sep=0.75pt]   [align=left] {$\displaystyle 1$};
\draw (211,558) node [anchor=north west][inner sep=0.75pt]   [align=left] {$\displaystyle 1$};
\draw (141,520) node [anchor=north west][inner sep=0.75pt]   [align=left] {$\displaystyle 7$};

\end{scope}

\begin{scope}[yshift=0pt] 

\draw    (120,538) -- (180,538) ;

\draw    (180,538) .. controls (204,522.22) and (312,527.22) .. (330,528) ;

\draw    (180,538) .. controls (204,522.22) and (312,527.22) .. (330,528) ;

\draw    (180,538) -- (330,548) ;
\draw    (180,538) -- (250,565) ;

\draw (339,540) node [anchor=north west][inner sep=0.75pt]   [align=left] {$\displaystyle 1$}; 
\draw (339,518) node [anchor=north west][inner sep=0.75pt]   [align=left] {$\displaystyle 3$}; 
\draw (211,558) node [anchor=north west][inner sep=0.75pt]   [align=left] {$\displaystyle 1$};
\draw (141,520) node [anchor=north west][inner sep=0.75pt]   [align=left] {$\displaystyle 7$};
\draw[fill] (250,565) circle (1.5pt);

\end{scope}

\end{tikzpicture}

    \caption{Five $1$-leaky tropical covers of genus $1$ and degree $\bfx=(7,-3,-1)$ satisfying the Psi-conditions $\bfe=(1,0,0)$. The vertices marked with a dot are vertices of genus $1$. All other vertices are of genus $0$. All five pictures cover a line graph $T$ with $2$ vertices. We did not specify lengths in the picture, as the lengths in the source graph $\Gamma$ are determined by the expansion factors and the lengths in $T$.}
    \label{fig:exleakyPsicover}
\end{figure}
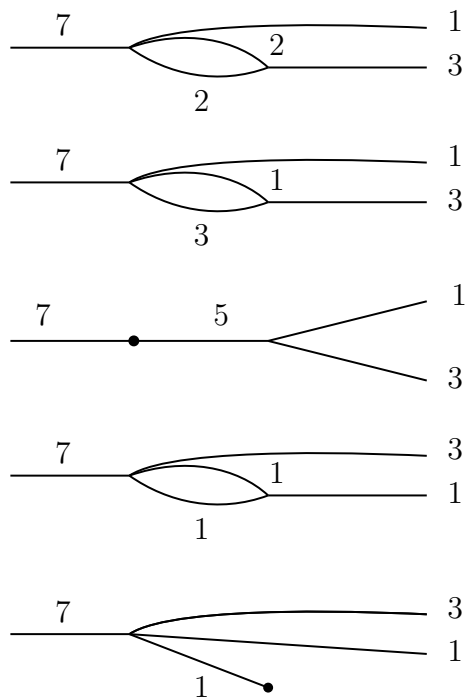

\begin{definition}[Vertex multiplicities]\label{def-multvertex}
Let $\pi:\Gamma\rightarrow T$ be a $k$-leaky cover satisfying the Psi-conditions $\bfe$.
For a vertex $v$, let $I_v\subset \{1,\ldots,n\}$ be the subset of ends adjacent to $v$ after passing to the minimal vertex set of $\Gamma$ (see Remark \ref{rem-vertexsets}).
   Let $\mathbf{x}(v)$ denote the vector containing the (left and right) local degree of $v$, let $g(v)$ denote the genus of $v$.

   We define the \emph{vertex multiplicity} to be

   $$\mult_v:= \int_{\overline{M}_{g(v),\val(v)}} \DR_{g(v)}(\mathbf{x}(v))\cdot \prod_{i\in I_v} \psi_i^{e_i}$$
\end{definition}

\begin{example}\label{ex-vertexmult}
    For the $3$-valent genus $0$ vertices in the covers of Figure \ref{fig:exleakyPsicover}, the vertex multiplicity is $1$. This is true because in genus $0$, $\DR$ equals $\Mbar_{0,3}$ which is just a point. For the $4$-valent vertices of genus $0$, it is still true that $\DR$ equals $\Mbar_{0,4}$, but now we take the integral over $\psi_1$. This is also just a point, so again, these vertex multiplicities are $1$. The genus $1$ vertex may be evaluated using the software {\tt admcycles} \cite{admcycles} to obtain 
    $\int_{\Mbar_{1,2}} \DR_1(7,-5)\cdot \psi_1 = 35/24$.
\end{example}



\begin{definition}[Count of $k$-leaky covers satisfying Psi-conditions]\label{def-trophgxe}
   Let $g,n \geq 0$ such that $2g-2+n > 0$ and consider vectors $\bfx \in \mathbb{Z}^n$ such that $|\bfx| = k (2g-2+n)$ for some $k \in \mathbb{Z}$. Let $\bfe \in \mathbb{Z}_{\geq 0}^n$ such that $0 \leq |\bfe| \leq 2g-3+n$, and  $b=2g-3+n-|\bfe|\geq 0$.


We define
\begin{equation}
  \mathrm{H}^{\trop}_g(\bfx, \bfe) = \sum_\pi \frac{1}{|\Aut(\pi)|}\cdot \prod_e \omega(e) \cdot \prod_v \mult_v,  
\end{equation}
where:
\begin{itemize}
    \item $\pi:\Gamma\rightarrow T$ ranges among all leaky covers of degree $\mathbf{x}$ and genus $g$ (Definition \ref{Def:leakycov}) and satisfying the Psi-conditions $\mathbf{e}$ (Definition \ref{def:psi});
    \item the first product is over all bounded edges of $\Gamma$ (according to its minimal vertex set, see Remark \ref{rem-vertexsets});
    \item the second product goes over the set of vertices of $\Gamma$ and $\mult_v$ is as in Definition \ref{def-multvertex}.
\end{itemize}

\end{definition}

\begin{example}\label{ex-count}
Fix the leaking $1$, $n=3$ ends, genus $1$, degree $\bfx=(7,-3,-1)$ and the Psi-conditions $\bfe=(1,0,0)$. Then the  covers we have to consider are  depicted in Figure \ref{fig:exleakyPsicover}. 
  For these three $1$-leaky covers  we obtain the following multiplicities, using Example \ref{ex-vertexmult} discussing vertex multiplicities: 

\begin{center}
\begin{tabular}{|c|c|c|c|c|}
\hline $\pi_i$    &  $1/|\Aut(\pi_i)$ & $\prod_e \omega(e)$ & $\mult_v$ & $\mult(\pi_i)$ \\
\hline $\pi_1$    & $1/2$ & $4$ & $(1,1)$ & $2$\\
\hline $\pi_2$    & $1$ & $3$ & $(1,1)$ & $3$\\
\hline $\pi_3$    & $1$ & $5$ & $(35/24,1)$ & $175/24$\\
\hline $\pi_4$    & $1/2$ & $1$ & $(1,1)$ & $1/2$\\
\hline $\pi_5$    & $1$ & $1$ & $(1,-1/24)$ & $-1/24$\\
\hline
\end{tabular}
\end{center}
In total we obtain
\[
H_1^{\trop}(\bfx,\bfe) = 2 + 3 + \frac{175}{24} + \frac{1}{2} - \frac{1}{24} = \frac{51}{4}.
\]
  
    


\end{example}

\begin{theorem}[Correspondence Theorem for leaky covers with Psi-conditions]\label{thm-corres}

Let $g,n \geq 0$ such that $2g-2+n > 0$ and consider vectors $\bfx \in \mathbb{Z}^n$ such that $|\bfx| = k (2g-2+n)$ for some $k \in \mathbb{Z}$. Let $\bfe \in \mathbb{Z}_{\geq 0}^n$ such that $0 \leq |\bfe| \leq 2g-3+n$.

Then the $k$-leaky double Hurwitz descendant (defined in \ref{def-hgxe}) equals the count of tropical $k$-leaky covers satisfying Psi-conditions (defined in \ref{def-trophgxe}):

$$\mathrm{H}^{\trop}_g(\bfx, \bfe)=\mathrm{H}_g(\bfx, \bfe).$$

\end{theorem}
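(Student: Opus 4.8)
```latex
\bigskip

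\noindent\textbf{Proof proposal.}

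The plan is to reduce the statement, which involves the pushforward of $\logDR_g(\bfx) \cdot \Br_g^c(\bfx)$ to $\Mbar_{g,n}$, to a purely intersection-theoretic computation on $\Mbar_{g,n}$ and then match the resulting boundary expansion with the tropical graph sum of Definition \ref{def-trophgxe}. First I would analyze the class $\mathsf{t}^*[T_c] = \Br_g^c(\bfx)$ using the description of the map $\mathsf{t}$ from Proposition \ref{pro:tmap}. The class $[T_c]$ is the fundamental class of the locus of expansions with a chain of length at least $c$; pulling it back under $\mathsf{t}$ and multiplying by $\logDR_g(\bfx)$ produces a class supported on the locus of the blowup $\widehat{\mathcal M}^\bfx_{g,n}$ where the tropicalization (a leaky cover $\Gamma \to \mathbb R$) has target subdivided into at least $c$ bounded segments. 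The key step is to express the pushforward $\pi_*(\logDR_g(\bfx) \cdot \Br_g^c(\bfx))$ as an explicit sum over boundary strata of $\Mbar_{g,n}$: each stratum corresponds to a stable graph $\Gamma$ obtained from a leaky cover by passing to the minimal vertex set, and the decoration at each vertex $v$ is the lower-dimensional double ramification cycle $\DR_{g(v)}(\bfx(v))$, while each bounded edge contributes its expansion factor $\omega(e)$ and there is a $1/|\Aut(\pi)|$ from the automorphisms of the cover. This is precisely the content sketched after the statement of Theorem \ref{thm:tropical_formula} in the introduction; the tools are the compatibility of $\logDR$ with the strata of the blowup together with the pushforward formula for $\widehat\DR$ under the chain-contraction maps, as in \cite{Holmes2017Extending-the-d}.

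Having produced such a boundary expansion for $\pi_*(\logDR_g(\bfx)\cdot\Br_g^c(\bfx))$, I would then cap with the $\psi$-classes $\psi_1^{e_1}\cdots\psi_n^{e_n}$ and integrate over $\Mbar_{g,n}$. The $\psi$-classes distribute over the vertices of each boundary stratum: using the standard formula for restriction of $\psi$-classes to boundary strata (they restrict to $\psi$-classes at the corresponding vertex, up to corrections by the classes of the attaching nodes, which vanish here for dimension reasons once one is at a top-dimensional stratum contributing to the integral), the integral over a stratum indexed by $\Gamma$ factors as a product over vertices $v$ of $\int_{\Mbar_{g(v),\val(v)}} \DR_{g(v)}(\bfx(v))\cdot \prod_{i\in I_v}\psi_i^{e_i}$, which is exactly $\mult_v$ from Definition \ref{def-multvertex}. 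The dimension bookkeeping here is what forces the Psi-condition \eqref{eq:psicondition}: a vertex contributes a nonzero integral only when $\dim\Mbar_{g(v),\val(v)} = (3g(v)-3+\val(v))$ matches $\dim\DR_{g(v)}(\bfx(v)) = 2g(v)-3+\val(v)$ minus the degree $|\bfe_{I_v}|$ of the $\psi$-insertions there minus the codimension absorbed by the edges; unwinding gives precisely $\val(v) = \sum_{i\in I_v}e_i + 3 - 2g(v)$. Strata not satisfying the Psi-conditions contribute zero, so the sum collapses onto exactly the leaky covers enumerated in Definition \ref{def-trophgxe}.

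The final step is to check that the combinatorial bookkeeping matches on the nose: that the strata of $\widehat{\mathcal M}^\bfx_{g,n}$ appearing with nonzero contribution are in bijection with stable $k$-leaky covers $\pi:\Gamma\to T$ of degree $\bfx$ and genus $g$ satisfying the Psi-conditions (via the tropicalization of Proposition \ref{pro:tmap} and the minimal-vertex-set stabilization of Remark \ref{rem-vertexsets}), that the multiplicity of a stratum is $\frac{1}{|\Aut(\pi)|}\prod_e\omega(e)$ times the vertex factors, and that no over- or under-counting occurs from the choice of $\theta$-stable divisor/flow data indexing the cones of $\widehat\Sigma_{g,n}^\bfx$. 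I expect the main obstacle to be exactly this last bookkeeping: correctly identifying the pushforward of $\logDR_g(\bfx)\cdot\Br_g^c(\bfx)$ as a \emph{weighted} sum of boundary strata --- keeping track of edge multiplicities $\omega(e)$ (which arise from the lattice index computations for the flow $I$ along the subdivided target) and automorphism factors --- and verifying that the $\psi$-class corrections genuinely vanish on all contributing strata so that the integral factors cleanly over vertices. A secondary subtlety is ensuring the argument works uniformly for arbitrary $k\in\mathbb Z$ (not just $k=0$), which is why one must use the log/blowup construction of $\logDR$ rather than the rubber-map description. Once the weighted boundary expansion is in hand, the remaining identification with $\mathrm H^{\trop}_g(\bfx,\bfe)$ is essentially a matter of comparing Definition \ref{def-trophgxe} term by term.
```
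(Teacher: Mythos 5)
Your proposal follows essentially the same route as the paper's proof: represent $\logDR_g(\bfx)\cdot\Br_g^c(\bfx)$ by the codimension-$c$ boundary strata of the blowup, push forward to $\Mbar_{g,n}$ to get a weighted sum of boundary classes with vertex $\DR$-cycles, edge expansion factors and automorphism factors, distribute the $\psi$-classes to the vertices, and let the dimension constraint enforce the Psi-conditions, so the surviving terms match Definition \ref{def-trophgxe} term by term. The only small imprecision is that the restriction of a marking $\psi$-class to a boundary stratum is exactly the $\psi$-class on the corresponding vertex factor, $\xi_\Gamma^*\psi_i=\pi_{v(i)}^*\psi_i$, with no node-class correction terms needing a vanishing argument.
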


\begin{proof}
We begin the proof by giving a geometric interpretation of the product
\begin{equation} \label{eq:lDRBR}
   \logDR_g(\mathbf{x})\cdot Br^c_g(\mathbf{x})\in \logCH^{g+c}(\Mbar_{g,n}) 
\end{equation}
of the logarithmic double ramification cycle with the {\it branch class}, i.e. the piecewise polynomial function $Br^c_g(\mathbf{x}) = \mathsf{t}^\ast[T_c]$.
The intersection class in \eqref{eq:lDRBR} is represented by the total codimension $c$ boundary of $\logDR_g(\mathbf{x})$, i.e. the sum of all boundary strata $\Delta^{\mathbf x}_\Gamma$ of codimension $c$. 
First we discuss which strata may have a non-zero intersection with the monomial $\psi^{\mathbf e} = \psi_1^{e_1}\ldots \psi_n^{e_n}$. Since $\psi$ classes are pulled back via the contraction morphism to $\Mbar_{g,n}$ we may apply projection formula:
\begin{equation}
\int_{\widehat{\mathcal{M}}^\bfx_{g,n}} \Delta^{\mathbf x}_\Gamma \cdot q^\ast(\psi^{\mathbf e})= \int_{\Mbar_{g,n}} q_\ast (\Delta^{\mathbf x}_\Gamma) \cdot \psi^{\mathbf e}.
\end{equation}

The pushforward $q_\ast(\Delta^{\mathbf x}_\Gamma)$ vanishes because of positive dimensional fibers unless the dual graph of the source graph has exactly $c+1$ vertices, each mapping to one of the vertices of the line graph of $T_c$.
In this case  we have:
\begin{equation}\label{eq:pushfwd}
    q_\ast(\Delta^{\mathbf x}_\Gamma) = m_\Gamma \
    \xi_{\Gamma\ \ast}\left(
    \prod_{v\in V(\Gamma)} \DR_{g(v)}(\mathbf{x}(v))\right),
\end{equation}
where the multiplicity $m_\Gamma$ is obtained as the product of the expansion factors on the bounded edges of $\Gamma$ divided by the order of the automorphism group of the decorated graph $\Gamma$. The vector of integers $\mathbf{x}(v)$ consists of the signed expansion factors of the half edges incident to the vertex $v$ of the graph $\Gamma$.

For $i= 1, \ldots , n$ denote by $v(i)$ the vertex of $\Gamma$
to which the $i$-th marked end is incident, and $\pi_{v(i)}$ the projection from the product $\prod_{v\in V(\Gamma)} \Mbar_{g(v), n(v)}$ onto the corresponding factor. It is a standard fact of psi classes (see e.g. \cite[Chapter XX, equation (4.30)]{ACG_Vol2}) that
\begin{equation}\label{eq:pbpsi}
  \xi_{\Gamma}^\ast \psi_i =  \pi_{v(i)}^\ast \psi_i.
\end{equation}
Defining $I_v$ to be the set of $i$ such that $v(i) = v$ and combining \eqref{eq:pushfwd} and \eqref{eq:pbpsi} we obtain:
\begin{equation}\label{eq:intersectpsidr}
 q_\ast (\Delta^{\mathbf x}_\Gamma) \cdot \psi^{\mathbf e} =   m_\Gamma \
    \xi_{\Gamma\ \ast}\left(
    \prod_{v\in V(\Gamma)} \pi_v^\ast\left(\prod_{i\in I_v} \psi_i^{e_i}\right)\DR_{g(v)}(\mathbf{x}(v))\right).  
\end{equation}
Integrating \eqref{eq:intersectpsidr} one obtains:
\begin{equation}\label{eq:integrated}
 \int_{\Mbar_{g,n}} q_\ast (\Delta^{\mathbf x}_\Gamma) \cdot \psi^{\mathbf e} =   m_\Gamma \ \prod_{v\in V(\Gamma)} \int_{\Mbar_{g(v),n(v)}}
 \DR_{g(v)}(\mathbf{x}(v))
 \prod_{i\in I_v}  \psi_i^{e_i}.     
\end{equation}

The integrals on the right hand side of $\eqref{eq:integrated}$  vanish by dimension reasons unless 
$n(v)=\sum_{i\in I_v}e_i+3-2g(v).$
In conclusion, the codimension $c$
 strata of $\logDR_g(\mathbf{x})$ that do not vanish when intersected with $\psi^{\mathbf{e}}$ are indexed by dual graphs corresponding to the leaky covers from Definition \ref{Def:leakycov}. For any one of this strata, the multiplicity of intersection given by the right hand side of \eqref{eq:integrated} equals the leaky cover multiplicity from \ref{def-trophgxe}. It follows that 
$\mathrm{H}^{\trop}_g(\bfx, \bfe)=\mathrm{H}_g(\bfx, \bfe)$, thus concluding the proof of the theorem. 
\end{proof}


\section{Splitting formulas for (logarithmic) double ramification cycles}
Below we discuss how to convert products of double ramification cycles with certain linear combinations of $\kappa$- and $\psi$-classes into a sum of boundary terms described via further double ramification cycles. These so-called splitting formulas generalize analogous results that first appeared in \cite{BSSZ, CSS}, and allow us to recursively compute double ramification descendants in terms of intersection numbers of $\DR_g(\bfx)$ against powers of $\kappa_1$ in Section \ref{Sect:drdescendant_recursion}.


\subsection{Splittings of \texorpdfstring{$\psi$}{psi}-classes}
\begin{definition}
Let $\pi : \Gamma \to T$ be a stable $k$-leaky cover with expansion factors $\omega(e)$ on its flags or edges $e$. For each vertex $v \in V(\Gamma)$ let $f_l(v), f_r(v)$ be the left and right flag of the vertex. Consider the gluing map
\begin{equation} \label{eqn:gluing_map}
    \xi_\Gamma : \Mbar_\Gamma = \prod_{v \in V(\Gamma)} \Mbar_{g(v), n(v)} \to \Mbar_{g,n}
\end{equation}
associated to the underlying stable graph of $\Gamma$. Then we define
\begin{equation}
    \DR_\pi = (\xi_\Gamma)_* \prod_{v \in V(\Gamma)} \pi_v^* \DR_{g(v)}\left((\omega(f))_{f \mapsto f_l(v)}, (-\omega(f))_{f \mapsto f_r(v)} \right) \in \CH^*(\Mbar_{g,n})\,,
\end{equation}
where $\pi_v : \Mbar_\Gamma \to \Mbar_{g(v), n(v)}$ is the projection to the factor associated to $v \in V(\Gamma)$. Furthermore we define
\begin{equation}
   \mult(\pi) =  \frac{1}{|\Aut(\pi)|}\cdot \prod_e \omega(e) \in \mathbb{Q}
\end{equation}
as the product of the expansion factors at the edges of $\Gamma$, weighted by the number of automorphisms of $\pi$.
\end{definition}
For $\ell \geq 0$ let $T_\ell = T_\ell(\vec w)$ be the metric line graph obtained by subdividing $\mathbb{R}$ at the $\ell+1$ vertices $w_0 < w_1 <  \ldots < w_\ell$. 
\begin{proposition} \label{pro:DR_psi_splitting}
Let $g,n \geq 0$ with $2g-2+n>0$ and $\bfx \in \ZZ^n$ with $|\bfx|=k(2g-2+n)$. Then for any $1 \leq s \leq n$ we have
\begin{equation} \label{eqn:psi_splitting}
\left(x_s \psi_s - \frac{k}{2g-2+n} \kappa_1 \right) \cdot \DR_g(\bfx) = \sum_{\pi: \Gamma \to T_1} \frac{\rho(\pi,s)}{2g-2+n} \cdot \mult(\pi) \cdot \DR_\pi\,,
\end{equation}
where the sum runs over stable $k$-leaky covers $\pi: \Gamma \to T_1$ with precisely two vertices $v_0, v_1$ of $\Gamma$ which map to the two vertices $w_0, w_1$ of $T_1$ (in that order), and
\[
\rho(\pi, s) = \begin{cases}
    2g(v_1)-2+n(v_1)&\text{if }s\text{ adjacent to }v_0,\\
    -(2g(v_0)-2+n(v_0))&\text{if }s\text{ adjacent to }v_1.
\end{cases}
\]
\end{proposition}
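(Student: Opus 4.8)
The plan is to reduce the statement to the known splitting formula for double ramification cycles from \cite{CSS} and then reinterpret the resulting boundary terms in the language of leaky covers. Concretely, the formula in \cite{CSS} (in the form recorded, e.g., as \cite[Proposition 7.1 or equivalent]{CSS}) expresses the product of $\DR_g(\bfx)$ with a $\psi$-class (suitably corrected by a $\kappa_1$-term) as a sum over two-vertex stable graphs $\Gamma$, with each vertex decorated by a lower-genus double ramification cycle whose twist is prescribed by a choice of integer flow along the separating edge. The first step is to write down that identity verbatim, paying attention to the normalization: the natural combination that appears is $x_s \psi_s \cdot \DR_g(\bfx)$ on one side and a sum of boundary terms plus a multiple of $\kappa_1 \cdot \DR_g(\bfx)$ on the other, and one checks that the coefficient of $\kappa_1$ is exactly $\frac{k}{2g-2+n}$ by comparing degrees (the total degree of the twisting data is $k(2g-2+n)$, and distributing $x_s$ of it to the marking $s$ forces this normalization).

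Second, I would set up the bijection between the index set of the \cite{CSS}-sum and the set of stable $k$-leaky covers $\pi : \Gamma \to T_1$ with two vertices $v_0, v_1$ mapping to $w_0 < w_1$. A two-vertex stable graph with a chosen flow (integer) on the connecting edge(s), together with the induced orientation, is precisely the combinatorial datum of such a leaky cover: the slope on each edge is the expansion factor $\omega(e)$, the leaky balancing condition \eqref{eq:leakybal} at $v_0$ and $v_1$ is exactly the condition that the flow has the correct divisor, namely $\underline{\deg}((\omega^{\log})^k(-\sum x_i p_i))$, and the requirement that $v_0$ maps strictly left of $v_1$ pins down the orientation. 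Under this dictionary, the product $\prod_{v} \pi_v^* \DR_{g(v)}(\dots)$ pushed forward along $\xi_\Gamma$ is by definition $\DR_\pi$, and the combinatorial prefactor from \cite{CSS} — a product of edge multiplicities divided by $|\Aut|$ — is exactly $\mult(\pi)$.

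Third, I would verify that the vertex-dependent coefficient coming out of \cite{CSS} matches $\rho(\pi,s)/(2g-2+n)$. The point is that in the \cite{CSS} splitting, each two-vertex term is weighted by a factor that measures "how much of the marking $s$'s $\psi$-class is accounted for at its own vertex versus the other vertex"; after the $\kappa_1$-correction is moved to the left-hand side, what remains is, for the vertex $v$ not containing $s$, a factor proportional to $2g(v)-2+n(v)$, with a sign depending on whether $s$ sits to the left ($v_0$) or the right ($v_1$) of the node. Checking the sign and the absence of an extra factor is a short computation using $(2g(v_0)-2+n(v_0)) + (2g(v_1)-2+n(v_1)) = 2g-2+n$ (valence/genus additivity for a one-edge split), and this is where I expect the only real bookkeeping to lie. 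The main obstacle is therefore purely normalization/sign matching between the conventions of \cite{CSS} and the leaky-cover conventions fixed in Section 2 — there is no new geometric input, but the translation must be done carefully, in particular keeping track of the left-to-right orientation of $T_1$ and the convention that right-end expansion factors carry negative signs in $\bfx(v)$.
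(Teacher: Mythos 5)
Your plan assumes that \cite{CSS} already contains a splitting formula in the $\kappa_1$-corrected form -- i.e.\ an identity expressing $x_s\psi_s\cdot\DR_g(\bfx)$ minus a multiple of $\kappa_1\cdot\DR_g(\bfx)$ as a two-vertex boundary sum with vertex-dependent weights -- so that all that remains is a dictionary between the CSS index set and leaky covers plus a normalization check. That is not what \cite{CSS} provides: the relevant statement there (Proposition 3.1) is a splitting of $(x_s\psi_s - x_t\psi_t)\cdot\DR_g(\bfx)$ for \emph{two markings} $s,t$, and its coefficients are just $0,\pm 1$, not factors of the form $2g(v)-2+n(v)$. Consequently your third step, where you claim the factor $\rho(\pi,s)$ ``comes out of \cite{CSS}'' after moving the $\kappa_1$-correction to the left-hand side, has no source for either the $\kappa_1$ term or the vertex factors; and your degree-counting heuristic for the coefficient $\tfrac{k}{2g-2+n}$ is not an argument. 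In effect the proposal begs the question: the statement to be proved is treated as already present in the reference up to translation of conventions.

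The missing idea is the auxiliary-marking trick that constitutes the actual content of the paper's proof. One pulls back along the forgetful map $F:\Mbar_{g,n+1}\to\Mbar_{g,n}$, using $F^*\DR_g(\bfx)=\DR_g(x_1,\ldots,x_n,k)$ (the extra marking carries weight $k$ and hence does not affect the balancing/twist data), applies the two-marking CSS splitting with $t=n+1$ to get $(x_s\psi_s-k\psi_{n+1})\cdot F^*\DR_g(\bfx)$ as a boundary sum with coefficients $f_{s,n+1}\in\{0,\pm1\}$, and then multiplies by $\psi_{n+1}$ and pushes forward along $F$. The identities $F_*(\psi_s\psi_{n+1})=(2g-2+n)\psi_s$ and $F_*(\psi_{n+1}^2)=\kappa_1$ are what produce the $\kappa_1$ term and the overall factor $2g-2+n$, while the pushforward of $\psi_{n+1}$ restricted to the vertex carrying the marking $n+1$ is what produces the factor $2g(v_{n+1})-2+n(v_{n+1})$, i.e.\ $|\rho(\pi,s)|$; the sign comes from $f_{s,n+1}$, and one must also check that terms with $n+1$ on a vertex destabilized by forgetting it vanish, and that $\mult(\widehat\pi)=\mult(\pi)$ under the correspondence $\widehat\pi\leftrightarrow(\pi,v_{n+1})$. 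Without this step (or some substitute producing the $\kappa_1$ and the $2g(v)-2+n(v)$ factors from the literature you cite), your bookkeeping in steps two and three has nothing to book-keep, so the proposal as written does not constitute a proof.
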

\begin{proof}
For $k=0$ and $a_s \neq 0$ this statement was proven in \cite[Theorem 4]{BSSZ}. To prove the general case, consider the vector $\widehat \bfx = (x_1, \ldots, x_n, k)$ associated to a double ramification cycle with an additional free marking. For the forgetful map $F: \Mbar_{g,n+1} \to \Mbar_{g,n}$ we have
\[
F^* \DR_g(\bfx) = \DR_g(\widehat \bfx) \in \CH^g(\Mbar_{g,n+1})\,.
\]
Applying \cite[Proposition 3.1]{CSS} to this extended double ramification cycle (with $s=s, t=n+1$ in the notation of \cite{CSS}) we obtain
\begin{align} \label{eqn:psihat_splitting}
(x_s \psi_s - k \psi_{n+1}) \cdot F^* \DR_g(\bfx) = \sum_{\widehat \pi: \Gamma \to T_1} f_{s,n+1}(\widehat \pi) \cdot \mult(\widehat \pi) \cdot \DR_{\widehat \pi}\,,
\end{align}
where $\widehat \pi$ runs over $(n+1)$-pointed $k$-leaky covers with exactly one vertex $v_0, v_1$ over each of the vertices $w_0, w_1 \in T_1$ and
\[
f_{s,t}(\widehat \pi) = \begin{cases}
    0 & \text{ if $s$ and $n+1$ are adjacent to the same vertex},\\
    1 & \text{ if $s$ is adjacent to $v_0$ and $n+1$ to $v_1$},\\
    -1 & \text{ otherwise}.
\end{cases}
\]
We claim that equation \eqref{eqn:psi_splitting} follows by multiplying boths sides of \eqref{eqn:psihat_splitting} with $\psi_{n+1}$ and pushing forward under the forgetful map $F$. Indeed, using that
\[
F_*(\psi_s \psi_{n+1}) = (2g-2+n) \cdot \psi_s \text{ and } F_*(\psi_{n+1}^2) = \kappa_1\,,
\]
one sees that applying $F_*(\psi_{n+1} \cdot -)$ to the left-hand side of \eqref{eqn:psihat_splitting} gives $(2g-2+n)$ times the left-hand side of \eqref{eqn:psi_splitting}.

For the comparison of the right-hand sides, we first note that for any stable $k$-leaky cover $\widehat \pi$ such that marking $n+1$ lies on a vertex $v$ with $g(v)=0, n(v)=3$ that becomes unstable under forgetting $n+1$, we have $\psi_{n+1} \cdot \DR_{\widehat \pi} = 0$ for dimension reasons. The remaining covers $\widehat \pi$ appearing in the summation are in bijective correspondence to the tuples $(\pi, v_{n+1})$ recording the cover $\pi$ obtained by forgetting marking $n+1$ and the choice $v_{n+1} \in \{v_0, v_1\}$ of the vertex where this marking was attached. Indeed, the fact that marking $n+1$ carries weight $k$ precisely means that its position on the graph $\Gamma$ does not influence the balancing condition on the two vertices. Note also that $\mult(\widehat \pi) = \mult(\pi)$ is preserved under this correspondence.

To conclude the proof, observe that for any cover $\pi$ appearing on the right-hand side of \eqref{eqn:psi_splitting} there is precisely one choice of $v_{n+1}$ such that the corresponding lift $\widehat \pi = (\pi, v_{n+1})$ satisfies $f_{s,n+1}(\widehat \pi) \neq 0$ (namely $v_{n+1}=v_1$ for $s$ adjacent to $v_0$ in $\pi$, and $v_{n+1}=v_0$ otherwise). Then indeed the map $F_*(\psi_{n+1} \cdot -)$ sends the right-hand side of \eqref{eqn:psihat_splitting} to $(2g-2+n)$ times the right-hand side of \eqref{eqn:psi_splitting}. Here the sign of the factor $\rho(\pi,s)$ comes from $f_{s,n+1}(\widehat \pi)$ and its absolute value $2g(v_{n+1})-2+n(v_{n+1})$ comes from the forgetful pushforward of the class $\psi_{n+1}$ on the vertex $v_{n+1}$.
\end{proof}

\begin{remark}
It is an interesting question how to lift equation \eqref{eqn:psi_splitting} to a splitting formula for the logarithmic double ramification cycle $\logDR_g(\bfx)$. We expect that the right-hand side of \eqref{eqn:psi_splitting} generalizes by allowing arbitrary stable $k$-leaky covers $\pi$, with the associated contribution $\logDR_\pi$ given as a suitable log-boundary pushforward of logarithmic double ramification cycles on the vertices of $\Gamma$. The associated language of log-boundary pushforwards is currently being developed in \cite{LogTaut}.
\end{remark}

\subsection{Recursions for double ramification descendants}  \label{Sect:drdescendant_recursion}
Given a vector $\bfx \in \mathbb{Z}^n$ with $|\bfx|=k(2g-2+n)$ and $\bfe \in \mathbb{Z}_{\geq 0}^n$ with $|\bfe|=2g-3+n$, we want to give a recursion determining all intersection numbers
\begin{equation}  \label{eqn:psi_descendant}
    \mathrm{H}_g(\bfx, \bfe) = \int_{\Mbar_{g,n}} \DR_g(\bfx) \cdot \psi_1^{e_1} \cdots \psi_n^{e_n}\,.
\end{equation}
Note that a priori, the invariant $\mathrm{H}_g(\bfx, \bfe)$ is defined as an intersection number of the logarithmic double ramification cycle $\logDR_g(\bfx)$ supported on a log blowup of $\Mbar_{g,n}$. However, in the absence of branch cycles, all the insertions $\psi_i$ above are pulled back from $\Mbar_{g,n}$ and so by applying the projection formula, we can replace $\logDR_g(\bfx)$ by its pushforward $\DR_g(\bfx)$ in the above intersection number.

The splitting formula for $\psi$-classes can be used to recursively calculate the numbers \eqref{eqn:psi_descendant}. However, when $k \neq 0$ the recursion naturally features a generalization of these numbers defined in equation \eqref{eqn:psi_kappa_descendant} as
\begin{equation*} 
    \mathrm{H}_g(\bfx, \bfe, f) = \int_{\Mbar_{g,n}} \DR_g(\bfx) \cdot \psi_1^{e_1} \cdots \psi_n^{e_n} \cdot \kappa_1^f\,,
\end{equation*}
where now $|e| + f = 2g-3+n$. 

\begin{proposition}
Assume that for $1 \leq s \leq n$ the component $e_s$ of $\bfe$ is positive and denote $\bfe_s = \bfe - \delta_s$. Then
\begin{align} \label{eqn:Hxef_recursion}
x_s (2g-2+n) \cdot \mathrm{H}_g(\bfx, \bfe, f) = k \cdot \mathrm{H}_g(\bfx, \bfe_s, f+1) + \sum_{\pi: \Gamma \to T_1}\rho(\pi,s) \cdot \mult(\pi) \cdot \mathrm{Cont}_{\pi, \bfe, f}
\end{align}
where the sum goes over covers $\pi: \Gamma \to T_1$ with two vertices $v_0, v_1$ of $\Gamma$ which map to the two vertices $w_0, w_1$ of $T_1$ (in that order). The contribution of this cover to the formula above is given by
\begin{equation} \label{eqn:Cont_Gamma_T1_e}
 \mathrm{Cont}_{\pi, \bfe, f} = \binom{f}{f_0, f_1} \mathrm{H}_{g(v_0)}(\bfx[v_0], \bfe_s[v_0], f_0) \cdot \mathrm{H}_{g(v_1)}(\bfx[v_1], \bfe_s[v_1], f_1)\,,
\end{equation}
where $\bfx[v_j]$, $\bfe_s[v_j]$ are the entries of the vectors $\bfx, \bfe_s$ associated to markings attached to vertex $v_j$ and  
\[
f_j = 2g(v_j)-3+n(v_j) - |\bfe_s[v_j]|\,.
\]
\end{proposition}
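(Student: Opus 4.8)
The plan is to obtain the recursion \eqref{eqn:Hxef_recursion} by integrating the $\psi$-splitting formula of Proposition~\ref{pro:DR_psi_splitting} against a well-chosen auxiliary tautological class. Since $e_s>0$ we may factor the integrand of $\mathrm{H}_g(\bfx,\bfe,f)$ as $\DR_g(\bfx)\cdot\psi_s\cdot\psi^{\bfe_s}\cdot\kappa_1^f$ with $\psi^{\bfe_s}=\prod_i\psi_i^{(\bfe_s)_i}$; the idea is to multiply both sides of \eqref{eqn:psi_splitting} by $\psi^{\bfe_s}\cdot\kappa_1^f$, integrate over $\Mbar_{g,n}$, and clear the factor $2g-2+n$. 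Using $\psi_s\psi^{\bfe_s}=\psi^{\bfe}$ and $\kappa_1\cdot\kappa_1^f=\kappa_1^{f+1}$, the left-hand side becomes $x_s(2g-2+n)\,\mathrm{H}_g(\bfx,\bfe,f)-k\,\mathrm{H}_g(\bfx,\bfe_s,f+1)$, which already accounts for the first two terms of \eqref{eqn:Hxef_recursion}. Note that the whole argument stays on $\Mbar_{g,n}$: every insertion here lives on $\Mbar_{g,n}$ (as noted before \eqref{eqn:psi_descendant}) and Proposition~\ref{pro:DR_psi_splitting} is an identity in $\CH^*(\Mbar_{g,n})$, so no log-blowup subtleties enter.

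It then remains to show that for each two-vertex $k$-leaky cover $\pi:\Gamma\to T_1$ appearing in \eqref{eqn:psi_splitting} one has $\int_{\Mbar_{g,n}}\DR_\pi\cdot\psi^{\bfe_s}\cdot\kappa_1^f=\mathrm{Cont}_{\pi,\bfe,f}$. For this I would apply the projection formula to the gluing map $\xi_\Gamma:\Mbar_\Gamma=\Mbar_{g(v_0),n(v_0)}\times\Mbar_{g(v_1),n(v_1)}\to\Mbar_{g,n}$ defining $\DR_\pi$, together with the standard identities $\xi_\Gamma^\ast\psi_i=\pi_{v(i)}^\ast\psi_i$ for the legs (as in \eqref{eq:pbpsi}) and the additivity $\xi_\Gamma^\ast\kappa_1=\pi_{v_0}^\ast\kappa_1+\pi_{v_1}^\ast\kappa_1$ of $\kappa_1$ under boundary gluing. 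Expanding $\kappa_1^f$ by the binomial theorem and using that $\Mbar_\Gamma$ is a product, the integral factors as
\[
\sum_{f_0+f_1=f}\binom{f}{f_0,f_1}\prod_{j=0,1}\int_{\Mbar_{g(v_j),n(v_j)}}\DR_{g(v_j)}(\bfx[v_j])\cdot\psi^{\bfe_s[v_j]}\cdot\kappa_1^{f_j},
\]
where $\bfx[v_j]$ is the $\DR$-vector at $v_j$ — the legs at $v_j$ carrying their $\bfx$-signs, together with the signed expansion factors $\mp\omega(e)$ at the half-edges of the bounded edges — which is balanced precisely by the $k$-leaky condition \eqref{eq:leakybal}. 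The $j$-th factor is by definition $\mathrm{H}_{g(v_j)}(\bfx[v_j],\bfe_s[v_j],f_j)$ and vanishes for dimension reasons unless $f_j=2g(v_j)-3+n(v_j)-|\bfe_s[v_j]|$. A short count using $\sum_j g(v_j)=g-m+1$ (with $m$ the number of bounded edges of $\Gamma$), $\sum_j n(v_j)=n+2m$ and $\sum_j|\bfe_s[v_j]|=|\bfe_s|=|\bfe|-1$ shows that these two forced values add up to exactly $f$ — equivalently, this is the standing relation $|\bfe|+f=2g-3+n$. Hence a single term of the sum survives, with $f_j$ exactly as in the statement and coefficient $\binom{f}{f_0,f_1}$, and it equals $\mathrm{Cont}_{\pi,\bfe,f}$ of \eqref{eqn:Cont_Gamma_T1_e}. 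Collecting contributions and cancelling $2g-2+n$ against the denominator in \eqref{eqn:psi_splitting} yields \eqref{eqn:Hxef_recursion}.

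The only step I expect to require genuine care is the combinatorial bookkeeping: tracking the signs and verifying the balancing of the vertex $\DR$-vectors $\bfx[v_j]$ against the conventions of Definition~\ref{Def:leakycov}, and checking the dimension identity $\sum_j\bigl(2g(v_j)-3+n(v_j)-|\bfe_s[v_j]|\bigr)=f$ that forces the expansion of $\kappa_1^f$ to collapse to one term. Everything else is a formal consequence of Proposition~\ref{pro:DR_psi_splitting} and the elementary pullback rules for $\psi$- and $\kappa_1$-classes under gluing maps; no geometric input beyond the already-established splitting formula is needed.
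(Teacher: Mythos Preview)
Your proposal is correct and follows essentially the same approach as the paper: multiply the splitting formula \eqref{eqn:psi_splitting} by $\psi^{\bfe_s}\cdot\kappa_1^f$, integrate over $\Mbar_{g,n}$, and use the additive splitting $\xi_\Gamma^\ast\kappa_1=\pi_{v_0}^\ast\kappa_1+\pi_{v_1}^\ast\kappa_1$ together with a dimension count to reduce the binomial expansion to a single surviving term. You have in fact spelled out several details (the dimension identity $f_0+f_1=f$, the remark on negative $f_j$) that the paper leaves implicit.
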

\begin{proof}
This immediately follows from multiplying \eqref{eqn:psi_splitting} by $\psi_1^{e_1} \cdots \psi_s^{e_s-1} \cdots \psi_n^{e_n}$ and taking the integral over $\Mbar_{g,n}$. The factor $\binom{f}{f_0, f_1}$ arises when the factor $\kappa_1^f$ in \eqref{eqn:psi_kappa_descendant} splits as $(\kappa_{1,v_0} + \kappa_{1, v_1})^f$ when restricted to the boundary stratum associated to $\Gamma$. By dimension reasons, the only term which survives is $\kappa_{1, v_0}^{f_0} \kappa_{1, v_1}^{f_1}$, which appears with the above binomial factor. Note that when either $f_0$ or $f_1$ are negative, the integral vanishes for dimension reasons, and the binomial vanishes by definition.
\end{proof}

The numbers $\mathrm{H}_g(\bfx, \bfe, f)$ are polynomial in $\bfx$ by \cite{PZ24}. Seeing the entries $x_t$ of $\bfx$ as formal variables and dividing equation \eqref{eqn:Hxef_recursion} by $x_s (2g-2+n)$, this equation determines $\mathrm{H}_g(\bfx, \bfe, f)$ in terms of the polynomials $\mathrm{H}_{g'}(\bfx', \bfe', f')$ with $|\bfe'|<|\bfe|$. Iterating this procedure, the initial data of the recursion is given by the numbers
\begin{equation}  \label{eqn:kappa_descendant}
    \mathrm{H}_g(\bfx, \mathbf{0}, 2g-3+n) = \int_{\Mbar_{g,n}} \DR_g(\bfx) \cdot\kappa_1^{2g-3+n}\,.
\end{equation}
These numbers are determined explicitly in forthcoming work \cite{Sauvaget_integral} by Sauvaget. 

\section{Piecewise polynomiality in any genus and wall crossings for genus \texorpdfstring{$0$}{0}}

\begin{theorem}\label{thm-pp}
    Let $n \geq 3$  and let  $\mathcal{X}= \{\bfx \in \mathbb{Z}^n \;|\;|\bfx| = k (2g-2+n)\}$ for some $k \in \mathbb{Z}$. Let $\bfe \in \mathbb{Z}_{\geq 0}^n$ such that $0 \leq |\bfe| \leq 2g-3+n$.

We can then view the $k$-leaky double Hurwitz descendant as a function

$$ \mathcal{X}\rightarrow \mathbb{Q}: \bfx \mapsto \mathrm{H}_g(\bfx, \bfe). $$

This function is piecewise polynomial, where the polynomials are of degree $$ n-3+4g-|\bfe|.$$

\end{theorem}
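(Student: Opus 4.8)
The plan is to deduce piecewise polynomiality from the tropical formula of Theorem \ref{thm-corres}, reducing the statement to the piecewise polynomiality of the vertex multiplicities together with a combinatorial bookkeeping of edge factors. By Theorem \ref{thm-corres} we have $\mathrm{H}_g(\bfx, \bfe) = \mathrm{H}^{\trop}_g(\bfx, \bfe) = \sum_\pi \frac{1}{|\Aut(\pi)|} \prod_e \omega(e) \prod_v \mult_v$, where $\pi$ ranges over $k$-leaky covers of genus $g$ and degree $\bfx$ satisfying the Psi-conditions $\bfe$. First I would fix a \emph{combinatorial type} of leaky cover: the underlying graph $\Gamma$ (with its genus function and leg labels), the combinatorial type of the target line graph $T$, and the assignment of vertices of $\Gamma$ to vertices of $T$. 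For a fixed $\bfx$, the expansion factors $\omega(e)$ on the edges of $\Gamma$ are then \emph{uniquely determined} by the leaky balancing condition \eqref{eq:leakybal} and the leg weights $x_i$ — this is the same linear-algebra argument as in \cite{CMR22}: orienting the graph left-to-right, the edge weights are obtained by successively solving the balancing equations, and each is an integer-affine-linear function of $\bfx$ (and $k$, which is itself affine-linear in $\bfx$). Whether a given combinatorial type actually occurs for a particular $\bfx$ is governed by the inequalities $\omega(e) > 0$, which cut out the chambers of polynomiality; the walls are exactly the hyperplanes where some $\omega(e)$ vanishes, i.e. the expressions $\sum_{i \in I} x_i - k(\sharp I - 1) = 0$ of Theorem \ref{thm-wc}.

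Next I would analyze the contribution of a single combinatorial type on a fixed chamber. The automorphism factor $\frac{1}{|\Aut(\pi)|}$ is locally constant (it depends only on the combinatorial type), so it contributes a constant. The edge product $\prod_e \omega(e)$ is a product of affine-linear functions of $\bfx$, hence a polynomial; its degree is the number of bounded edges $\#E(\Gamma)$. For the vertex multiplicities $\mult_v = \int_{\Mbar_{g(v),\val(v)}} \DR_{g(v)}(\bfx(v)) \cdot \prod_{i \in I_v} \psi_i^{e_i}$, I would invoke the polynomiality of double ramification descendant integrals in the entries of the ramification vector — this is the result of \cite{PZ24} cited in the excerpt (used in the same way in the $\mathrm{H}_g(\bfx,\bfe,f)$ discussion), applied with $f = 0$. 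Since $\bfx(v)$ consists of the signed expansion factors at the flags of $v$, each an affine-linear function of $\bfx$, the composite $\mult_v$ is a polynomial in $\bfx$. Its degree: by \cite{PZ24} the integral $\int_{\Mbar_{g',n'}} \DR_{g'}(\mathbf y) \psi^{\bfe'}$ is polynomial of degree $2g'$ in $\mathbf y$ (the virtual codimension of $\DR$ is $g'$, and the "extra" polynomial degree matches that of the classical DR descendants), so $\mult_v$ has degree $2g(v)$ in $\bfx$.

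Then I would assemble the degree count. On a fixed chamber, $\mathrm{H}_g(\bfx,\bfe)$ is a finite sum of polynomials, hence polynomial; it remains to check the degree is exactly $n - 3 + 4g - |\bfe|$, which I would get by showing every combinatorial type contributes in degree $\le n-3+4g-|\bfe|$, with at least one type attaining it. For a fixed type, the total degree is $\#E(\Gamma) + \sum_v 2g(v)$. Here the Psi-conditions \eqref{eq:psicondition} pin down the valences: $\val(v) = \sum_{i\in I_v} e_i + 3 - 2g(v)$. Summing over vertices and using the handshake-type identity $\sum_v \val(v) = 2\#E(\Gamma) + n$ (each bounded edge contributes $2$, each of the $n$ legs contributes $1$), together with $\sum_v \sum_{i \in I_v} e_i = |\bfe|$ and $\sum_v g(v) = g - h_1(\Gamma) = g - (\#E(\Gamma) - \#V(\Gamma) + 1)$, gives $2\#E(\Gamma) + n = |\bfe| + 3\#V(\Gamma) - 2(g - \#E(\Gamma) + \#V(\Gamma) - 1)$, which simplifies to $\#V(\Gamma) = n - |\bfe| - 2g + 2 - \#E(\Gamma)$... — I will need to redo this bookkeeping carefully, but the upshot is a linear relation among $\#V(\Gamma)$, $\#E(\Gamma)$ and $g, n, |\bfe|$, forcing $\#E(\Gamma) + \sum_v 2g(v) = \#E(\Gamma) + 2(g - h_1(\Gamma)) = \#E(\Gamma) + 2g - 2(\#E(\Gamma) - \#V(\Gamma) + 1)$, and substituting the relation for $\#V(\Gamma)$ yields the claimed $4g - 3 + n - |\bfe|$. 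The degree is maximized (rather than being strictly smaller) precisely on types achieving the constraint with equality, and I would exhibit an explicit such type — e.g. a "caterpillar" chain of genus-zero vertices with one genus-$g$ vertex, analogous to the covers in Figure \ref{fig:exleakyPsicover} — to confirm the top degree is actually realized and not merely an upper bound.

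The main obstacle I anticipate is twofold: first, being careful about the degree of the vertex integrals from \cite{PZ24} — the precise statement is that $\mathrm{H}_{g'}(\mathbf y, \bfe')$ is polynomial of degree $2g'$ (not $4g' - 3 + n' - |\bfe'|$, which would be the tropical-formula degree and is consistent only because $n' = |\bfe'| + 3 - 2g'$ is forced by the Psi-condition on a single vertex with no bounded edges, giving $4g'-3+n'-|\bfe'| = 2g'$), and I want to make sure the version I cite really gives polynomiality in \emph{all} entries simultaneously, not just along lines. Second, the handshake/genus bookkeeping that produces the degree $4g - 3 + n - |\bfe|$ needs to be done without sign errors, and I should double-check the edge-count identity against the worked Example \ref{ex-count} (genus $1$, $n=3$, $|\bfe| = 1$, so degree $4 \cdot 1 - 3 + 3 - 1 = 3$) as a sanity check on the final formula.
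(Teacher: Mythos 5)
Your overall strategy (reduce to the tropical count via Theorem \ref{thm-corres}, use polynomiality of the vertex integrals from \cite{PZ24}, then do degree bookkeeping) is the same as the paper's, and it works as written in genus $0$. But there is a genuine gap for $g>0$: you assert that, once the combinatorial type is fixed, the leaky balancing condition determines \emph{all} edge expansion factors uniquely as affine-linear functions of $\bfx$. This is only true when the source graph $\Gamma$ is a tree. When $h_1(\Gamma)=g'>0$ (and such covers do contribute, e.g.\ the cycles in Figure \ref{fig:exleakyPsicover} and Example \ref{ex-genus1}), the balancing equations leave a $g'$-dimensional family of solutions: one must choose $g'$ edges whose removal gives a tree, treat their weights $i_1,\dots,i_{g'}$ as free integer parameters, and then a single combinatorial type contributes a \emph{sum over lattice points} $(i_1,\dots,i_{g'})$ in a bounded chamber of the hyperplane arrangement cut out by the positivity of the remaining (affine-linear) expansion factors — boundedness being the content of \cite[Corollary 2.13]{CJM10}. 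This is exactly what the paper does, and it is not an optional refinement: your own bookkeeping betrays the problem, since for a type with $h_1(\Gamma)=g'$ the quantity $\#E(\Gamma)+\sum_v 2g(v)$ works out to $n-3+4g-|\bfe|-g'$, not $n-3+4g-|\bfe|$; the missing $g'$ in the degree is recovered precisely by summing the polynomial multiplicity over the lattice points of the $g'$-dimensional polytope, which raises the degree by $g'$.

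Two smaller points. First, the piecewise structure in positive genus does not come only from vanishing loci of the (tree-determined) edge weights: the walls arise where the combinatorial/topological type of the hyperplane arrangement in the $(i_1,\dots,i_{g'})$-variables changes as $\bfx$ varies; the explicit wall description $\sum_{i\in I}x_i-k(\sharp I-1)=0$ is only established in the paper for $g=0$. Second, your worry about whether the top degree is actually attained goes beyond what the paper verifies (its proof produces the degree from the construction without checking non-cancellation of leading terms), so you need not exhibit an extremal type to match the paper's level of detail — but you do need the lattice-point summation to even reach the stated degree.
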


Using the Correspondence Theorem \ref{thm-corres} the function mapping $\bfx$ to the count of tropical $k$-leaky covers satisfying Psi-conditions is also piecewise polynomial of course. In fact, we will prove Theorem \ref{thm-pp} on the tropical side.

\begin{remark}\label{rem-vertexmult}
    In genus $0$, the vertex multiplicity 
    \[ \mult_V:= \int_{\overline{M}_{0,\val(V)}} \logDR_{0}(\mathbf{a_V})\cdot \prod_{i\in I_V} \psi_i^{e_i} = \int_{\overline{M}_{0,\val(V)}}  \prod_{i\in I_V} \psi_i^{e_i} = \frac{(\val(V)-3)!}{\prod_{i\in I_V}e_i!}\] 
     of a leaky cover satisfying Psi-conditions does not depend on $k$ or on the expansion factors of its adjacent edges; it equals a multinomial coefficient that only depends on its valency and the Psi-conditions.

More generally, the vertex multiplicities are given by the intersection numbers
$$
\int_{\overline{M}_{g(v),\val(v)}} \DR_{g(v)}(\mathbf{x}(v))\cdot \prod_{i\in I_v} \psi_i^{e_i}\,.
$$
By \cite{PZ24}, the cycle $\DR_{g(v)}(\mathbf{x}(v))$ is a tautological class with coefficients which are polynomials in the entries of the vector $\mathbf{x}(v)$ of degree equal to $2 g(v)$. Accordingly, the vertex multiplicities are polynomials in the entries of the vector $\mathbf{x}(v)$ of degree equal to $2 g(v)$.
     
\end{remark}

\begin{proof}[Proof of Theorem \ref{thm-pp}]
By the Correspondence Theorem \ref{thm-corres}, $$\mathrm{H}_g(\bfx, \bfe)=\mathrm{H}^{\trop}_g(\bfx, \bfe),$$ 
 and the latter is a sum over all tropical $k$-leaky covers $\pi$ of degree $\bfx$ and genus $g$ satisfying the Psi-conditions $\bfe$ and mapping to a fixed metric line graph, where each cover $\pi$ is counted with 
 multiplicity $\mult(\pi)$ equal to the product of expansion vectors, vertex multiplicities and $\frac{1}{
|\Aut(\pi)|}$.

Given the combinatorial type of an abstract tropical curve of genus $g$ with $n$ labeled ends (such that the valence of a vertex $v$ of genus $g(v)$ adjacent to the ends with labels in $I_v$ equals $\sum_{i\in I_V} e_i+3-2g(v)$), we associate the expansion factor $|x_i|$ to the end with label $i$. 
We orient the ends pointing inward if $x_i>0$ and outward if $x_i<0$.
We view this degree now as something varying with the vector $\bfx$.

Furthermore, we fix $g':=g-\sum_v g(v)$ edges whose removal produces a tree, and view their expansion factors as variables $i_1,\ldots,i_{g'}$.

We pick an arbitrary orientation for the bounded edges, for which we ask ourselves whether there exists a map to the line graph respecting this orientation.

Using the $k$-leaky condition, the expansion factor of every other edge is then uniquely determined, and it is equal to an affine-linear form in the $x_j$ and $i_k$.

A map to the line graph exists if and only if each expansion factor is positive.

If a map to the line graph exists, there is a unique way to add a metric to the graph which is compatible with the metric of the target.

Consider the space with coordinates $i_1,\ldots,i_{g'}$. Each expansion factor defines a hyperplane equation in this space, such that the expansion factor is positive if and only if we are on the right side of the hyperplane.

The sum over all leaky tropical covers can thus be viewed as a weighted sum over all integer points $(i_1,\ldots,i_{g'})$ in a bounded chamber of a hyperplane arrangement defined by an oriented labeled graph (with orientations of the ends matching the $x_i$), where each summand contributes $\mult(\pi)$ for the associated leaky cover $\pi$. 
The fact that the chamber is bounded follows from Corollary 2.13 in \cite{CJM10}.

By Definition \ref{def-trophgxe}, the multiplicity $\mult(\pi)$ with which $\pi$ contributes to the count of $k$-leaky covers satisfying Psi-conditions is a product of expansion vectors, vertex multiplicities and $\frac{1}{
|\Aut(\pi)|}$. 
The last factor is a a number, independent of the expansion factors of the ends.
The first factor is a product of affine-linear forms in the $x_j$ and the $i_k$ of degree equal to the number of bounded edges, which is $n-3+3g'-\sum_v (\val(v)-3)= n-3+3g'-|\bfe|+\sum_v 2g(v).$

By Remark \ref{rem-vertexmult}, the vertex multiplicities are polynomial of degree $g(v)$ in the expansion factors of the adjacent edges, which are themselves affine-linear forms in the $x_j$ and the $i_k$. 

Thus, viewed as polynomial in the $x_j$ and $i_k$, the multiplicity $\mult(\pi)$ is of degree $ n-3+3g'-|\bfe|+\sum_v 4g(v).$

Summing over the points $(i_1,\ldots,i_{g'})$ in the bounded chamber increases the degree by $g'$.

Thus the multiplicity $\mult(\pi)$ is a polynomial in the $x_j$ of degree 
$ n-3+4g'-|\bfe|+\sum_v 4g(v)= n-3+4g-|\bfe|.$

In total we obtain a piecewise polynomial function, where the piecewise structure arises since the topology of the hyperplane arrangement of the expansion factors in the space with coordiantes $i_1,\ldots,i_{g'}$ may vary for different choices of $x_j$.
\end{proof}

\begin{example}\label{ex-pp}
    Let $g=0$, $n=5$, $k=1$ and $\bfe=(1,0,0,0,0)$. 
We fix the inequalities $x_1,x_4>0$, $x_2,x_3,x_5<0$, $x_1+x_4+x+5-2>0$, $x_1+x_4+x_2-2>0$, $x_1+x_4+x_5-2>0$, $x_1+x_3+x_5-2>0$, $x_1+x_3+x_4-2>0$, $x_1+x_2+x_5-2>0$.

The unique bounded edge of each the 6 labeled trees with label $1$ adjacent to a $4$-valent vertex can be oriented in a unique way producing a leaky cover with positive expansion factors in the chamber defined by these inequalities, see Figure \ref{fig-pp}. In total, we obtain for all $\bfx$ satisfying the inequalities above the polynomial 
\begin{align*} \mathrm{H}_0(\bfx, (1,0,0,0,0))=& (x_1+x_2+x_3-2)+(x_1+x_4+x_5-2)+(x_1+x_3+x_5-2)\\&+(x_1+x_3+x_4-2)+(x_1+x_2+x_5-2)+(x_1+x_2+x_4-2)\\=&
6x_1+3x_2+3x_3+3x_4+3x_5-12 = 3x_1-3.\end{align*}

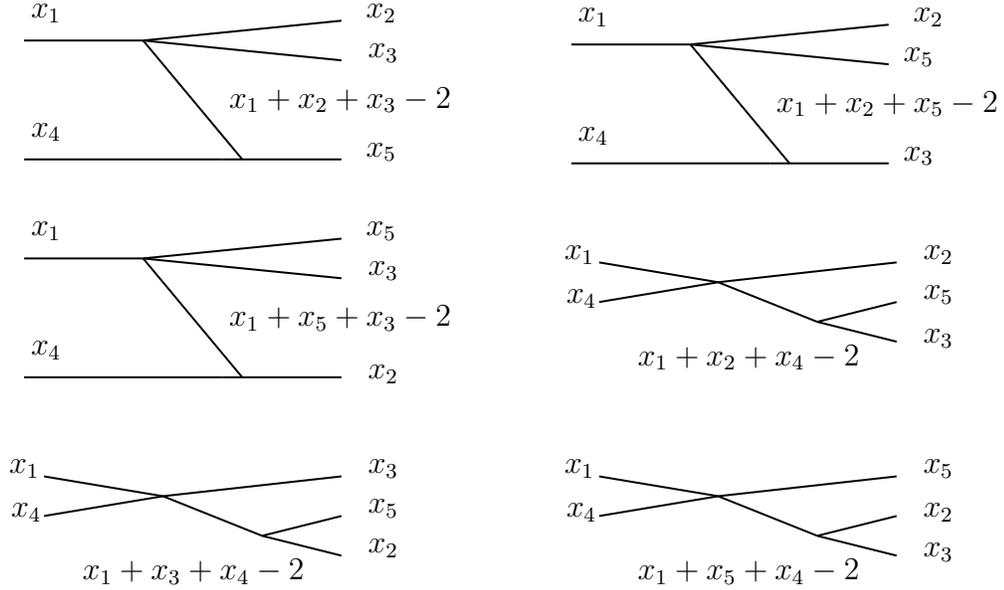
\begin{figure}[tb]
    \centering

\tikzset{every picture/.style={line width=0.75pt}} 

\begin{tikzpicture}[x=0.75pt,y=0.75pt,yscale=-1,xscale=1]

\draw    (120,200) -- (180,200) ;
\draw    (180,200) -- (230,260) ;
\draw    (180,200) -- (280,210) ;
\draw    (180,200) -- (280,190) ;
\draw    (220,260) -- (280,260) ;
\draw    (120,260) -- (220,260) ;
\draw    (396,202) -- (456,202) ;
\draw    (456,202) -- (506,262) ;
\draw    (456,202) -- (556,212) ;
\draw    (456,202) -- (556,192) ;
\draw    (496,262) -- (556,262) ;
\draw    (396,262) -- (496,262) ;
\draw    (120,310) -- (180,310) ;
\draw    (180,310) -- (230,370) ;
\draw    (180,310) -- (280,320) ;
\draw    (180,310) -- (280,300) ;
\draw    (220,370) -- (280,370) ;
\draw    (120,370) -- (220,370) ;
\draw    (410,312) -- (470,322) ;
\draw    (470,322) -- (520,342) ;
\draw    (470,322) -- (560,312) ;
\draw    (470,322) -- (410,332) ;
\draw    (520,342) -- (560,332) ;
\draw    (520,342) -- (560,352) ;
\draw    (130,420) -- (190,430) ;
\draw    (190,430) -- (240,450) ;
\draw    (190,430) -- (280,420) ;
\draw    (190,430) -- (130,440) ;
\draw    (240,450) -- (280,440) ;
\draw    (240,450) -- (280,460) ;
\draw    (410,420) -- (470,430) ;
\draw    (470,430) -- (520,450) ;
\draw    (470,430) -- (560,420) ;
\draw    (470,430) -- (410,440) ;
\draw    (520,450) -- (560,440) ;
\draw    (520,450) -- (560,460) ;

\draw (122,180) node [anchor=north west][inner sep=0.75pt]   [align=left] {$\displaystyle x_{1}$};
\draw (291,180) node [anchor=north west][inner sep=0.75pt]   [align=left] {$\displaystyle x_{2}$};
\draw (292,200) node [anchor=north west][inner sep=0.75pt]   [align=left] {$\displaystyle x_{3}$};
\draw (122,240) node [anchor=north west][inner sep=0.75pt]   [align=left] {$\displaystyle x_{4}$};
\draw (291,250) node [anchor=north west][inner sep=0.75pt]   [align=left] {$\displaystyle x_{5}$};
\draw (222,222) node [anchor=north west][inner sep=0.75pt]   [align=left] {$\displaystyle x_{1} +x_{2} +x_{3} -2$};
\draw (398,182) node [anchor=north west][inner sep=0.75pt]   [align=left] {$\displaystyle x_{1}$};
\draw (567,182) node [anchor=north west][inner sep=0.75pt]   [align=left] {$\displaystyle x_{2}$};
\draw (562,252) node [anchor=north west][inner sep=0.75pt]   [align=left] {$\displaystyle x_{3}$};
\draw (398,242) node [anchor=north west][inner sep=0.75pt]   [align=left] {$\displaystyle x_{4}$};
\draw (562,202) node [anchor=north west][inner sep=0.75pt]   [align=left] {$\displaystyle x_{5}$};
\draw (498,224) node [anchor=north west][inner sep=0.75pt]   [align=left] {$\displaystyle x_{1} +x_{2} +x_{5} -2$};
\draw (122,290) node [anchor=north west][inner sep=0.75pt]   [align=left] {$\displaystyle x_{1}$};
\draw (292,362) node [anchor=north west][inner sep=0.75pt]   [align=left] {$\displaystyle x_{2}$};
\draw (292,310) node [anchor=north west][inner sep=0.75pt]   [align=left] {$\displaystyle x_{3}$};
\draw (122,350) node [anchor=north west][inner sep=0.75pt]   [align=left] {$\displaystyle x_{4}$};
\draw (291,290) node [anchor=north west][inner sep=0.75pt]   [align=left] {$\displaystyle x_{5}$};
\draw (222,332) node [anchor=north west][inner sep=0.75pt]   [align=left] {$\displaystyle x_{1} +x_{5} +x_{3} -2$};
\draw (391,302) node [anchor=north west][inner sep=0.75pt]   [align=left] {$\displaystyle x_{1}$};
\draw (572,302) node [anchor=north west][inner sep=0.75pt]   [align=left] {$\displaystyle x_{2}$};
\draw (572,344) node [anchor=north west][inner sep=0.75pt]   [align=left] {$\displaystyle x_{3}$};
\draw (392,324) node [anchor=north west][inner sep=0.75pt]   [align=left] {$\displaystyle x_{4}$};
\draw (572,322) node [anchor=north west][inner sep=0.75pt]   [align=left] {$\displaystyle x_{5}$};
\draw (428,352) node [anchor=north west][inner sep=0.75pt]   [align=left] {$\displaystyle x_{1} +x_{2} +x_{4} -2$};
\draw (111,410) node [anchor=north west][inner sep=0.75pt]   [align=left] {$\displaystyle x_{1}$};
\draw (292,450) node [anchor=north west][inner sep=0.75pt]   [align=left] {$\displaystyle x_{2}$};
\draw (292,410) node [anchor=north west][inner sep=0.75pt]   [align=left] {$\displaystyle x_{3}$};
\draw (112,432) node [anchor=north west][inner sep=0.75pt]   [align=left] {$\displaystyle x_{4}$};
\draw (292,430) node [anchor=north west][inner sep=0.75pt]   [align=left] {$\displaystyle x_{5}$};
\draw (148,460) node [anchor=north west][inner sep=0.75pt]   [align=left] {$\displaystyle x_{1} +x_{3} +x_{4} -2$};
\draw (391,410) node [anchor=north west][inner sep=0.75pt]   [align=left] {$\displaystyle x_{1}$};
\draw (572,432) node [anchor=north west][inner sep=0.75pt]   [align=left] {$\displaystyle x_{2}$};
\draw (572,452) node [anchor=north west][inner sep=0.75pt]   [align=left] {$\displaystyle x_{3}$};
\draw (392,432) node [anchor=north west][inner sep=0.75pt]   [align=left] {$\displaystyle x_{4}$};
\draw (572,410) node [anchor=north west][inner sep=0.75pt]   [align=left] {$\displaystyle x_{5}$};
\draw (428,460) node [anchor=north west][inner sep=0.75pt]   [align=left] {$\displaystyle x_{1} +x_{5} +x_{4} -2$};

\end{tikzpicture}

     \caption{6 rational $1$-leaky covers of degree $\bfx$ satisfying the inequalities in Example \ref{ex-pp} yield a nonzero contribution to the count $ \mathrm{H}_0(\bfx, (1,0,0,0,0))$. For each, its multiplicity equals the expansion factor of its unique bounded edge.}
    \label{fig-pp}
\end{figure}

\end{example}

\begin{lemma}
  Let $g=0$, $n \geq 3$  and let  $\mathcal{X}= \{\bfx \in \mathbb{Z}^n \;|\;|\bfx| = k (-2+n)\}$ for some $k \in \mathbb{Z}$. Let $\bfe \in \mathbb{Z}_{\geq 0}^n$ such that $0 \leq |\bfe| \leq -3+n$.
The walls separating the areas of polynomiality of the piecewise polynomial function 
$ \mathrm{H}_g(\bfx, \bfe)$    are given by vanishing expansion factors, i.e.\ by expressions of the form
$$ \sum_{i\in I} x_i -k\cdot (\sharp I-1)=0, $$
where $I\subset \{1,\ldots,n\}$, $2\leq \sharp I \leq n-2$.
\end{lemma}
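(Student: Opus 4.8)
The plan is to prove this tropically, via the Correspondence Theorem~\ref{thm-corres} and the combinatorial description of $\mathrm{H}^{\trop}_g(\bfx,\bfe)$ worked out in the proof of Theorem~\ref{thm-pp}. In genus $0$ one has $g' = g - \sum_v g(v) = 0$, so there are no auxiliary edge-variables $i_1,\dots,i_{g'}$, and the source of every contributing $k$-leaky cover is a tree all of whose vertices (in the minimal vertex set) have valence $\val(v) = \sum_{i\in I_v} e_i + 3 \geq 3$ by the Psi-condition~\eqref{eq:psicondition}. First I would fix a combinatorial type of such a labelled tree: then the expansion factors of its ends are the $|x_i|$, and the $k$-leaky balancing condition~\eqref{eq:leakybal} determines the expansion factor of each bounded edge as an affine-linear form in $\bfx$; the type is realised by a (unique, metrized) cover precisely when all of these are positive. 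By Definition~\ref{def-trophgxe} the count $\mathrm{H}^{\trop}_0(\bfx,\bfe)$ is then a finite sum over such types of $\frac{1}{|\Aut(\pi)|}\cdot\prod_e\omega(e)\cdot\prod_v\mult_v$, in which, by Remark~\ref{rem-vertexmult}, the vertex multiplicities $\mult_v$ are $\bfx$-independent multinomial coefficients.

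Next I would compute the bounded-edge expansion factors explicitly. Let $e$ be a bounded edge of such a tree and let $\Gamma_1$ be one of the two subtrees cut off by $e$, with end-set $I\subseteq\{1,\dots,n\}$. Summing the balancing relation $d_v^l-d_v^r = k(2g(v)-2+\val(v))$ over the vertices of $\Gamma_1$, the internal edges of $\Gamma_1$ cancel, each end labelled $i\in I$ contributes $x_i$ (with the convention that left ends carry positive and right ends negative expansion factor), and the half-edge $e$ contributes $\pm\omega(e)$; using the genus-$0$ count $\sum_{v\in V(\Gamma_1)}(2g(v)-2+\val(v)) = \sharp I-1$ (a tree with $V_1$ vertices has $V_1-1$ internal edges, so $\sum_v\val(v) = 2(V_1-1)+\sharp I+1$), one obtains
\[
\omega(e) \;=\; \Big|\,\textstyle\sum_{i\in I}x_i \;-\; k\,(\sharp I-1)\,\Big|.
\]
The same count, now with $\val(v)\geq 3$ for every $v\in V(\Gamma_1)$, forces $\sharp I\geq 2$; applying it to the complementary subtree gives $\sharp I\leq n-2$.

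It then follows quickly: set $\ell_I(\bfx) = \sum_{i\in I}x_i - k(\sharp I-1)$ and let $\mathcal{A}$ be the arrangement of hyperplanes $\{\ell_I=0\}$ for $2\leq\sharp I\leq n-2$, inside the real affine subspace $\{|\bfx|=k(n-2)\}$. On each open chamber of $\mathcal{A}$ the set of combinatorial types admitting a realising cover is constant (a type contributes exactly when none of its edge forms $\ell_{I_e}$ vanishes), the sign of each $\ell_I$ is constant, and hence each realising cover contributes the fixed polynomial $\frac{1}{|\Aut(\pi)|}\big(\prod_v\mult_v\big)\prod_e(\pm\ell_{I_e})$. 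Thus $\bfx\mapsto\mathrm{H}_0(\bfx,\bfe)$ agrees with a polynomial on each chamber, so its walls are contained in $\bigcup_I\{\ell_I=0\}$, which is the assertion.

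The step I expect to be the main obstacle is ruling out the hyperplanes $\{x_i=0\}$ from the list: the bookkeeping in the proof of Theorem~\ref{thm-pp} is organised by first fixing the signs of the $x_i$ (equivalently, orienting the ends), so a priori each sign pattern could carry its own polynomial, and $\{x_i=0\}$ would be a wall. What makes the two sides agree is precisely the computation above — the bounded-edge expansion factors, hence the entire per-cover contribution, are the globally affine-linear forms $\ell_{I_e}$, which vary continuously through $\{x_i=0\}$; so the chamber polynomials of $\mathcal{A}$ extend across these loci even though the auxiliary orientation data jumps there. The only remaining ingredient, that a combinatorial type with all expansion factors positive is realised by a unique metrized $k$-leaky cover, is already established in the proof of Theorem~\ref{thm-pp} (its bounded-chamber input from Corollary~2.13 of~\cite{CJM10} being trivial in genus $0$) and may be invoked verbatim.
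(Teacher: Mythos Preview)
Your proof is correct and follows the same tropical approach as the paper: both identify the bounded-edge expansion factors with the affine forms $\ell_I(\bfx)=\sum_{i\in I}x_i-k(\#I-1)$ and conclude that the walls lie among the hyperplanes $\{\ell_I=0\}$. Your argument is considerably more detailed than the paper's two-sentence sketch --- in particular, you carry out the summed balancing computation explicitly, verify the bounds $2\leq\#I\leq n-2$, and take care to argue that the hyperplanes $\{x_i=0\}$ are not walls, a point the paper's proof leaves implicit.
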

\begin{proof}
    The walls separating chambers of polynomiality are given by expressions as above, as this is the expansion factor of the edge separating the ends with labels in $I$ from the ends with labels not in $I$. Trees with an edge of this weight correspond to the polynomial on one side, on the other side the tree with the edge reversed contributes.
\end{proof}






    

\begin{proof}[Proof of Theorem \ref{thm-wc}]
    The proof follows the ideas presented in \cite{CJM10} for tropical double Hurwitz numbers, we have to include leaking and Psi-conditions.

Recall from the proof of Theorem \ref{thm-pp} that the polynomial expression equals a sum over oriented labeled trees such that the valence of a vertex $V$ adjacent to the ends with labels in $I_V$ equals $\sum_{i\in I_V} e_i+3$. Each tree contributes either $0$ or $\mult(\pi)$ for the leaky cover $\pi$ we can build from it. We build a cover by adding expansion factors to the bounded edges, satisfying the leaky condition, and a metric. 
If an edge in such a cover has expansion factor $\delta>0$, the corresponding oriented tree yields $0$ on the other side of the wall (as then $\delta<0$), however, the tree with the orientation of the edge reversed yields a nonzero contribution. Vice versa, that tree does not yield a contribution on the first side of the wall.

Trees that do not produce covers with an expansion factor being $\pm \delta$ contribute the same to both sides of the wall and thus do not contribute to the wall-crossing.

We produce a weighted bijection between ''cut-and-reglued'' covers and covers contributing to the wall-crossing.
Given a cover contributing to the wall-crossing, cut the edge with expansion factor $\delta$. We obtain two covers, one that contributes to $\mathrm{H}_0(\bfx_I\cup \{\delta\}, \bfe_I) $
 and one that contributes to
$\mathrm{H}_0(\bfx_{I^c}\cup \{-\delta\}, \bfe_{I^c})$.
Vice versa, if we have a pair of leaky covers, one contributing to $\mathrm{H}_0(\bfx_I\cup \{\delta\}, \bfe_I) $
 and one to
$\mathrm{H}_0(\bfx_{I^c}\cup \{-\delta\}, \bfe_{I^c})$, how can we reglue the ends labeled $\pm \delta$? First, we have to interlace the images of the vertices. There are $\binom{r}{r_1,r_2}$ choices for this, as $r$ is the number of vertices of the whole tree, while $r_i$ are the numbers of vertices of the two pieces.

For a fixed such choice, the orientation of the reglued edge labeled $\delta$ is determined by the images of the vertices and the compatibility with the cover of the line graph. Depending on this orientation, the reglued cover yields a nonzero contribution to precisely one of the sides of the wall. 
If it lives on side $1$, the only missing ingredient to count it with its correct multiplicity is the expansion factor of the cut and reglued edge, which is $\delta>0$. If it lives on side $2$, it appears with negative sign in the difference for the wall-crossing, and we have so far missed the expansion factor of its cut and reglued edge, which is $-\delta$. As the two minus signs cancel, we can treat both cases in the same way and thus obtain the claimed equality. 
\end{proof}

\begin{example}\label{ex-wc}
We continue Example \ref{ex-pp}. There, $n=5$, $k=1$ and $\bfe=(1,0,0,0,0)$. 
We computed the polynomial $ \mathrm{H}_0(\bfx, (1,0,0,0,0))$ for $\bfx$ satisfying the inequalities $x_1,x_4>0$, $x_2,x_3,x_5<0$, $x_1+x_4+x+5-2>0$, $x_1+x_4+x_2-2>0$, $x_1+x_2+x_3-2>0$, $x_1+x_3+x_5-2>0$, $x_1+x_3+x_4-2>0$, $x_1+x_2+x_5-2>0$. Let us now cross the wall $\delta= x_1+x_2+x_3-2=0$.
In Figure \ref{fig-pp}, all covers except the top left yield the same contribution on the other side of the wall, so their contributions cancel in the wall-crossing. Instead of the cover on the top left, the cover depicted in Figure \ref{fig-wc} arises on the other side of the wall. 

\begin{figure}[tb]
    \centering

\tikzset{every picture/.style={line width=0.75pt}} 

\begin{tikzpicture}[x=0.75pt,y=0.75pt,yscale=-1,xscale=1]

\draw    (120,200) -- (220,200) ;
\draw    (220,200) -- (170,260) ;
\draw    (220,200) -- (280,210) ;
\draw    (220,200) -- (280,190) ;
\draw    (220,260) -- (280,260) ;
\draw    (120,260) -- (220,260) ;

\draw (122,180) node [anchor=north west][inner sep=0.75pt]   [align=left] {$\displaystyle x_{1}$};
\draw (291,180) node [anchor=north west][inner sep=0.75pt]   [align=left] {$\displaystyle x_{2}$};
\draw (292,200) node [anchor=north west][inner sep=0.75pt]   [align=left] {$\displaystyle x_{3}$};
\draw (122,240) node [anchor=north west][inner sep=0.75pt]   [align=left] {$\displaystyle x_{4}$};
\draw (291,250) node [anchor=north west][inner sep=0.75pt]   [align=left] {$\displaystyle x_{5}$};
\draw (222,222) node [anchor=north west][inner sep=0.75pt]   [align=left] {$\displaystyle -x_{1} -x_{2} -x_{3} +2$};

\end{tikzpicture}

     \caption{The cover that arises when crossing the wall $\delta$ in Example \ref{ex-wc}.}
    \label{fig-wc}
\end{figure}
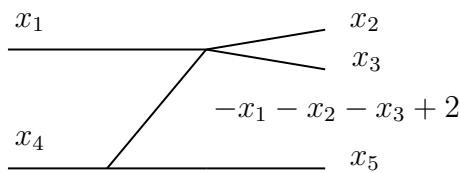

The wall-crossing equals $2(x_1+x_2+x_3-2)=\binom{2}{1,1}\delta$, as predicted by Theorem \ref{thm-wc}, as the cut $1$-leaky double Hurwitz descendants are both just one.
    
\end{example}

\section{Positivity and vanishing of leaky Hurwitz descendants in genus \texorpdfstring{$0$}{0}}



In this section we give a characterization of when genus zero leaky double Hurwitz descendant invariants vanish. By Remark \ref{rem-vertexmult}, the multiplicity with which a leaky cover contributes is always positive. A leaky double Hurwitz descendant in genus $0$ is thus positive if and only if we can construct a single leaky cover which contributes to the count.

If the genus is positive, it is possible to have leaky tropical covers which contribute with negative multiplicity, as well as some which contribute with positive multiplicity to a leaky double Hurwitz descendant, see Example \ref{ex-genus1}. The question whether a leaky double descendant is positive, negative or even $0$ is therefore hard to answer in general. 


\begin{example}\label{ex-genus1}
Consider the leaky Hurwitz number $H_1(d,-(d-2k))$ for some $d>k+1>0$. Figure \ref{fig:ex-genus1} shows two types of leaky tropical covers which contribute to the count. The upper cover has a vertex of genus $1$ with an incoming edge of weight $k$ and no outgoing edge. The vertex multiplicity of this genus $1$ vertex equals $-\frac{1}{24}$. The upper cover thus contributes $-\frac{k}{24}$.
The lower cover has only trivial vertex multiplicities. The weight $i$ of the edge in the cycle can vary from $1$ to $d-k-1$. Exchanging the two edges of the cycle yields an automorphism, so we have to divide by $\frac{1}{2}$. Altogether, the lower picture accounts for
$$\frac{1}{2}\cdot \sum_{i=1}^{d-k-1} i\cdot (d-k-i)=\frac{1}{12}\cdot (d-k)\cdot (d-k-1)\cdot (d-k+1).$$

Consequently, while the $k$-leaky number is positive for large $d$, there are regions of the parameter space where it becomes negative (e.g. $d=k+2$, $k > 12$, where the number is $\frac{1}{2}-\frac{k}{24}$).
\begin{figure}
    \centering

\tikzset{every picture/.style={line width=0.75pt}} 

\begin{tikzpicture}[x=0.75pt,y=0.75pt,yscale=-1,xscale=1]

\draw    (290,410) -- (330,400) ;
\draw  [fill={rgb, 255:red, 0; green, 0; blue, 0 }  ,fill opacity=1 ] (328.7,400) .. controls (328.7,399.28) and (329.28,398.7) .. (330,398.7) .. controls (330.72,398.7) and (331.3,399.28) .. (331.3,400) .. controls (331.3,400.72) and (330.72,401.3) .. (330,401.3) .. controls (329.28,401.3) and (328.7,400.72) .. (328.7,400) -- cycle ;
\draw    (230,410) -- (290,410) ;
\draw    (290,410) -- (380,420) ;
\draw    (230,460) -- (290,460) ;
\draw    (290,460) .. controls (300.14,451.32) and (321,451.61) .. (330,460) ;
\draw    (290,460) .. controls (299.86,469.32) and (321.86,468.75) .. (330,460) ;
\draw    (330,460) -- (380,460) ;

\draw (331.57,384.9) node [anchor=north west][inner sep=0.75pt]  [font=\tiny] [align=left] {$\displaystyle g=1$};
\draw (247.57,446.47) node [anchor=north west][inner sep=0.75pt]  [font=\scriptsize] [align=left] {$\displaystyle d$};
\draw (354.71,446.47) node [anchor=north west][inner sep=0.75pt]  [font=\scriptsize] [align=left] {$\displaystyle d-2k$};
\draw (362.71,404.18) node [anchor=north west][inner sep=0.75pt]  [font=\scriptsize] [align=left] {$\displaystyle d-2k$};
\draw (244.71,394.75) node [anchor=north west][inner sep=0.75pt]  [font=\scriptsize] [align=left] {$\displaystyle d$};
\draw (307.29,389.9) node [anchor=north west][inner sep=0.75pt]  [font=\scriptsize] [align=left] {$\displaystyle k$};
\draw (304.71,439.9) node [anchor=north west][inner sep=0.75pt]  [font=\scriptsize] [align=left] {$\displaystyle i$};
\draw (291.29,469.32) node [anchor=north west][inner sep=0.75pt]  [font=\scriptsize] [align=left] {$\displaystyle d-k-i$};

\end{tikzpicture}

    \caption{Leaky tropical covers which contribute negatively resp.\ positively to $H_1(d,-(d-2k))$.}
    \label{fig:ex-genus1}
\end{figure}
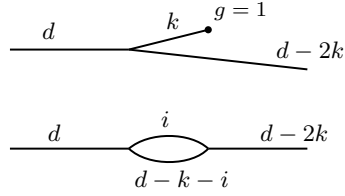
    
\end{example}

From now on we restrict to the case $g=0$, where leaky tropical covers have nonnegative multiplicity.

\begin{remark}\label{rem-k=0}
   If $k=0$, $H_{0}(\mathbf{x}, \mathbf{e})>0$ unless $\bfx=(0, \ldots, 0)$ and $n> |\mathbf{e}|+3$: for any degree $\bfx$ which is not zero, the existence of a tropical cover follows from Proposition \ref{prop-caterpillaregg}, since $\bfx$ must have negative entries. If $\bfx$ is zero, we must impose Psi-conditions that force any cover to consist of only one vertex, adjacent to all ends. Such a cover contributes positively. If we have less Psi-conditions, any cover needs to have at least one bounded edge, which must be of weight $0$ by the balancing condition, leading to a contradiction. Thus, there is no cover of degree $0$ and with $n> |\mathbf{e}|+3$.
\end{remark}

\begin{remark}\label{rem-k>0}
    In Theorem \ref{thm-positivity}, we consider the case $k\neq 0$. In the following, we assume without restriction that $k>0$. That is possible, since we can ''turn around'' any tropical $k$-leaky cover of degree $\bfx$, thus producing a tropical $(-k)$-leaky cover of degree $-\bfx$. 
\end{remark}




With the following lemma, we can deduce positivity of $k$-leaky double Hurwitz descendants from the positivity of $k$-leaky double Hurwitz numbers.

\begin{lemma}\label{lem-poswithdescendants}
If the $k$-leaky double Hurwitz number $H_0(\bfx)>0$ for some $\bfx$, then also the $k$-leaky double Hurwitz descendant $H_0(\bfx,\bfe)>0$ for any $\bfe$.    
\end{lemma}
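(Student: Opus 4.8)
The plan is to reduce the statement to the existence of a single tropical cover, and then to build that cover by hand from a cover witnessing $H_0(\bfx)>0$.

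By the Correspondence Theorem \ref{thm-corres} we have $H_0(\bfx,\bfe)=H^{\trop}_0(\bfx,\bfe)$, and in genus $0$ every term of the tropical sum is non-negative: by Remark \ref{rem-vertexmult} each vertex multiplicity equals $\frac{(\val(v)-3)!}{\prod_{i\in I_v}e_i!}>0$, the edge factors $\omega(e)$ are positive integers, and $1/|\Aut(\pi)|>0$. Hence it suffices to exhibit one genus $0$ $k$-leaky cover of degree $\bfx$ satisfying the Psi-conditions $\bfe$. Applying the same reasoning to $H_0(\bfx)=H_0(\bfx,\mathbf 0)>0$, we obtain a genus $0$ $k$-leaky cover $\pi\colon\Gamma\to T$ of degree $\bfx$ all of whose vertices are trivalent; its source $\Gamma$ is a tree with $n$ labelled ends and $N:=n-2$ vertices.

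The first ingredient is that contracting a bounded edge of a $k$-leaky cover produces another $k$-leaky cover of the same degree. Indeed, if $e$ joins vertices $v,w$ and we merge them into a genus $0$ vertex $u$, the expansion factors on all other edges and on the ends are unchanged, so they stay positive integers and the degree is still $\bfx$; the balancing \eqref{eq:leakybal} holds at $u$ because $d^l-d^r$ is additive under the merge while $k(2g(v)-2+\val(v))+k(2g(w)-2+\val(w))=k(2g(u)-2+\val(u))$; and the contracted map still covers a (coarser) metric line graph. Iterating, contracting a connected subtree of $\Gamma$ with $m$ vertices yields a single vertex of valence $3m-2(m-1)=m+2$. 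Therefore, to construct the cover we want, it is enough to find a partition of the vertex set of $\Gamma$ into connected subtrees $S_1,\dots,S_t$ with $|S_j|=1+\sum_{i\in\mathrm{ends}(S_j)}e_i$ for every $j$: contracting the edges internal to each $S_j$ then gives a genus $0$ $k$-leaky cover of degree $\bfx$ whose vertices have valence exactly $3+\sum_{i\in I_v}e_i$, i.e.\ which satisfies the Psi-conditions $\bfe$. (Stability is automatic: a piece with no end of positive $e_i$ has $|S_j|=1$ and stays trivalent, and all valences only increase under the contraction.)

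The second ingredient is that such a partition always exists; this is the combinatorial heart of the argument, and the only place where the hypothesis $|\bfe|\le n-3$ is used, via $\sum_i e_i\le N-1$. I would prove it by induction on $N$: if all $e_i=0$ the partition into singletons works; otherwise pick an end with $e_i>0$ at a vertex $v$, choose any neighbour $w$ of $v$ in $\Gamma$, contract the edge $vw$, decrease $e_i$ by one, and apply the inductive hypothesis to the resulting tree on $N-1$ vertices (its end-weights now sum to at most $N-2$). Reinserting $w$ into the piece containing the merged vertex turns the resulting partition into one for $\Gamma$ with the required sizes. I expect the main obstacle to be purely bookkeeping: verifying carefully that contraction-invariance and this partition induction interact correctly with the stability and Psi-conditions; there is no genuinely hard geometric input beyond the Correspondence Theorem and the positivity of genus $0$ vertex multiplicities.
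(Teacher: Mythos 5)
Your proposal is correct and follows essentially the same route as the paper: take a trivalent genus~$0$ cover witnessing $H_0(\bfx)>0$ and shrink bounded edges to create the valencies demanded by the Psi-conditions, then invoke the positivity of all genus~$0$ contributions via the Correspondence Theorem and Remark \ref{rem-vertexmult}. The only difference is that you spell out, via the partition-into-connected-subtrees induction, the combinatorial fact (using $|\bfe|\le n-3$) that the paper simply asserts when it says the edges ``can be shrunk in such a way that we produce the valencies required by $\bfe$''; that verification is sound.
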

\begin{proof}
    By definition and because of Remark \ref{rem-vertexmult}, in genus $0$, any leaky tropical cover contributes with positive multiplicity. Since $H_0(\bfx)>0$, there exists a tropical leaky cover of genus $0$ and degree $\bfx$. It has $3$-valent vertices. We can temporarily forget the order of the images of the vertices. If $\bfe\neq 0$, we can shrink bounded edges in such a way that we produce the valencies which are required by $\bfe$. We can then order the remaining vertices again in an arbitrary way (compatible with the images of the edges). The cover we produce in this way then contributes positively to $H_0(\bfx,\bfe)$ and we conclude $H_0(\bfx,\bfe)>0$.   
\end{proof}

To study the positivity of $k$-leaky double Hurwitz numbers, we first assume that $\bfx$ contains at least one entry strictly smaller than $k/2$.

\begin{proposition} \label{prop-caterpillaregg} 
    Let $k\geq 0$, $|\bfx|=k(n-2)$ and assume $\bfx$ has at least one entry $x_i<k/2$. Then there exists a caterpillar $k$-leaky cover of genus $0$ and degree $\bfx$ which contributes positively to the count $H_0(\bfx)$, see Figure \ref{fig:caterpillarandeggs}.
     As a consequence, $H_0(\bfx)>0$.
\end{proposition}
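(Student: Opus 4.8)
The plan is to recast the statement as a combinatorial assertion about linear orderings of the entries of $\bfx$, and then to exhibit such an ordering by hand, with the entry of size $<k/2$ doing the essential work.

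\emph{Reduction.} First I would establish the following dictionary: a genus $0$ caterpillar $k$-leaky cover of degree $\bfx$ amounts to a linear order $y_1,\dots ,y_n$ of the entries of $\bfx$ — two sitting on the left-hand spine vertex $v_1$, one on each interior spine vertex, two on the right-hand spine vertex $v_{n-2}$ — together with a placement of $v_1,\dots ,v_{n-2}$ on $\RR$. Cutting the spine after $v_j$ and summing the leaky balancing \eqref{eq:leakybal} over $v_1,\dots ,v_j$ forces the $j$-th bounded edge to have expansion factor $\omega_j=\bigl|\,jk-(y_1+\dots +y_{j+1})\,\bigr|$, and forces whether $v_{j+1}$ lies to the left or right of $v_j$ to be the sign of $jk-(y_1+\dots +y_{j+1})$; the places where this sign changes are the folds pictured in Figure \ref{fig:caterpillarandeggs}. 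Conversely any sign pattern is realised by genuine positions, and telescoping the cut identities recovers the balancing at each individual vertex, so — after inserting the two-valent vertices that make the map cellular, which disappear on passing to the minimal vertex set — one obtains an honest $k$-leaky cover satisfying the Psi-conditions $\bfe=0$. Its vertex multiplicities are all $1$ and its edge factors $\omega_j$ are positive (Remark \ref{rem-vertexmult}), so it contributes positively, and since every genus $0$ contribution is nonnegative this yields $H_0(\bfx)>0$. Thus the proposition reduces to: \emph{find an order $y_1,\dots ,y_n$ of the entries of $\bfx$ with $y_1+\dots +y_m\neq (m-1)k$ for all $m=2,\dots ,n-2$.}

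\emph{Constructing the order.} For $k=0$ I would sort the entries non-increasingly: since $\sum y_i=0$, the smallest entry is negative, and not all entries vanish, every partial sum $y_1+\dots +y_m$ with $m\le n-1$ is strictly positive (were $m$ minimal with $y_1+\dots +y_m\le 0$, then $y_m\le 0$, hence $y_m,\dots ,y_n\le 0$, so $y_1+\dots +y_m=-(y_{m+1}+\dots +y_n)\ge 0$ and then $y_{m+1}=\dots =y_n=0$, contradicting that the least entry is negative), in particular $\neq 0$. For $k\ge 1$, after reducing to $k>0$ (Remark \ref{rem-k>0}) and substituting $d_i=k-x_i$, the task becomes to order integers $d_i$ with $\sum d_i=2k$ and some $d_{i_0}>k/2$ so that no partial sum $D_m$ equals $k$ for $m=2,\dots ,n-2$. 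Writing $a=\max_i d_i>k/2$ and $\Sigma^+$ for the sum of the nonnegative $d_i$ (so $\Sigma^+\ge 2k$), I would try the order: the nonnegative $d_i$ other than $a$, then $a$, then the negative $d_i$ by increasing absolute value. If $\Sigma^+-a<k$ this works verbatim — the first block stays in $[0,\Sigma^+-a]\subseteq[0,k)$, the $a$-step jumps from below $k$ to $\Sigma^+\ge 2k$, and the last block descends monotonically from $\Sigma^+$ down to $2k$. If $\Sigma^+-a\ge k$ I would instead list the nonnegative $d_i$ other than $a$ in increasing order and insert $a$ at the last moment the running sum is still below $k$: since $a>k/2$ exceeds the spacing of those values, after inserting $a$ the running sum is $\ge k$, and the only thing to check is that it is not exactly $k$, after which the sum stays above $k$ through the remaining nonnegatives and then descends to $2k$.

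\emph{Main obstacle.} The hard part will be this last case $\Sigma^+-a\ge k$: one must argue that $a$ can always be inserted so that the running sum there equals neither $k$ nor $k-a$, possibly after also permuting the nonnegative entries, and that the finitely many configurations obstructing this either contradict $d_{i_0}>k/2$ or force a forbidden zero entry of $\bfx$. This is exactly where the hypothesis $x_{i_0}<k/2$ is indispensable: if every entry of $\bfx$ were $\ge k/2$ — for instance $\bfx=(2,2,2,2,2,2)$ with $k=3$ — then every order has $y_1+y_2+y_3=6=(3-1)k$, so no caterpillar cover exists at all (although $H_0(\bfx)$ may still be positive via non-caterpillar trees).
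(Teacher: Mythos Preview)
Your reduction to the ordering problem is correct and is a pleasant reformulation: a caterpillar cover exists iff one can order the entries as $y_1,\dots,y_n$ with $y_1+\cdots+y_m\neq (m-1)k$ for $2\le m\le n-2$, equivalently (with $d_i=k-x_i$) so that no partial sum $D_m$ equals $k$. Your treatment of $k=0$ and of Case~1 ($\Sigma^+-a<k$) is fine.

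The gap is in Case~2. Your ``insert $a$ at the last moment below $k$'' rule genuinely fails, not just hypothetically: take $k=10$, $\bfx=(3,3,3,7,14)$, so $d=(7,7,7,3,-4)$ with $a=7$. The sorted nonnegatives other than one copy of $a$ are $3,7,7$ with partial sums $3,10,17$; your rule inserts $a$ after the $3$, giving $D_2=3+7=10=k$. (Of course other orderings do work, e.g.\ $7,7,7,3,-4$.) So the ``main obstacle'' you flag is an actual obstruction to the algorithm as stated, and you have not supplied the repair. It can be fixed --- for instance, when the maximum $a$ is attained at least twice one may simply list all copies of $a$ first (then $D_m=ma>k$ for $m\ge 2$, and the remaining partial sums stay above $2k$), while if $a$ is unique your original rule does work since the equality $S_{q-1}+a=k$ would force $d_{(q)}=a$ --- but this casework is exactly what your write-up leaves undone.

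By contrast, the paper sidesteps all of this with a short induction on $n$: sort $x_1\ge\cdots\ge x_n$ (so $x_n<k/2$) and observe that $x_1+x_2>k$; fuse markings $1,2$ at a vertex with outgoing weight $x_1+x_2-k>0$ and apply the inductive hypothesis to $(x_1+x_2-k,x_3,\dots,x_n)$, which still has $x_n<k/2$. The single inequality $x_1+x_2>k$ (a three-line estimate using $x_n<k/2$ and $n\ge 4$) replaces your entire Case~2 analysis. Translated into your $d$-variables this is: the two \emph{smallest} $d_i$ satisfy $d_1+d_2<k$, so ordering the $d_i$ increasingly already gives $D_2<k$, and the same inequality persists after merging, yielding all $D_m\neq k$ inductively. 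So your route is viable but unfinished, and considerably heavier than the paper's two-line induction.
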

    
\begin{proof}
The case $n=3$ is trivial, since there the trivial graph with three legs is always a leaky tropical cover with multiplicity $1$ and so $H_0(\bfx)=1 > 0$. Assume that $n \geq 4$ and without loss of generality we order the markings such that $x_1 \geq x_2 \geq \ldots \geq x_n$ with $x_n<k/2$ by assumption. Then we have
\begin{equation} \label{eqn:inequality_claim}
    \textbf{Claim}: x_1 + x_2 > k \,.
\end{equation}
Indeed, assume on the contrary that $x_1 + x_2 \leq k$, then we have that $x_2 \leq k/2$ since otherwise $x_1 + x_2 \geq x_2 + x_2 > k$. By the ordering above, we then also have $x_3, \ldots, x_{n-1} \leq k/2$. Using these inequalities, we would have
\begin{align*}
    & \underbrace{x_1 + x_2}_{\leq k} + \underbrace{x_3 + \ldots + x_{n-1}}_{\leq (n-3)\cdot k/2} + x_n = k(n-2)\\
    \implies & x_n + (n-1)\cdot k/2 \geq k(n-2)\\
    \implies & x_n \geq k(n-2-\frac{n-1}{2})=k \cdot \frac{n-3}{2}
\end{align*}
Since $n \geq 4$, this gives a contradiction to the assumption $x_n<k/2$.

To conclude $H_0(\bfx)>0$ we construct a cover as follows: let $v_0$ be a vertex with markings $1,2$ and an outgoing edge, whose expansion factor is $\widetilde x = x_1 + x_2 - k > 0$ by \eqref{eqn:inequality_claim}. Then by induction, the Hurwitz number $H_0(\widetilde x, x_3, \ldots, x_n)$ is positive (since still $x_n<k/2$), and so there exists an associated leaky cover. Gluing the vertex $v_0$ in results in a cover for $H_0(\bfx)$ contributing positively, which is necessarily of the form illustrated in Figure \ref{fig:caterpillarandeggs}.
\end{proof}

     
    

\begin{figure}[tb]
    \centering

\tikzset{every picture/.style={line width=0.75pt}} 

\tikzset{every picture/.style={line width=0.75pt}} 

\begin{tikzpicture}[x=0.75pt,y=0.75pt,yscale=-1,xscale=1]

\draw    (230,330) -- (270,320) ;
\draw    (270,320) -- (330,320) ;
\draw    (220,310) -- (270,320) ;
\draw    (180,320) -- (220,310) ;
\draw    (180,300) -- (220,310) ;
\draw    (140,310) -- (180,300) ;
\draw    (140,280) -- (180,300) ;
\draw    (370,330) -- (330,320) ;
\draw    (460,310) -- (420,300) ;
\draw    (420,320) -- (380,310) ;
\draw    (330,320) -- (380,310) ;
\draw    (380,310) -- (420,300) ;
\draw    (420,300) -- (460,280) ;

\end{tikzpicture}

    \caption{A caterpillar cover: the leftmost vertices of the cover merge an end until all nonnegative ends are merged in, and the last vertices split off the negative ends.}
    \label{fig:caterpillarandeggs}
\end{figure}
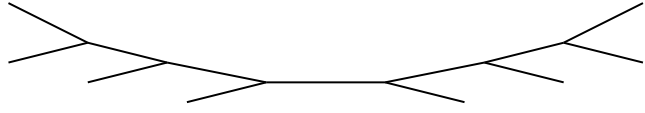

\begin{lemma} \label{Lem:classification_vanishing_e0}
     Let $k>0$ be any number and  $|\bfx|=k(n-2)$. Then the $k$-leaky double Hurwitz number $H_0(\bfx)$ is non-negative, and it is strictly positive if and only if $\bfx$ contains an entry which is not a positive multiple of $\frac{k}{2}$.
\end{lemma}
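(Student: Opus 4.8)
The plan is to establish the three assertions — non-negativity, the positivity criterion, and the vanishing criterion — separately, working throughout with the tropical description $H_0(\bfx) = H_0^{\trop}(\bfx, \mathbf{0})$ from the Correspondence Theorem \ref{thm-corres}. Non-negativity is immediate: by Remark \ref{rem-vertexmult} every genus-$0$ leaky cover contributes with a positive vertex multiplicity, so $H_0(\bfx)$ is a sum of non-negative rationals. Hence $H_0(\bfx) > 0$ if and only if there exists at least one genus-$0$ leaky cover of degree $\bfx$ (necessarily with all vertices $3$-valent, by the Psi-conditions for $\bfe = \mathbf{0}$), and $H_0(\bfx) = 0$ if and only if no such cover exists. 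I will freely use the standard fact (as in the proof of Theorem \ref{thm-wc}) that the expansion factor of a bounded edge $e$ of such a cover, separating the ends indexed by $I_e$ from the rest, equals $|\sum_{i\in I_e} x_i - k(|I_e|-1)|$; in particular a $3$-valent tree supports a leaky cover only if $\sum_{i\in I_e} x_i \neq k(|I_e|-1)$ for every bounded edge $e$.

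For the positivity direction I would prove, by induction on $n$, the slightly stronger statement: if not all entries of $\bfx$ are positive multiples of $\tfrac k2$, then $H_0(\bfx) > 0$. The base case $n=3$ holds since the trivial one-vertex cover is always a valid leaky cover of multiplicity $1$. For $n \geq 4$: if some entry satisfies $x_i < \tfrac k2$, then $H_0(\bfx) > 0$ by Proposition \ref{prop-caterpillaregg}. Otherwise all $x_i \geq \tfrac k2$, in particular all positive, and I choose a pair of indices $s \neq t$ to merge as follows. Calling an index \emph{bad} if $x_j$ is not a positive multiple of $\tfrac k2$ and \emph{good} otherwise: if there is a good index, pick a bad index $j$ and a good index $s$ and take $\{s,t\} = \{j,s\}$; if all indices are bad, take $\{s,t\}$ arbitrary. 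In both cases $x_s + x_t > k$ (equality would force $x_s = x_t = \tfrac k2$, which are good), so the reduced vector $\bfx' = (x_s + x_t - k,\ (x_i)_{i\neq s,t})$ on $n-1$ entries is a valid degree vector, and it still has a bad entry: in the first case the merged entry $x_j + x_s - k$ is not a positive multiple of $\tfrac k2$, since $x_j$ is not while $x_s$ and $k$ are multiples of $\tfrac k2$ and $x_j + x_s - k > 0$; in the second case the $n-2 \geq 1$ untouched entries remain bad. By the inductive hypothesis $H_0(\bfx') > 0$, so there is a genus-$0$ leaky cover $\Gamma'$ of degree $\bfx'$ with positive multiplicity; attaching to the end of $\Gamma'$ of weight $x_s + x_t - k$ a new $3$-valent vertex carrying the ends $s,t$ (via a bounded edge of weight $x_s + x_t - k$) yields a valid genus-$0$ leaky cover of degree $\bfx$ of positive multiplicity, so $H_0(\bfx) > 0$.

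For the vanishing direction, suppose all entries of $\bfx$ are positive multiples of $\tfrac k2$, say $x_i = m_i \tfrac k2$ with $m_i \in \ZZ_{\geq 1}$, so that $\sum_i m_i = 2(n-2)$. First observe that $k$ must be even (if $k$ is odd, each $x_i$ is a positive multiple of $k$, forcing $\sum x_i \geq nk > (n-2)k$) and that $n \geq 4$ (for $n = 3$ one would need $\sum m_i = 2 < 3$). The crux is then a purely combinatorial lemma: for every $n \geq 4$, every $3$-valent tree $T$ with leaves $\{1,\dots,n\}$ and any weights $m_i \in \ZZ_{\geq 1}$ with $\sum_i m_i = 2(n-2)$ has a bounded edge $e$ with $\sum_{i\in I_e} m_i = 2(|I_e| - 1)$. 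I would prove this by induction on $n$, with base case $n=4$ (the unique bounded edge of the double cherry, all $m_i = 1$, satisfies $\sum_{i\in I_e}m_i = 2$): pick a cherry with leaves $a,b$; if $m_a + m_b = 2$ the cherry edge works; otherwise contract the cherry to a single leaf of weight $m_a + m_b - 2 \geq 1$, apply the inductive hypothesis to the resulting tree on $n-1$ leaves, and verify that the edge it produces still satisfies the required identity in $T$ (checking separately whether the contracted leaf lies on its distinguished side). Applying the lemma to any candidate source tree, one of its bounded edges is forced to have expansion factor $\tfrac k2\,|\sum_{i\in I_e} m_i - 2(|I_e|-1)| = 0$, so no genus-$0$ leaky cover of degree $\bfx$ exists and $H_0(\bfx) = 0$.

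The main obstacle is the re-routing step in the positivity induction, where the merge pair $\{s,t\}$ must be chosen so that simultaneously $x_s + x_t > k$ and the reduced vector still fails to be "all positive multiples of $\tfrac k2$" — which is why the argument splits according to whether a good index exists — together with the combinatorial tree lemma powering the vanishing direction. Both are elementary, but require the bookkeeping above to be executed carefully.
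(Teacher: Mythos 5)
Your proposal is correct, and its overall skeleton (non-negativity from positive genus-$0$ vertex multiplicities, positivity by induction on $n$ using Proposition \ref{prop-caterpillaregg} when some $x_i<k/2$ and otherwise splitting off a cherry vertex, vanishing by a cherry/contraction analysis) matches the paper's. The differences are in packaging. For positivity, where the paper distinguishes ``exactly two'' from ``at least three'' entries that are not positive multiples of $k/2$ and, in the first case, has to locate a good entry equal to $k/2$ to merge with, you simply merge one bad entry with one good entry (or any two entries when all are bad); since every entry is $>k/2$ unless it equals $k/2$, the merged weight $x_s+x_t-k$ is automatically positive and the reduced vector retains a bad entry, which streamlines the case analysis and sidesteps the paper's observation that bad entries must come in pairs. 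For vanishing, the paper argues through the Hurwitz numbers themselves: it peels a cherry off a hypothetical contributing cover and invokes the inductive vanishing of the smaller leaky Hurwitz number to force a zero-weight edge. You instead isolate a standalone combinatorial lemma --- every trivalent tree with leaf weights $m_i\ge 1$ summing to $2(n-2)$ has a bounded edge $e$ with $\sum_{i\in I_e}m_i=2(|I_e|-1)$ --- proved by cherry contraction, and then conclude that every candidate source tree is forced to carry an edge of expansion factor $0$, so no cover exists at all. The two inductions are isomorphic at heart, but your formulation has the advantage of making the obstruction purely graph-theoretic and independent of any Hurwitz-number bookkeeping (and your contraction step does close correctly: if the contracted leaf lies on the distinguished side of the edge produced by induction, the identity transfers because the cherry replaces weight $m_a+m_b-2$ by $m_a+m_b$ while the leaf count on that side grows by one). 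Your side remarks that $k$ must be even and $n\ge 4$ in the vanishing case are consistent with the paper's statement but not logically needed once the normalization by $k/2$ is in place.
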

\begin{proof}
We prove the theorem by induction on $n \geq 3$. For $n=3$ the leaky Hurwitz number is always $1$, as seen above. On the other hand, the condition $x_1 + x_2 + x_3=k$ can never be satisfied if $x_i = m_i \cdot k/2$ with $m_i>0$, so the corresponding clause of the theorem is empty, and thus the leaky Hurwitz number is predicted to always be positive, which we verified.

For $n=4$, we know by Proposition \ref{prop-caterpillaregg} that $H_0(\bfx)$ is positive unless all $x_i$ are at least $k/2$. Since they sum to $2k$, the only remaining possibilty is $x_1 = \ldots = x_4 = k/2$ with $k$ even, in which case the $k$-leaky Hurwitz number vanishes, since all associated tropical covers have weight $k/2 + k/2 - k = 0$ on the bounded edge.

For $n>4$, we distinguish two cases: if $k$ is even and all $x_i$ are of the form $x_i=m_i \cdot k/2$ with $m_i > 0$, take any leaf vertex $v_0$ of a corresponding leaky cover. If the two $x_i, x_j$ adjacent to it are equal to $k/2$, the outgoing edge has slope $0$ and thus the cover does not contribute to $H_0(\bfx)$. Otherwise, the outgoing slope is a positive multiple $\widetilde m \cdot k/2$, so the remaining graph is one contributing to a leaky Hurwitz number $H_0(\widetilde m \cdot k/2, x_1, \ldots, \widehat x_i, \ldots, \widehat x_j, \ldots, x_n)$, which vanishes by induction. Thus the remaining graph has multiplicity zero, and thus has a bounded edge of weight $0$. Hence the original graph contributed with multiplicity $0$ as well and so  $H_0(\bfx)=0$.

Conversely, assume that not all numbers are of the form $x_i = m_i \cdot k/2$ for $m_i \in \mathbb{N}_{>0}$, in which case we want to show $H_0(\bfx)>0$. If any of them satisfied $x_i<k/2$, we would conclude the positivity of $H_0(\bfx)$ from Proposition \ref{prop-caterpillaregg}. Thus we can assume $x_i \geq k/2$. 

If there is at least one entry of $\bfx$, which is not a positive multiple of $k/2$, then in fact there must be two since all the $x_i$ are positive and their sum is $k(n-2)$. Assume the number of these entries is precisely two, and say they are given by $x_1, x_2$. Then necessarily $x_n=k/2$ since $n \geq 4$. Taking $v_0$ a vertex with markings $1,n$, its outgoing expansion factor is 
$$\widetilde x = \underbrace{x_1}_{\geq k/2 \text{ and } \neq k/2} + x_n -k > k/2 + k/2 -k = 0.$$ 
By induction, the number $H_0(\widetilde x, x_2, \ldots, x_{n-1})$ is positive since $x_2$ is not a positive multiple of $k/2$, and gluing $v_0$ to any cover contributing to that number gives a cover with positive contribution to $H_0(\bfx)$.

Finally, if there are at least three entries (say $x_1, x_2, x_3$) of $\bfx$ not given by positive multiples of $k/2$, then fusing $x_1, x_2$ at a vertex $v_0$ as above still has positive outgoing expansion factor (since $\widetilde x = x_1 + x_2 - k>0$), and we conclude as above using the proven case $H_0(\widetilde x, x_3, \ldots, x_n)>0$.
\end{proof}

By Lemma \ref{lem-poswithdescendants}, we can conclude:
\begin{corollary}\label{cor-notmultk/2notzero}
    Let $k>0$ be any number, and $|\bfx|=k(n-2)$. Assume  $\bfx$ contains an entry which is not a positive multiple of $\frac{k}{2}$. Then the $k$-leaky double Hurwitz descendant $H_0(\bfx,\bfe)>0$.
\end{corollary}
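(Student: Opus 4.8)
The plan is to deduce this positivity statement directly by chaining the two preceding results, so the proof will be very short. First I would invoke Lemma~\ref{Lem:classification_vanishing_e0}: since $k>0$, $|\bfx| = k(n-2)$, and by hypothesis $\bfx$ contains an entry that is not a positive multiple of $\frac{k}{2}$, that lemma yields $H_0(\bfx) > 0$. Concretely this means there is at least one genus-zero $k$-leaky cover of degree $\bfx$ with no Psi-conditions (hence with all vertices trivalent) whose multiplicity is nonzero; by Remark~\ref{rem-vertexmult} every such cover contributes nonnegatively, so the count is strictly positive. Recall also that the substantive input here is the caterpillar construction of Proposition~\ref{prop-caterpillaregg} together with the induction on $n$ carried out in the proof of Lemma~\ref{Lem:classification_vanishing_e0}.

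Second, I would feed this into Lemma~\ref{lem-poswithdescendants}, which states exactly that $H_0(\bfx) > 0$ implies $H_0(\bfx, \bfe) > 0$ for an arbitrary vector $\bfe \in \mathbb{Z}_{\geq 0}^n$ of descendant exponents. The mechanism there is to start from a trivalent cover with positive contribution, temporarily forget the order of the vertex images, contract suitable bounded edges so that the vertex adjacent to the ends labeled by $I_v$ acquires valence $\sum_{i \in I_v} e_i + 3$ as demanded by the Psi-conditions \eqref{eq:psicondition} (using $g(v)=0$), and then re-order the vertex images in a way compatible with the subdivision of the target line. This produces a $k$-leaky cover satisfying the Psi-conditions $\bfe$ with strictly positive multiplicity, so $H_0(\bfx, \bfe) > 0$.

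There is no genuine obstacle: the content of the corollary is entirely contained in Lemmas~\ref{Lem:classification_vanishing_e0} and~\ref{lem-poswithdescendants}. The only point that deserves a line of care is that the edge-contraction step does not run into a vanishing edge weight or push a vertex below valence three, but this is automatic in genus zero, since contracting an edge only removes balancing constraints and the Psi-conditions require valence at least $3$ at every vertex.
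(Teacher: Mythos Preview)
Your proof is correct and follows exactly the paper's approach: the corollary is stated immediately after Lemma~\ref{Lem:classification_vanishing_e0} with the one-line justification ``By Lemma~\ref{lem-poswithdescendants}, we can conclude,'' which is precisely the chaining of Lemma~\ref{Lem:classification_vanishing_e0} and Lemma~\ref{lem-poswithdescendants} that you spell out. Your additional explanatory remarks about the contraction mechanism are fine but not needed.
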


\begin{proposition}\label{lem-multk/2withPsi>0}
  Let $k>0$ be even and $|\bfx|=k(n-2)$. Assume $\bfx$ contains only positive integer multiples of $\frac{k}{2}$, $x_i=m_i\cdot \frac{k}{2}$.
Let $e \in \mathbb{Z}_{\geq 0}^n$ with $0 \leq |e| \leq n-3$ and  $I\subset \{1,\ldots,n\}$ be an index set of size $r$ such that \begin{equation}\label{eq:condforpos}\sum_{i\in I} e_i\geq \sum_{i\in I}m_i-r+1.\end{equation}
 Then $H_0(\bfx,\bfe)>0$.
  \end{proposition}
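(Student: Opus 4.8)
The plan is to exhibit a single genus-zero $k$-leaky cover of degree $\bfx$ satisfying the Psi-conditions $\bfe$; by Theorem~\ref{thm-corres} and Remark~\ref{rem-vertexmult} such a cover contributes with strictly positive multiplicity to $\mathrm{H}_0(\bfx,\bfe)=\mathrm{H}^{\trop}_0(\bfx,\bfe)$, so its mere existence forces $\mathrm{H}_0(\bfx,\bfe)>0$. By Remark~\ref{rem-k>0} we may assume $k>0$. First, the hypothesis \eqref{eq:condforpos} reads $\sum_{i\in I}(m_i-e_i)\le |I|-1$; being a sum of $|I|$ integers bounded above by $|I|-1$, it forces $m_{i^\ast}-e_{i^\ast}\le 0$ for at least one $i^\ast\in I$. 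So it suffices to prove: \emph{if $k>0$ is even, $x_i=m_ik/2$ with all $m_i\in\mathbb{N}_{>0}$, $0\le|\bfe|\le n-3$, and $e_n\ge m_n$ for some marking (renamed) $n$, then $\mathrm{H}_0(\bfx,\bfe)>0$}. I prove this by induction on $n$; note $n=3$ is vacuous (no such $\bfx$ exists) and for $n=4$ necessarily $\bfx=(\tfrac k2,\tfrac k2,\tfrac k2,\tfrac k2)$ with $|\bfe|=1=n-3$.

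Whenever $|\bfe|=n-3$, the cover with one $n$-valent vertex carrying all markings is $k$-leaky and satisfies the Psi-conditions, with vertex multiplicity $\binom{n-3}{e_1,\dots,e_n}>0$; so assume $|\bfe|\le n-4$. I construct the cover by gluing two smaller ones along a new bounded edge. Pick a subset $N\subseteq\{1,\dots,n-1\}$ with $\sum_{i\in N}(1-e_i)=2$ and $\sum_{i\in N}(2-m_i)\le 1$, set $P=\{1,\dots,n\}\setminus N$ (so $n\in P$) and $\omega:=\sum_{i\in N}x_i-k(|N|-1)=\tfrac k2\bigl(2-\sum_{i\in N}(2-m_i)\bigr)$, which by the second condition is a positive integer multiple $\widetilde m\cdot k/2$ of $k/2$. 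The first condition gives $\sum_{i\in N}e_i=|N|-2$ and, since $|\bfe|\le n-4$, also $\sum_{i\in P}e_i\le|P|-2$, so $\bigl((e_i)_{i\in N},0\bigr)$ and $\bigl((e_i)_{i\in P},0\bigr)$ are admissible Psi-vectors on genus-zero curves with $|N|+1$, resp.\ $|P|+1$, markings. The vector $\bigl((x_i)_{i\in N},-\omega\bigr)$ has a strictly negative entry, so by Lemma~\ref{Lem:classification_vanishing_e0} its $k$-leaky double Hurwitz number is positive, hence so is $\mathrm{H}_0\bigl((x_i)_{i\in N},-\omega;(e_i)_{i\in N},0\bigr)$ by Lemma~\ref{lem-poswithdescendants}. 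The vector $\bigl((x_i)_{i\in P},\omega\bigr)$ again has all entries positive integer multiples of $k/2$, still carries the marking $n$ with $e_n\ge m_n$ (the new marking has $e=0<\widetilde m$), and has $|P|+1<n$ markings, so $\mathrm{H}_0\bigl((x_i)_{i\in P},\omega;(e_i)_{i\in P},0\bigr)>0$ by the inductive hypothesis. Choosing a positive-multiplicity cover for each side and gluing them along their two legs of weight $\omega$ — re-ordering, if necessary, the images of the vertices of the two pieces to obtain a genuine map to a subdivided line, exactly as in the reglueing step in the proof of Theorem~\ref{thm-wc} — produces a genus-zero $k$-leaky cover of degree $\bfx$ with Psi-conditions $\bfe$ whose automorphism group is the product of the two, so whose multiplicity equals $\omega$ times the product of the two multiplicities, hence is positive. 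Therefore $\mathrm{H}_0(\bfx,\bfe)>0$.

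It remains to produce the set $N$. Since $s:=\#\{i:e_i\ge 1\}\le|\bfe|\le n-4$ and $n$ is one of these indices, there are at least $n-s\ge 4$ markings in $\{1,\dots,n-1\}$ with $e_i=0$. If two of them, say $a,b$, have $m_a+m_b\ge 3$, put $N=\{a,b\}$; then $\sum_N(1-e_i)=2$ and $\sum_N(2-m_i)=4-(m_a+m_b)\le 1$. Otherwise all markings with $e_i=0$ have $m_i=1$; then $\sum_i(m_i-e_i)=2(n-2)-|\bfe|\ge n$ while $m_n-e_n\le 0$, so some $c\neq n$ satisfies $m_c-e_c\ge 2$ (necessarily $e_c\ge 1$, since every $e=0$ marking has $m=1$). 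One checks $s+e_c\le|\bfe|+1\le n-3$, so there are at least $e_c+1$ markings with $m_i=1$, $e_i=0$ other than $c$ and $n$, and we set $N=\{c\}$ together with $e_c+1$ of them: then $\sum_N(1-e_i)=(1-e_c)+(e_c+1)=2$ and $\sum_N(2-m_i)=(2-m_c)+(e_c+1)=e_c+3-m_c\le 1$, and in both cases $|P|\ge 2$. I expect this last paragraph — the second case, and the simultaneous bookkeeping of $|N|,|P|\ge 2$, "enough markings with $m_i=1,e_i=0$", and admissibility of the two Psi-vectors — to be the only genuinely delicate part; it is all driven by the three numerical facts $e_n\ge m_n$, $\sum_i m_i=2(n-2)$, and $|\bfe|\le n-4$, together with the elementary observation that at least four of the $m_i$ equal $1$.
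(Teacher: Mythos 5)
Your argument is correct, and it takes a genuinely different route from the paper's proof. The paper reduces to the case where $\bfe$ is supported on $I$ with all $e_i>0$ there, then builds the cover in one shot: a single rightmost vertex $v_R$ carrying all markings of $I$ together with $s-1$ further ends of smallest weight, with the key numerical input being the claim that the remaining half-edge weight $m_L$ is positive (proved by a contradiction using the ordering of the $m_i$), after which Proposition \ref{prop-caterpillaregg} supplies a caterpillar to complete the cover. You instead observe that condition \eqref{eq:condforpos} for some subset $I$ is equivalent to the singleton condition $e_{i^\ast}\ge m_{i^\ast}$ for some marking (a nice simplification of the hypothesis, which in particular shows the vanishing criterion of Theorem \ref{thm-positivity} can be phrased marking-by-marking), and then run an induction on $n$: split off a subset $N$ avoiding the special marking with $\sum_{N}(1-e_i)=2$ and $\sum_N(2-m_i)\le 1$, so the separating edge has positive weight $\omega=\widetilde m\,k/2$, handle the $N$-side by Lemma \ref{Lem:classification_vanishing_e0} (or Proposition \ref{prop-caterpillaregg}) plus Lemma \ref{lem-poswithdescendants}, handle the $P$-side by the inductive hypothesis, and glue (placing one piece entirely to the left resolves the interlacing, as in the regluing step of Theorem \ref{thm-wc}). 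I checked the delicate bookkeeping you flag: the case analysis producing $N$, the counts $n-s\ge 4$ and $s+e_c\le n-3$, the admissibility of both Psi-vectors, the degree conditions on both pieces, and the fact that $|P|\ge 3$ is automatic from $\sum_{i\in P}e_i\le |P|-2$ and $e_n\ge 1$; all of it works, and the glued cover indeed satisfies the Psi-conditions since the glued leg carries exponent $0$. What each approach buys: the paper's construction is non-inductive and exhibits a single explicit cover with one "fat" vertex absorbing all the descendant conditions on $I$, at the cost of the reordering conventions and the separate proof of the claim $m_L>0$; yours trades that for an induction and the combinatorial construction of $N$, but needs only the elementary singleton reformulation and gluing, and arguably makes the role of the hypothesis (some $e_i\ge m_i$) more transparent.
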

{
\begin{proof}

Since all the edge weights involved in the proof of this Proposition are integral multiples of $\frac{k}{2}$, we divide this factor out from all weights. The degree condition for a  leaky cover then becomes
\begin{equation}\label{leakybal}
    \sum m_i = 2n - 4.
\end{equation}
Note that \eqref{leakybal} also expresses in this normalization the leaky balancing condition at a vertex of a leaky cover, where the $m_i$'s are the weights of the incident edges, and $n$ is the valence of the vertex.
We make the following simplifying assumptions:
\begin{enumerate}
    \item for every $i\in I$, $e_i > 0$;
    \item for every $j\not\in I$, $e_j = 0$.
\end{enumerate}

We prove that these two assumptions do not cause any loss of generality. If $i \in I$  has $e_i = 0$, \eqref{eq:condforpos}  is satisfied by the subset $I\smallsetminus \{i\}$; thus given any subset $I$ satisfying \eqref{eq:condforpos}, we may replace it with the subset of its elements with strictly positive $e_i$'s, which satisfies condition $(1)$.

Assume the vector of Psi conditions $\mathbf{e}$ is supported on the subset $I\subseteq [n]$  of size $r$, and $\sum_{i\in I} e_i\geq \sum_{i\in I}m_i-r+1$.
If there exists a $k$-leaky tropical cover of degree $\bfx$ satisfying the Psi-conditions given by $\mathbf{e}$, then  for any entry-wise larger vector of Psi conditions, one can construct a $k$-leaky tropical cover as in Lemma \ref{lem-poswithdescendants} by shrinking edges. Thus condition $(2)$ poses no restriction.

With the simplifying assumptions in place, the strategy of proof is as follows: we construct a $k$-leaky cover starting from the rightmost vertex $v_R$, to which we attach all the ends in $I$; the Psi condition determines the valency of $v_R$, which requires adding  additional $s$ half-edges at the vertex. The leaky balancing condition determines the total weight of edges and ends incident to  $v_R$. We can connect all but one of the $s$ half-edges directly to ends in such a way that the weight on the last half-edge (which is determined) is still positive. To complete the picture, we are looking for a leaky cover with a bunch of left ends and exactly one  right end to glue to the remaining half-edge of $v_R$. The existence of such a graph is guaranteed by Proposition \ref{prop-caterpillaregg}. Now for the details. 

Assume without loss of generality that $I = [1,r] \subseteq [n]$, and that $m_{r+1}\leq  m_{r+2}\leq \ldots \leq m_{n}$.

Let $s = |\mathbf{e}|-r+3$, and note that $3\leq s\leq n-r$; the first inequality holds because of condition $(1)$, the second because $|\mathbf{e}|\leq n-3$. Let $\Gamma_R$ be a graph consisting of a single vertex $v_R$ of valence $r+s$. Assign weights $m_1, \ldots, m_{r+s-1}$ to all but one of the edges of $\Gamma_R$.

By \eqref{leakybal}, the weight at the last end of $\Gamma_R$ equals \begin{equation}
 m_L :=  2(r+s)-4 -\sum_{i=1}^{r+s-1}m_i.  
\end{equation}
 \noindent {\bf Claim:} $m_L>0$.

Assuming the claim, consider $\mathbf{x'} = (m_{r+s}, \ldots, m_n, -m_L)$, a vector of length $n-r-s+2$. We observe that $\mathbf{x'}$ satisfies the degree condition to admit a $k$-leaky  cover:
 \begin{align*}
     \sum_{i = r+s}^n m_i - m_L & = \sum_{i = 1}^n m_i -2(r+s)+4\\
     &= 2n - 4 -2(r+s) +4\\
     & = 2(n-r-s+2) -4.\\
 \end{align*}
Since $-m_L$ is negative, and so in particular strictly less than one, by Proposition \ref{prop-caterpillaregg} there exists a caterpillar leaky cover of degree $\mathbf{x'}$. Attaching this cover to the end of weight $m_L$ of $\Gamma_R$ produces a leaky cover of degree $\mathbf{x}$, with a rightmost vertex $v_R$ of valence
\begin{equation}\label{eq:psigood}
    r+s = |\mathbf{e}|+3.
\end{equation}
By \eqref{eq:psigood}, this graph satisfies the Psi condition $\mathbf{e}$ and contributes positively to $H_0(\mathbf{x},\mathbf{e})$. Thus the Proposition is proved modulo proving the claim.
 
\end{proof}

\begin{proof}[Proof of Claim]
Assume $m_L \leq 0$, giving
\begin{align*}
    2(r+s)-4 & \leq \sum_{i = 1}^{r+s-1} m_i  =  \sum_{i = 1}^{r} m_i+ \sum_{i = r+1}^{r+s-1} m_i\\
    & \stackrel{\eqref{eq:condforpos}}{\leq} |\mathbf{e}| +r -1 + \sum_{i = r+1}^{r+s-1} m_i\\
    & \stackrel{\eqref{eq:psigood}}{=} r+s-3 +r -1 + \sum_{i = r+1}^{r+s-1} m_i.
\end{align*}
Simplifying, we obtain
\begin{equation}
   \sum_{i = r+1}^{r+s-1} m_i \geq s, 
\end{equation}
so one of the $s-1$ summands $m_i$ on the left-hand side above must be at least $2$. In particular, since we choose to order the weights after $r$ in increasing order,  $m_i\geq 2$ for $i\geq r+s-1$.
This gives us the following contradiction:
\begin{align*}
    2n-4 = \sum_{i = 1}^{n} m_i &= \sum_{i = 1}^{r+s-1} m_i+\sum_{i = r+s}^{n} m_i\\
    & \geq  2(r+s)-4 + 2(n-r-s+1) = 2n-2. \qedhere
\end{align*}

\end{proof}}

 \begin{figure}
     \centering

\tikzset{every picture/.style={line width=0.75pt}} 

\begin{tikzpicture}[x=0.75pt,y=0.75pt,yscale=-1,xscale=1]

\draw    (260,390) -- (300,350) ;
\draw    (245,370) -- (300,350) ;
\draw    (250,305) -- (300,350) ;
\draw    (156.86,301.04) -- (184,302.71) ;
\draw    (250,335) -- (300,350) ;
\draw    (270,400) -- (300,350) ;
\draw    (240,360) -- (300,350) ;
\draw   (180,305) .. controls (180,302.24) and (195.67,300) .. (215,300) .. controls (234.33,300) and (250,302.24) .. (250,305) .. controls (250,307.76) and (234.33,310) .. (215,310) .. controls (195.67,310) and (180,307.76) .. (180,305) -- cycle ;
\draw   (180,335) .. controls (180,332.24) and (195.67,330) .. (215,330) .. controls (234.33,330) and (250,332.24) .. (250,335) .. controls (250,337.76) and (234.33,340) .. (215,340) .. controls (195.67,340) and (180,337.76) .. (180,335) -- cycle ;
\draw    (152,305.32) -- (180,305) ;
\draw    (157.71,311.32) -- (186,307.86) ;
\draw    (154.29,330.47) -- (182.57,333.29) ;
\draw    (162.86,338.18) -- (185.14,337.9) ;

\draw (212.05,304.71) node [anchor=north west][inner sep=0.75pt]  [font=\scriptsize] [align=left] {$\displaystyle \vdots $};
\draw (226.43,378.57) node [anchor=north west][inner sep=0.75pt]  [font=\scriptsize] [align=left] {$\displaystyle I$};
\draw (132.33,299.57) node [anchor=north west][inner sep=0.75pt]  [font=\scriptsize] [align=left] {$\displaystyle J_{1}$};
\draw (256.9,362.43) node [anchor=north west][inner sep=0.75pt]  [font=\scriptsize] [align=left] {$\displaystyle \vdots $};
\draw (134.9,331.29) node [anchor=north west][inner sep=0.75pt]  [font=\scriptsize] [align=left] {$\displaystyle J_{s}$};
\draw (102.43,310.32) node [anchor=north west][inner sep=0.75pt]  [font=\scriptsize] [align=left] {$\displaystyle I^{c}$};
\draw (138.05,306.14) node [anchor=north west][inner sep=0.75pt]  [font=\scriptsize] [align=left] {$\displaystyle \vdots $};

\draw (300.05,350.14) node [anchor=north west][inner sep=0.75pt]  [font=\scriptsize] [align=left] {$v_R$};
\draw (210.9,350.43) node [anchor=north west][inner sep=0.75pt]  [font=\tiny] [align=left] {$m_{i_1} \frac{k}{2}$};

\draw (260.9,400.43) node [anchor=north west][inner sep=0.75pt]  [font=\tiny] [align=left] {$m_{i_r} \frac{k}{2}$};

\draw (260.9,300.43) node [anchor=north west][inner sep=0.75pt]  [font=\tiny] [align=left] {$w_{1} \frac{k}{2}$};

\draw (242.9,318) node [anchor=north west][inner sep=0.75pt]  [font=\tiny] [align=left] {$w_{s} \frac{k}{2}$};
\end{tikzpicture}

     \caption{The rightmost vertex of a leaky cover whose degree contains only positive multiples of $\frac{k}{2}$.}
     \label{fig:lastvertex}
 \end{figure}
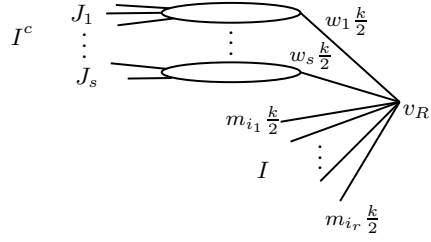

Finally, we generalize the vanishing part $H_0(\bfx)=0$ for $\bfx$ only containing positive multiples of $k/2$ from Lemma \ref{Lem:classification_vanishing_e0} to the numbers $H_0(\bfx, \bfe)$, where we see that an additional condition is required (which is always satisfied for $\bfe =0$).

    \begin{proposition}\label{prop-multk/2is0}
        Let $k>0$ be even and $|\bfx|=k(n-2)$. Assume $\bfx$ contains only positive multiples of $\frac{k}{2}$, $x_i=m_i\cdot \frac{k}{2}$.
        Let $e \in \mathbb{Z}_{\geq 0}^n$ with $0 \leq |e| \leq n-3$  and assume that for any subset $I\subset \{1,\ldots,n\}$ with $|I| = r$, we have \begin{equation}\label{eq:heresthehup}
          \sum_{i\in I}e_i<\sum_{i\in I}m_i-r+1.  
        \end{equation}
        Then $H_0(\bfx,\bfe)=0$.
    \end{proposition}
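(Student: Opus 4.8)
The plan is to prove the \emph{vanishing} directly, by showing that under hypothesis \eqref{eq:heresthehup} there is \emph{no} stable genus-$0$ $k$-leaky cover of degree $\bfx$ (with all expansion factors strictly positive) satisfying the Psi-conditions $\bfe$. By the Correspondence Theorem \ref{thm-corres} this yields $H_0(\bfx,\bfe)=H^{\trop}_0(\bfx,\bfe)$, and by Remark \ref{rem-vertexmult} every leaky cover contributes to $H^{\trop}_0(\bfx,\bfe)$ with a \emph{strictly positive} multiplicity (a product of positive edge factors, a positive automorphism factor, and multinomial vertex multiplicities); so if there are no such covers the sum is empty and $H_0(\bfx,\bfe)=0$.

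The argument will be localized at a single, well-chosen vertex. As in the proof of Proposition \ref{lem-multk/2withPsi>0}, I would divide all weights by $\tfrac k2$, so that ends of a leaky cover of degree $\bfx$ have (normalized) weights $m_1,\dots,m_n$ and the leaky balancing condition at a genus-$0$ vertex $v$ becomes $d_v^l-d_v^r=2(\val(v)-2)$. Since every $x_i=m_i\tfrac k2$ is strictly positive, every end of the source maps to $-\infty$, i.e.\ all ends point to the left. Suppose $\pi\colon\Gamma\to T$ is a hypothetical cover as above and let $v_R$ be a vertex of $\Gamma$ of maximal image. Every bounded edge at $v_R$ leads to a vertex of strictly smaller image (expansion factors being positive, edges have positive length), hence points left, and the ends at $v_R$ point left as well; thus all flags at $v_R$ point left and $d_{v_R}^r=0$. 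Write $J_R\subseteq\{1,\dots,n\}$ for the set of ends at $v_R$ and $E_R$ for its bounded edges, so $\val(v_R)=|J_R|+|E_R|$. Then balancing reads
\[
\sum_{i\in J_R}m_i+\sum_{e\in E_R}m(e)=d_{v_R}^l=2\val(v_R)-4 .
\]
The Psi-condition at $v_R$ is $\val(v_R)=\sum_{i\in J_R}e_i+3$, and $m(e)\ge 1$ for each $e\in E_R$ since $\pi$ is an honest leaky cover, so $\sum_{e\in E_R}m(e)\ge |E_R|=\val(v_R)-|J_R|$. Combining these,
\[
\sum_{i\in J_R}m_i\le 2\val(v_R)-4-\bigl(\val(v_R)-|J_R|\bigr)=\val(v_R)+|J_R|-4=\sum_{i\in J_R}e_i+|J_R|-1,
\]
that is, $\sum_{i\in J_R}e_i\ge \sum_{i\in J_R}m_i-|J_R|+1$, which directly contradicts the hypothesis \eqref{eq:heresthehup} applied to $I=J_R$. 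Hence no such cover exists, and $H_0(\bfx,\bfe)=0$.

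I expect the write-up to be brief; the only care needed is bookkeeping: fixing the orientation convention so that ``all ends point left'' is literally true (the other case being symmetric), the normalization by $\tfrac k2$, and checking that the single displayed inequality also disposes of the degenerate configurations in one stroke — a vertex $v_R$ with no incident ends (where it becomes $0\le -1$, already absurd) and the case where $\Gamma$ is a single vertex (where $J_R=\{1,\dots,n\}$ and the inequality is exactly the $I=\{1,\dots,n\}$ instance of \eqref{eq:heresthehup}, using $|\bfx|=k(n-2)$). The step I regard as the crux is simply realizing that one must test \eqref{eq:heresthehup} at the \emph{rightmost} vertex: there the right degree vanishes, so balancing, the Psi-condition, and integrality of expansion factors collapse into precisely the negation of \eqref{eq:heresthehup} for the set of ends at that vertex. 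As a byproduct, this observation combined with Proposition \ref{lem-multk/2withPsi>0} establishes the sharp dichotomy of Theorem \ref{thm-positivity}: for $\bfx$ consisting of positive multiples of $\tfrac k2$ in genus $0$, a contributing cover exists iff \eqref{eq:condforpos} holds for some $I$ — the ``if'' being Proposition \ref{lem-multk/2withPsi>0}, the ``only if'' being the argument above applied to $J_R$.
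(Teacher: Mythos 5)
Your proposal is correct and follows essentially the same route as the paper's proof: localize at the rightmost vertex $v_R$ (which has no right-pointing flags since all entries of $\bfx$ are positive), and combine the leaky balancing condition, the Psi-condition $\val(v_R)=\sum_{i\in J_R}e_i+3$, and the integrality bound $m(e)\ge 1$ on bounded edges to derive $\sum_{i\in J_R}e_i\ge\sum_{i\in J_R}m_i-|J_R|+1$, contradicting \eqref{eq:heresthehup}. Your explicit handling of the degenerate case $J_R=\emptyset$ and the remark that positivity of multiplicities is not even needed for the vanishing direction are fine but inessential additions to the same argument.
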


\begin{proof}
   { We show that there cannot be a leaky tropical cover of genus $0$ and degree $\bfx$ satisfying the Psi-conditions imposed by $\bfe$ in this case. Assume there is such a cover. Since $\bfx$ has only positive entries, the cover has no right ends. Thus, there cannot be an edge leaving the rightmost vertex  $v_R$ to the right.
    Denote by $I$ the subset of ends that attach directly to $v_R$, and say that an additional $s$ edges (which are not ends) attach to $v_R$, with weights $w_j\cdot \frac{k}{2}$ for $j = 1, \ldots s$, see Figure \ref{fig:lastvertex}. Denoting by $r = |I|$, the  leaky balancing condition at $v_R$ is:
    \begin{equation}
      \label{eq:leakybalvR}  
      \sum_{i\in I}m_i + \sum_{j=1}^s w_j = 2(r+s)-4.
    \end{equation}
 The cover satisfying the Psi-conditions at $v_R$ implies
 \begin{equation}
     \label{eq:psiconsisgoodatvR}
     \sum_{i\in I} e_i = r+s-3.
 \end{equation}
We may subtract \eqref{eq:psiconsisgoodatvR}
from \eqref{eq:leakybalvR}, and, using the fact that  $w_j\geq 1$ for all $j$, obtain the inequality:
\begin{equation}\label{eq:almostthere}
    \sum_{i\in I}m_i+s - \sum_{i\in I}e_i \leq r+s  -1.
\end{equation}
   It is immediate to see that \eqref{eq:almostthere} contradicts the hypothesis \eqref{eq:heresthehup}. Hence no cover can exist and $H_0(\mathbf{x}, \mathbf{e}) = 0$.
    }
\end{proof}

\begin{proof}[Proof of Theorem \ref{thm-positivity}]
By Remark \ref{rem-k>0}, we can assume $k>0$ without restriction.
{The Theorem collects together the statements of Corollary \ref{cor-notmultk/2notzero},  Proposition \ref{lem-multk/2withPsi>0} and Proposition \ref{prop-multk/2is0}.}
\end{proof}

\bibliographystyle{alpha}
\bibliography{main}
\end{document}